\numberwithin{equation}{section}
\theoremstyle{plain}
\newtheorem{proposition}{Proposition}[section]
\newtheorem{theorem}[proposition]{Theorem}		
\newtheorem{corollary}[proposition]{Corollary}
\newtheorem{lemma}[proposition]{Lemma}
\theoremstyle{definition}
\newtheorem{remark}[proposition]{Remark}
\newcommand{\CBbb}{\mathbb C}
\newcommand{\RBbb}{\mathbb R}
\newcommand{\ZBbb}{\mathbb Z}
\newcommand{\HBbb}{\mathbb H}
\newcommand{\BBbb}{\mathbb B}
\newcommand{\YMH}{\mathop{\rm YMH}\nolimits}
\newcommand{\Sym}{S}
\newcommand{\Hom}{\mathop{\rm Hom}\nolimits}
\newcommand{\R}{{\mathfrak R}}
\newcommand{\X}{{\mathfrak X}}
\newcommand{\Hi}{{\mathcal H}}
\newcommand{\Aut}{\mathop{\rm Aut}\nolimits}
\newcommand{\Map}{\mathop{\rm Map}\nolimits}
\newcommand{\hol}{\mathop{\rm hol}\nolimits}
\newcommand{\ad}{\mathop{\rm ad}\nolimits}
\newcommand{\tr}{\mathop{\rm Tr}\nolimits}
\newcommand{\dbar}{\bar\partial}
\newcommand{\lra}{\longrightarrow}
\newcommand{\G}{\mathcal G}
\newcommand{\B}{\mathcal B}
\newcommand{\A}{\mathcal A}
\newcommand{\LL}{\mathcal L}
\newcommand{\M}{{\mathfrak M}}
\newcommand{\N}{{\mathfrak N}}
\newcommand{\mcg}{{\rm Mod}}
\newcommand{\Tor}{{\mathcal I}}
\newcommand{\PTor}{{\mathcal P}{\mathcal I}}
\newcommand{\SL}{\mathsf{SL}}
\newcommand{\PSL}{\mathsf{PSL}}
\newcommand{\GL}{\mathsf{GL}}
\newcommand{\SU}{\mathsf{SU}}
\newcommand{\PU}{\mathsf{PU}}
\newcommand{\U}{\mathsf{U}}
\newcommand{\Sp}{\mathsf{Sp}}
\newcommand{\Diff}{\mathsf{Diff}}
\newcommand{\doubleslash}{\bigr/ \negthinspace\negthinspace \bigr/}
\begin{document}

% Topmatter

\title[$\SL(2,\CBbb)$ character varieties of surface groups]
	{Cohomology of  $\SL(2,\CBbb)$ character varieties of surface groups and the action of the Torelli group}

\author[Daskalopoulos]{Georgios D. Daskalopoulos}

\address{Department of Mathematics \\
		Brown University \\
		Providence,  RI  02912}

\thanks{G.D. supported in part by NSF grant DMS-0604930}

\email{daskal@math.brown.edu}

\author[Wentworth]{Richard A. Wentworth}

%\address{Department of Mathematics \\
%   Johns Hopkins University \\
%   Baltimore, MD 21218}
   
%   \email{wentworth@jhu.edu}

\address{Department of Mathematics \\
   University of Maryland \\
   College Park, MD 20742}
 
\email{raw@umd.edu}

\author[Wilkin]{Graeme Wilkin}

\address{Department of Mathematics \\
University of Colorado at Boulder \\
Boulder, CO 80309}

\email{graeme.wilkin@colorado.edu}

\thanks{R.W. supported in part by NSF grant DMS-0805797}

%\subjclass{Primary: 57M50 ; Secondary: 58E20, 53C24}
\date{\today}

\begin{abstract} We determine the action of the Torelli group  on the equivariant cohomology of the space of flat $\SL(2,\CBbb)$ connections on a closed Riemann surface.  We show that the trivial part of the action contains the equivariant cohomology of the even component of the space of flat $\PSL(2,\CBbb)$ connections.  The non-trivial part  consists of the even alternating products of degree two Prym representations, so that the kernel of the action is precisely the Prym-Torelli group.  We compute the Betti numbers of the ordinary cohomology of the moduli space of flat $\SL(2,\CBbb)$ connections. Using results of Cappell-Lee-Miller we show that the Prym-Torelli group, which acts trivially on equivariant cohomology,  acts non-trivially on ordinary cohomology.
\end{abstract}

%$\hbox{}$
%\centerline{\fbox{\bf *** Preliminary Version ***}}
%\vskip .75in

% End Topmatter

\maketitle

\thispagestyle{empty}

%\newpage

\baselineskip=16pt
\setcounter{footnote}{0}

%%%%%%%%%%%%%%%%%%%%%%%%%%%%%%%%%%%%%%%%%%%%%%%%

\section{Introduction}

The Torelli group acts trivially on the equivariant cohomology of the space of flat unitary connections  on a Riemann surface.  
This follows from the fact that the inclusion of the subset of flat connections into the space of all unitary connections induces a surjection on equivariant cohomology  (see \cite{AB, D, R} and Theorem \ref{T:eq2} below). 
 The latter result may be viewed as an infinite dimensional analogue of a general theorem on   symplectic quotients that has become known as \emph{Kirwan surjectivity} (see  \cite{Ki}). 
%  Note that the mapping class group, and conjecturally also the Torelli group, acts ergodically with respect to the natural measure on the representation varieties of surface groups into compact Lie groups \cite{G,PX}.
 The moduli space of flat $\SL(2,\CBbb)$ connections  has a  gauge theoretic construction due to 
Hitchin (see \cite{H}).  A recent result \cite{DWWW} shows that Kirwan surjectivity actually 
 fails in this case.  In this paper, we show that this failure is detected by the action 
of the Torelli group.
 
To state the results more precisely, recall the notion of a character variety (see \cite{CS,LM} for background). Let $M$ be a closed oriented surface of genus $g\geq 2$, and let
$p\in M$ be a point which will remain fixed throughout.  We set $\pi=\pi_1(M,p)$. 
Let  $\Hom(\pi,\SL(2,\CBbb))$ denote the set of homomorphisms from $\pi$ to $\SL(2,\CBbb)$. This has the structure of an affine algebraic variety.
Let
\begin{equation*} 
\X_0(\pi) =\Hom(\pi,\SL(2,\CBbb))\doubleslash \SL(2,\CBbb)
\end{equation*}
denote the character variety,
where the double slash indicates the invariant theoretic quotient by   overall conjugation of $\SL(2,\CBbb)$.  Then $\X_0(\pi)$ is an irreducible affine variety of complex dimension $6g-6$.
There is a surjective algebraic quotient map $\Hom(\pi,\SL(2,\CBbb)) \to \X_0(\pi)$,  and this is a geometric quotient on the open set of irreducible (or simple) representations. Points of $\X_0(\pi)$ are in 1-1 correspondence with conjugacy classes of semisimple (or reductive) representations, and every $\SL(2,\CBbb)$ orbit in $\Hom(\pi,\SL(2,\CBbb))$ contains a semisimple representation in its closure.
The mapping class group $\mcg(M)$ is the group of components of orientation preserving diffeomorphisms of $M$.  Since $\mcg(M)$ acts by outer automorphisms of $\pi$, there is a naturally induced action on  $\X_0(\pi)$, and hence also on the cohomology $H^\ast(\X_0(\pi))$.\footnote{Unless otherwise stated, cohomology will always be taken with rational coefficients.}   
Since $\X_0(\pi)$ is a categorical quotient
it is also natural to  consider the $\SL(2,\CBbb)$-equivariant cohomology 
\begin{equation} \label{E:equivariant}
H^\ast_{eq.}(\X_0(\pi)) :=H^\ast_{\SL(2,\CBbb)}(\Hom(\pi,\SL(2,\CBbb)))
\end{equation}
	  With a slight abuse of terminology we will often  refer to $H^\ast_{eq.}(\X_0(\pi))$ as the \emph{equivariant cohomology of} $\X_0(\pi)$.
The action of $\Aut(\pi)$  on $H^\ast_{eq.}(\X_0(\pi))$ in fact descends to an action of $\mcg(M)$ (inner automorphisms act trivially; see Section \ref{S:action}).  

Next, let
 \begin{equation} \label{E:gamma}
 \Gamma_2 =H^1(M, \ZBbb/2)\simeq\Hom(\pi,\{\pm 1\})
 \end{equation}
  Then $\Gamma_2$ acts on $\Hom(\pi,\SL(2,\CBbb))$  by  $(\gamma\rho)(x)=\gamma(x)\rho(x)$.
    This action  commutes with conjugation by $\SL(2,\CBbb)$, and hence it  defines an action on 
    $\X_0(\pi)$ and on the  ordinary and equivariant cohomologies.  We denote the $\Gamma_2$ invariant parts of the cohomology by $H^\ast(\X_0(\pi))^{\Gamma_2}$ and
  $H^\ast_{eq.}(\X_0(\pi))^{\Gamma_2}$.

    The Torelli group $\Tor(M)$ is the subgroup of $\mcg(M)$ that acts trivially on the homology of $M$. 
    In particular,  the action of $\Gamma_2$  commutes with the action of $\Tor(M)$.
     The kernel of $\gamma \in \Gamma_2\simeq\Hom(\pi,\{\pm 1\})$, $\gamma\neq 1$, defines an unramified double cover $M_\gamma\to M$ with involution $\sigma$.  Let $W^+_\gamma$ (resp.\ $W^-_\gamma$) denote the  $2g$ (resp.\ $2g-2$) dimensional   $+1$ (resp.\ $-1$) eigensubspaces  of $H^1( M_\gamma)$ for $\sigma$.  A lift of a diffeomorphism of $M$ representing an element of $\Tor(M)$ that commutes with $\sigma$ may or may not be in the Torelli group of $ M_\gamma$; although it acts trivially on $W^+_\gamma$ it may act non-trivially on $W^-_\gamma$.  Since the two lifts differ by $\sigma$, there is thus defined a representation
\begin{equation} \label{E:prym}
\Pi_\gamma : \Tor(M)\lra \Sp(W^-_\gamma, \ZBbb)\bigr/\{\pm I\}
\end{equation}
which is called the (degree 2) \emph{Prym representation} of $\Tor(M)$ associated to $\gamma$.  An element in $\ker \Pi_\gamma$ has a lift which lies in $\Tor( M_\gamma)$.
By a  theorem of Looijenga \cite{Lo}, the image of $\Pi_\gamma$ has finite index for $g>2$.  Note that the representations for various $\gamma\neq 1$ are isomorphic via outer automorphisms of $\Tor(M)$.
 $\Pi_\gamma$ induces non-trivial representations of $\Tor(M)$ on the exterior products
\begin{equation} \label{E:altprym}
V(q,\gamma)=\Lambda^q W^-_\gamma
\end{equation}
when $q$ is even.
Finally, we define the (degree 2) \emph{Prym-Torelli group}
\begin{equation} \label{E:prymtorelli}
\PTor(M)=\bigcap_{1\neq \gamma\in \Gamma_2} \ker \Pi_\gamma
\end{equation}

With this background, we may summarize the first result of this paper  as follows:

\begin{theorem} \label{T:main}
 \begin{enumerate}
 %\item $\PTor(M)$ acts trivially  on  $H^\ast_{eq.}(\Rep(\pi, \SL(2,\CBbb)))$.
 \item $\Tor(M)$ acts trivially on $H^\ast_{eq.}(\X_0(\pi))^{\Gamma_2}$.
 \item For $q\in S=\{2j\}_{j=1}^{g-2}$  the action of $\Tor(M)$ splits as
 $$
 H^{6g-6-q}_{eq.}(\X_0(\pi))=H^{6g-6-q}_{eq.}(\X_0(\pi))^{\Gamma_2}\oplus \bigoplus_{1\neq\gamma\in \Gamma_2} V(q,\gamma)
$$
  In particular, $\PTor(M)$ acts trivially  and $\Tor(M)$ acts non-trivially  on  $H^\ast_{eq.}(\X_0(\pi))$ for $g>2$. The splitting of the sum of $V(q,\gamma)$'s is canonically determined by a choice of homology basis of $M$.
 \item $\Tor(M)$ acts trivially on $ H^{6g-6-q}_{eq.}(\X_0(\pi))$ for $q\not\in S$.
\end{enumerate}
\end{theorem}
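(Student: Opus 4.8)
The plan is to reduce the statement to a computation in equivariant cohomology using the infinite-dimensional gauge-theoretic model, and then to identify the $\Tor(M)$-action on the relevant pieces via Morse theory for the Yang–Mills–Higgs functional. First I would fix a Kähler structure on $M$ so that $\Hom(\pi,\SL(2,\CBbb))$ is identified (via the nonabelian Hodge / Corlette–Donaldson theorem) with the space of $\SL(2,\CBbb)$ Higgs bundles, and hence $H^\ast_{eq.}(\X_0(\pi))$ is computed by the $\GC$-equivariant cohomology of the space of Higgs pairs. The advantage is that this space carries a $\mcg(M)$-equivariant stratification by Harder–Narasimhan/critical type (the Morse stratification of $\YMH$), whose top stratum is the semistable locus and whose lower strata are built from symmetric products and Jacobians of $M$ and of the double covers $M_\gamma$. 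Since the stratification is invariant under the full mapping class group, the Torelli action on each stratum's equivariant cohomology is controlled by the Torelli action on the cohomology of these auxiliary abelian varieties, which is elementary to write down.

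The key steps, in order: (1) Recall from \cite{DWWW} (and the setup here) the precise $\GC$-equivariant Morse stratification of the space of $\SL(2,\CBbb)$ Higgs pairs, together with the Thom–Gysin long exact sequences relating $H^\ast_{eq.}(\X_0(\pi))$ to the equivariant cohomology of the unstable strata. (2) Observe that the classifying-space contributions and the semistable-stratum contributions all factor through $H^\ast(M;\QBbb)$-type data, on which $\Tor(M)$ acts trivially by definition; this handles the "trivial" summand and in particular accounts for $H^\ast_{eq.}(\X_0(\pi))^{\Gamma_2}$ containing the even $\PSL(2,\CBbb)$ piece. (3) Identify the non-trivial contributions: the unstable strata of a given type contribute, through a Prym variety $W^-_\gamma$ attached to the $2$-torsion point $\gamma$ determining the relevant line-bundle square root, exactly the exterior powers $\Lambda^q W^-_\gamma = V(q,\gamma)$, and these appear only in degrees $6g-6-q$ with $q$ even, $1 \le q \le g-2$ — i.e.\ $q \in S$. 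The degree bookkeeping here comes from the codimension of the strata and the dimension count $\dim_{\CBbb} W^-_\gamma = 2g-2$ truncated by the Lefschetz-type vanishing that forces $q \le g-2$. (4) For $q \notin S$ (in particular $q$ odd, or $q > g-2$), check that every stratum's contribution in degree $6g-6-q$ is of the trivial type, giving part (3).

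The main obstacle I expect is step (3): pinning down precisely which exterior powers $\Lambda^q W^-_\gamma$ survive in the Thom–Gysin sequences and in which cohomological degree, i.e.\ proving the vanishing for $q > g-2$ and the absence of odd-$q$ Prym contributions. This requires a careful analysis of the maps in the long exact sequences — showing that the relevant connecting homomorphisms are injective or zero on the Prym-type summands — rather than just an Euler-characteristic or Poincaré-polynomial count, since the latter cannot by itself distinguish the trivial from the non-trivial $\Tor(M)$-isotypic parts. I would handle this by using that the Prym representation $\Pi_\gamma$ is, by Looijenga's theorem \cite{Lo}, large (finite-index image in $\Sp(W^-_\gamma,\ZBbb)/\{\pm I\}$), so that $\Lambda^q W^-_\gamma$ is a non-trivial irreducible-up-to-isotypic $\Tor(M)$-module precisely when $q$ is even; combined with the fact that the only other modules appearing are trivial, the isotypic decomposition then forces the claimed splitting, and a dimension count (which one \emph{can} do on Poincaré polynomials, comparing with the known $\PSL(2,\CBbb)$ and $\SL(2,\CBbb)$ Betti numbers) closes the argument for which degrees are purely trivial, yielding part (3).
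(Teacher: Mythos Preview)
Your broad framework---Morse stratification of the Higgs moduli space and tracking the Torelli action through the Thom--Gysin sequences---matches the paper's, but the execution misses the paper's organizing principle and in a couple of places is simply wrong. First, your step~(2) cannot work as stated: the ``semistable stratum'' \emph{is} $\B^{ss}_0(2,0)$, so its equivariant cohomology is the very group $H^\ast_{eq.}(\X_0(\pi))$ you are trying to compute; it does not reduce to ``$H^\ast(M)$-type data.'' If you instead mean the image of the Kirwan map $k_{\G_0}$, then the difficulty is precisely that $k_{\G_0}$ is \emph{not} surjective in the fixed-determinant case (this failure is the whole point of the theorem), so you cannot conclude triviality of the action on $H^\ast_{eq.}(\X_0(\pi))^{\Gamma_2}$ this way. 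The paper instead proves part~(1) by passing to the non-fixed-determinant space $\B^{ss}(2,0)$, where Kirwan surjectivity \emph{does} hold, and then using the tensor decomposition $H^\ast_{\G}(\B^{ss}(2,0))\simeq H^\ast_{\G_0}(\B^{ss}_0(2,0))^{\Gamma_2}\otimes H^\ast(J_0(M))\otimes H^\ast(B\U(1))$ to deduce triviality on the $\Gamma_2$-invariants.

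Second, the Prym pieces do not arise from a ``line-bundle square root determining a single $\gamma$'' attached to each stratum; rather, the relative cohomology at the $d$-th stratum is (a shift of) $H^\ast(\widetilde S^{2g-2-2d}M)$, the cohomology of the $\Gamma_2$-cover of a symmetric product, and this decomposes over \emph{all} $\gamma\in\Gamma_2$ simultaneously, with the $\gamma\neq 1$ isotypes concentrated in a single degree and equal to $\Lambda^{2g-2-2d}W^-_\gamma$. The paper's entire argument is organized by this $\Gamma_2$-isotypic splitting (which commutes with $\Tor(M)$): one shows that on each non-trivial isotype the long exact sequence \eqref{E:xdb} propagates $V(q,\gamma)$ unchanged down to the semistable stratum (using that $\Gamma_2$ acts trivially on $\ker\beta^p$, Lemma~\ref{L:beta}), while on the invariant part the sequence is governed by Kirwan surjectivity. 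Your alternative---invoke Looijenga to argue $V(q,\gamma)$ is irreducible, then separate by $\Tor(M)$-isotype and close with a Poincar\'e-polynomial count---is circular: you need to already know that only trivial and $V(q,\gamma)$ modules occur before an isotypic argument can bite, and that is exactly what the $\Gamma_2$-decomposition delivers. Bring $\Gamma_2$ to the front of your argument and parts~(1)--(3) fall out without any appeal to representation theory of $\Tor(M)$ beyond the definition of $\Pi_\gamma$.
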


The proof of Theorem \ref{T:main} uses the singular version of infinite dimensional Morse theory developed in \cite{DWWW} to build the equivariant cohomology from a Morse-Bott type stratification. 
We will view $H^\ast_{eq.}(\X_0(\pi))$ via gauge theory as follows.
Let $\B^{ss}_0(2,0)$ denote the space of semistable Higgs bundles of rank 2 with fixed trivial determinant on $M$, let $\G_0$ denote the group of special unitary gauge transformations, and let $\G_0^\CBbb$ be its complexification.  By the results of  Hitchin, Corlette, Donaldson, and Simpson (see \cite{H,Cor,Do, Si}), we have an identification of real analytic spaces 
 $$ \X_0(\pi)\simeq \M_0(2,0):= \B^{ss}_0(2,0)\doubleslash\G_0^\CBbb $$
Combining this with recent work of Wilkin \cite{Wi} we will prove the following 
% (see Theorem \ref{T:eq2}).
 
 \begin{theorem} \label{T:equivariant}
$ H^\ast_{eq.}(\X_0(\pi))\simeq H^\ast_{\G_0}(\B^{ss}_0(2,0))$.
 \end{theorem}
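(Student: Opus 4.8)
The plan is to identify the two equivariant cohomologies by passing through the infinite-dimensional symplectic/Morse-theoretic picture and exploiting the fact that in both settings the semistable stratum is an equivariant deformation retract built from a common Morse stratification. Concretely, $\Hom(\pi,\SL(2,\CBbb))$ carries an action of $\SL(2,\CBbb)$ by conjugation, and $\B^{ss}_0(2,0)$ carries an action of $\G_0$; the content of Theorem~\ref{T:equivariant} is that the Borel constructions $\Hom(\pi,\SL(2,\CBbb))_{h\SL(2,\CBbb)}$ and $(\B^{ss}_0(2,0))_{h\G_0}$ have the same rational cohomology. The real-analytic identification $\X_0(\pi)\simeq\M_0(2,0)$ coming from Hitchin--Corlette--Donaldson--Simpson is the starting point, but it is only an identification of the \emph{quotients}, so one must be careful to lift it to an equivalence of the relevant equivariant objects rather than merely the coarse spaces.

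First I would reduce the left-hand side to a more tractable gauge-theoretic model. The standard dictionary (as in the unitary case, \cite{AB,D}) says that $H^\ast_{\SL(2,\CBbb)}(\Hom(\pi,\SL(2,\CBbb)))$ is isomorphic to the $\G_0^{\CBbb}$-equivariant cohomology of the space of flat $\SL(2,\CBbb)$ connections, and, since $\G_0$ is a maximal compact subgroup of $\G_0^{\CBbb}$ and the inclusion is a homotopy equivalence, to the $\G_0$-equivariant cohomology of that same space. Thus the claim becomes: the $\G_0$-equivariant cohomology of the space of flat $\SL(2,\CBbb)$ connections agrees with $H^\ast_{\G_0}(\B^{ss}_0(2,0))$. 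Next I would invoke the main structural input from \cite{Wi}: the space $\B^{ss}_0(2,0)$ of semistable Higgs bundles $\G_0$-equivariantly deformation retracts onto the minimum of the Yang--Mills--Higgs functional, which is exactly the space of solutions of Hitchin's self-duality equations, i.e.\ (after the Corlette/Donaldson theorem) the space of flat $\SL(2,\CBbb)$ connections. This retraction is $\G_0$-equivariant, so it induces an isomorphism on $\G_0$-equivariant cohomology, and composing with the identification of the previous step yields the theorem.

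The main obstacle, and the step requiring genuine care, is the second one: the equivariant deformation retraction of $\B^{ss}_0(2,0)$ onto the self-duality solutions. The subtlety is that the Yang--Mills--Higgs flow on the singular space $\B^{ss}_0(2,0)$ does not converge in the naive sense at semistable-but-not-stable points, which is precisely why the \emph{singular} Morse theory of \cite{DWWW} and the analytic results of \cite{Wi} are needed rather than the classical Kirwan/Atiyah--Bott machinery. One must check that the retraction produced there is continuous, $\G_0$-equivariant, and has image exactly the minimal stratum, and that the minimal stratum is identified (via Hitchin--Donaldson) with the flat connections in a way compatible with the gauge-group actions on both sides (including matching $\G_0$ with the appropriate real form of the complex gauge group acting on flat connections). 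A secondary but routine point is bookkeeping the passage between $\SL(2,\CBbb)$-equivariant cohomology of the representation variety and $\G_0^{\CBbb}$- (hence $\G_0$-) equivariant cohomology of flat connections: one uses that based gauge transformations act freely and that $\SL(2,\CBbb)$ is the residual conjugation action, together with the homotopy equivalence $\G_0\hookrightarrow\G_0^{\CBbb}$, so that no information is lost.

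Putting these together: the chain of isomorphisms is
$$
H^\ast_{eq.}(\X_0(\pi)) = H^\ast_{\SL(2,\CBbb)}(\Hom(\pi,\SL(2,\CBbb))) \simeq H^\ast_{\G_0^{\CBbb}}(\mathcal{A}^{\mathrm{flat}}) \simeq H^\ast_{\G_0}(\mathcal{A}^{\mathrm{flat}}) \simeq H^\ast_{\G_0}(\B^{ss}_0(2,0)),
$$
where $\mathcal{A}^{\mathrm{flat}}$ denotes the space of flat $\SL(2,\CBbb)$ connections (equivalently Hitchin's self-duality solutions), the first equality is the definition \eqref{E:equivariant}, the second is the gauge-theoretic Hitchin--Corlette--Donaldson--Simpson dictionary, the third is the maximal-compact homotopy equivalence, and the last is the $\G_0$-equivariant deformation retraction of \cite{Wi}. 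I would present the argument in exactly this order, devoting the bulk of the write-up to justifying the last isomorphism and the compatibility of actions, and treating the first three as standard.
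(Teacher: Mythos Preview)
Your chain of isomorphisms contains a genuine gap at the step where you identify ``the space of flat $\SL(2,\CBbb)$ connections'' with ``Hitchin's self-duality solutions.'' These are \emph{not} the same space: in the paper's notation the flat connections form $(T^\ast\A_0)^{flat}=\{(A,\Psi): D=A+\Psi\text{ is flat}\}$, whereas the self-duality solutions are the strictly smaller subset $\B^H_0=\mu_1^{-1}(0)\cap\mu_2^{-1}(0)\cap\mu_3^{-1}(0)=\mu_3^{-1}(0)\cap(T^\ast\A_0)^{flat}$. The Corlette--Donaldson theorem tells you only that the \emph{quotients} agree (every reductive flat connection is complex-gauge equivalent to a solution of Hitchin's equations), not that the total spaces before quotienting are $\G_0$-equivariantly homotopy equivalent. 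Wilkin's result retracts $\B^{ss}_0(2,0)$ onto $\B^H_0$, but you still need that the inclusion $\B^H_0\hookrightarrow(T^\ast\A_0)^{flat}$ --- equivalently, after dividing by based gauge and taking holonomy, $\Hi(M)\hookrightarrow\Hom(\pi,\SL(2,\CBbb))$ --- is an $\SU(2)$-equivariant deformation retract.

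This missing step is exactly the content of the paper's Proposition~\ref{P:retract}, and it is not a citation but a proof. The retraction is built from the harmonic map flow toward a harmonic metric, and one must show that the flow converges, that the limit is uniquely determined by the initial representation (for non-semisimple $\rho$ the limit is its semisimplification), and --- most delicately --- that the resulting map $r:\Hom(\pi,\SL(2,\CBbb))\to\Hi(M)$ is \emph{continuous}. Continuity is subtle precisely at reducible representations, where the conjugating elements $h_t$ along the flow diverge; the paper handles this with an explicit matrix argument. Your proposal locates all the difficulty on the Yang--Mills--Higgs side and treats the representation-variety side as ``standard,'' but the situation is symmetric: two separate retractions onto $\B^H_0$ are required, one from $\B^{ss}_0(2,0)$ (this is \cite{Wi}) and one from $(T^\ast\A_0)^{flat}$ (this is Proposition~\ref{P:retract}), and the second is where the new work in the paper lies.
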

 
 Roughly speaking, this result expresses the fact that both sides compute the equivariant cohomology of a hyperk\"ahler quotient (see Section \ref{S:higgs}). As a consequence, we have from \cite[Theorem 3.2]{DWWW} the following 
 \begin{corollary}   The Poincar\'e polynomial for the $\SL(2,\CBbb)$-equivariant cohomology  is
 $$
 P_t^{\SL(2,\CBbb)}(\Hom(\pi,\SL(2,\CBbb)))= P_t^{\G_0}(\A_0^{ss}(2,0))+C(t,g)
 $$
 where
$$ P_t^{\G_0}(\A_0(2,0))=\frac{(1+t^3)^{2g}-t^{2g+2}(1+t)^{2g}}{(1-t^2)(1-t^4)}$$
 and
 \begin{align}
 C(t,g)&=
-t^{4g-4}+\frac{t^{2g+2}(1+t)^{2g}}{(1-t^2)(1-t^4)}+\frac{(1-t)^{2g}t^{4g-4}}{4(1+t^2)} \label{E:c} \\
&\qquad+ \frac{(1+t)^{2g}t^{4g-4}}{2(1-t^2)}\left(\frac{2g}{t+1}+\frac{1}{t^2-1}-\frac{1}{2} +(3-2g)\right) \notag \\
&\qquad+(1/2)(2^{2g}-1)t^{4g-2}\left( (1+t)^{2g-2}+(1-t)^{2g-2}-2 \right)\notag
 \end{align}
 \end{corollary}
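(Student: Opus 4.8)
The identity follows by combining Theorem~\ref{T:equivariant} with the equivariant Morse-theory computation of \cite[Theorem~3.2]{DWWW}; the proof is essentially an assembly, and I will outline the two steps while isolating the genuine difficulty.

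First I would invoke Theorem~\ref{T:equivariant}: it identifies the left-hand side $P_t^{\SL(2,\CBbb)}(\Hom(\pi,\SL(2,\CBbb)))$ with $P_t^{\G_0}(\B^{ss}_0(2,0))$, the $\G_0$-equivariant Poincar\'e series of the minimal (semistable) stratum for the Yang--Mills--Higgs functional on the space of rank~$2$, trivial-determinant Higgs pairs on $M$. The problem is thereby reduced to evaluating this single Poincar\'e series.

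Then I would feed this into the gauge-theoretic Morse stratification of \cite{DWWW}. The functional $\YMH$ stratifies the space of all Higgs pairs into $\G_0$-invariant pieces: an open dense minimal stratum, which is $\B^{ss}_0(2,0)$, and unstable strata labelled by the slope type of the Harder--Narasimhan filtration, i.e.\ essentially by the degree $d\geq 1$ of a destabilizing $\Phi$-invariant sub-line-bundle. From this one extracts the building block $P_t^{\G_0}(\A_0(2,0))$ displayed in the statement; the equivariant cohomology of each unstable stratum is computed from the Jacobian of $M$ together with the reductive stabilizer of a point on it, and the associated normal cones supply explicit degree shifts $t^{2\lambda_d}$. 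Summing the resulting geometric series over $d\geq 1$ and assembling the Thom--Gysin sequences yields $P_t^{\G_0}(\B^{ss}_0(2,0))$; the discrepancy between this series and the term $P_t^{\G_0}(\A_0^{ss}(2,0))$, together with the additional contributions of the strictly semistable and reducible (in particular central) representations --- which is where the combinatorial factor $2^{2g}-1$ (the number of nontrivial $\gamma\in\Gamma_2$) and the $\SU(2)$-stabilizer denominator $4(1+t^2)$ enter --- packages into $C(t,g)$.

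The serious point, and the reason this is not a routine Atiyah--Bott-style count, is that the $\YMH$-stratification is \emph{not} equivariantly perfect over $\QBbb$: Kirwan surjectivity fails for $\SL(2,\CBbb)$, which is precisely the phenomenon the paper studies. Consequently the relevant Thom--Gysin long exact sequences do not all split, and one must determine by hand exactly which classes supported on the unstable and strictly-semistable strata survive in $H^\ast_{\G_0}(\B^{ss}_0(2,0))$ and which are annihilated. That analysis is carried out in \cite[Theorem~3.2]{DWWW}; granting it, the corollary is immediate from the first step upon substituting the stated expressions for $P_t^{\G_0}(\A_0(2,0))$ and $C(t,g)$.
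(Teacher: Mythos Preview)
Your proposal is correct and follows exactly the paper's own argument: the paper states the corollary as an immediate consequence of Theorem~\ref{T:equivariant} combined with \cite[Theorem~3.2]{DWWW}, and your two steps are precisely that identification followed by that citation. Your additional exposition of the content of \cite{DWWW} (the stratification, the failure of equivariant perfection, and the resulting correction term $C(t,g)$) is accurate and goes beyond what the paper records, but the logical structure is identical.
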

\noindent In the statement above, $\A_0^{ss}(2,0)$ is the space of semistable rank 2 bundles with fixed trivial determinant, and the computation of the Poincar\'e polynomial for its $\G_0$-equivariant cohomology is in \cite{AB}.
 
 Returning to the identification in Theorem \ref{T:equivariant} and the action of the Torelli group, note that
 diffeomorphisms that do not preserve the complex structure of $M$  do not act in any natural way on $\B^{ss}_0(2,0)$.  However, 
by the contractibility of the  Teichm\"uller space of $M$ there is nevertheless a canonical  action of $\mcg(M)$ on the $\G_0$-equivariant cohomology $H^\ast_{\G_0}(\B^{ss}_0(2,0))$, and this corresponds via Theorem \ref{T:equivariant} to the action on $H^\ast_{eq.}(\X_0(\pi))$  described above (see Section \ref{S:action}).
The $\Gamma_2$ action on $\B^{ss}_0(2,0)$  given  by tensoring with $2$-torsion line bundles commutes with $\G_0^\CBbb$, and hence defines an action on $\M_0(2,0)$ and on the $\G_0$-equivariant cohomology $H^\ast_{\G_0}(\B^{ss}_0(2,0))$ of $\B^{ss}_0(2,0)$.   
 The proof of Theorem \ref{T:main} proceeds by analyzing the splitting of the corresponding long exact sequences in Morse theory over the action by $\Gamma_2$ and using the fact that this splitting is preserved by $\Tor(M)$.

The non-singular moduli space $\M_0(2,1)$ of stable Higgs bundles with a fixed determinant of  degree $1$ introduced in \cite{H} corresponds to representations of a central extension of $\pi$, and below we state the analogue of Theorem \ref{T:main} (see Theorem \ref{T:main2}).
In this case, the result essentially follows from \cite{H}, where Hitchin computed the cohomology of  $\M_0(2,1)$   using the existence of a circle action.  The perfection of the Morse-Bott function associated to the circle action follows from a result of Frankel.  

 We observe that Hitchin's method for computing the ordinary cohomology of the odd degree moduli space works as well for $\M_0(2,0)$, where the moduli space is singular.  Let $\N_0(2,k)$ denote the moduli space of semistable bundles on $M$ of rank $2$ and fixed determinant of degree $k$, and let
 \begin{equation*} 
 \R_0(\pi)=\Hom(\pi, \SU(2))/\SU(2)
 \end{equation*}
   By the result of Narasimhan-Seshadri \cite{NS} there is a real analytic equivalence 
 $
 \N_0(2,0)\simeq \R_0(\pi) $.
 We will prove 
 \begin{theorem}  \label{T:hitchin}
 The circle action on the singular variety  $\M_0(2,0)$ gives rise to a perfect Morse-Bott stratification whose minimum stratum retracts onto  $\N_0(2,0)$.  In particular, the natural inclusions 
 $\R_0(\pi)\hookrightarrow \X_0(\pi)$ and  $\R_0^{irr.}(\pi)\hookrightarrow \X_0^{irr.}(\pi)$
 induce  surjections on rational cohomology.
 \end{theorem}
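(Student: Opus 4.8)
The plan is to transport Hitchin's computation for the smooth moduli space $\M_0(2,1)$ (see \cite{H}) to the singular space $\M_0(2,0)$, using the singular Morse theory of \cite{DWWW}. One begins with the circle action on $\M_0(2,0)$ given by $e^{i\theta}\cdot(\dbar_E,\Phi)=(\dbar_E,e^{i\theta}\Phi)$; this makes sense because each point of $\M_0(2,0)$ is represented, uniquely up to unitary gauge, by a solution of Hitchin's equations, and $\|\Phi\|_{L^2}^2$ is gauge invariant. The function $f(\dbar_E,\Phi)=\|\Phi\|_{L^2}^2$ is the moment map for this action and, exactly as in \cite{H}, it is proper and bounded below on $\M_0(2,0)$. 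Hence $f$ attains its minimum on $f^{-1}(0)=\{\Phi=0\}$, which is the image of the semistable locus, i.e.\ $\N_0(2,0)$; by Narasimhan-Seshadri \cite{NS} this is $\R_0(\pi)$.

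The first step is to identify the critical sets of $f$, equivalently the fixed-point loci of the circle action. Apart from the minimum, these are the classes of Higgs bundles $E=L\oplus L^{-1}$ with $\Phi$ strictly lower triangular, determined by a nonzero section $\phi\in H^0(M,L^{-2}K)$, and $\Phi$-stability forces $1\le\deg L\le g-1$; denote the critical set with $\deg L=d$ by $F_d$. The structural observation that lets Hitchin's method run in spite of the singularities is twofold: first, every such critical point is a \emph{stable} Higgs bundle, so each $F_d$ and indeed its entire descending manifold lies in the smooth locus of $\M_0(2,0)$; second, the singular locus of $\M_0(2,0)$, which is the image of the strictly semistable objects, is contained in the minimum stratum, since a strictly semistable Higgs bundle has semistable underlying bundle and therefore flows to $\N_0(2,0)$ under $\Phi\mapsto t\Phi$, $t\to 0$. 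One then computes the Morse index of each $F_d$ as twice the complex dimension of a weight subspace of the circle action on the deformation complex, and obtains, as in \cite{H}, that all these indices are even.

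Next one invokes the singular Morse theory of \cite{DWWW}: the downward gradient flow of $f$ converges and yields a Morse-Bott stratification of $\M_0(2,0)$ indexed by the critical sets, whose open minimum stratum is the locus where the underlying bundle is semistable and which deformation retracts onto $\N_0(2,0)$ along $\Phi\mapsto t\Phi$, $t\to 0$. Perfection of this stratification then follows by combining the argument of Frankel for Hamiltonian circle actions on K\"ahler manifolds — adapted to the noncompact setting via the properness and real-analyticity of $f$, as in \cite{H} — with the evenness of the indices; the relevant long exact sequences involve only the smooth higher critical sets $F_d$, so the singularity of the minimum stratum does no harm at this stage. One concludes that the Morse-Bott inequalities are equalities, $P_t(\M_0(2,0))=\sum_c t^{\lambda_c}P_t(C_c)$.

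Finally, the surjectivity assertions follow formally from perfection. At each step of the Morse-theoretic filtration of $\M_0(2,0)$ — obtained by adjoining the descending manifolds of the $F_d$ to an open neighborhood of the minimum — perfection is equivalent to surjectivity of the restriction map on rational cohomology; composing these and using that the minimum has index zero gives that $H^\ast(\M_0(2,0))\to H^\ast(\N_0(2,0))$ is surjective, which under the identifications $\M_0(2,0)\simeq\X_0(\pi)$ and $\N_0(2,0)\simeq\R_0(\pi)$ is the first surjection in the theorem. The second is obtained by running the same argument over the smooth locus $\X_0^{irr.}(\pi)$, which already contains all the higher strata, together with the corresponding statement for the moduli of bundles. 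The main obstacle throughout is the third step: one must verify carefully that the circle-action Morse function genuinely fits the framework of \cite{DWWW} — convergence of the downward flow, local normal form for the strata, and the structure of the minimum stratum near the singular points of $\M_0(2,0)$ — and that Frankel's perfection argument survives in this stratified, noncompact setting. Granting that, the remaining input is the known geometry of the Hitchin moduli space, and the argument is essentially bookkeeping.
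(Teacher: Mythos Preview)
Your outline has the right shape --- circle action, identify critical sets, note the higher ones lie in the smooth locus, prove perfection, deduce surjectivity --- but two of your key technical inputs are misplaced, and the paper's actual argument is both different and simpler.

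First, you invoke ``the singular Morse theory of \cite{DWWW}'' for convergence of the downward flow of $f=\|\Phi\|^2$ on $\M_0(2,0)$. That reference does not do this: \cite{DWWW} concerns the YMH functional on the infinite-dimensional space $\B_0(2,k)$, not the moment map on the finite-dimensional quotient. The paper sidesteps any flow analysis entirely. It defines the stratification algebraically by the Harder--Narasimhan type of the \emph{underlying holomorphic bundle}: $U_d=\{(\dbar_E,\Phi):\dbar_E$ has HN type $(d,-d)\}$. The retraction of $U_d$ onto the critical set $C_d$ is then the explicit one-line map $\Psi_t(\dbar_E,\Phi)=(\dbar_E,e^{-t}\Phi)$, checked by hand to land in $C_d$ as $t\to\infty$ (for $d\ge 1$ one conjugates by a diagonal gauge transformation to see this). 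No convergence theorem, no local normal form near the singular set, no analysis --- the points you flagged as ``the main obstacle'' simply do not arise.

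Second, your perfection argument appeals to ``Frankel's argument \ldots\ with the evenness of the indices.'' Evenness of indices alone does not give perfection here, since the critical sets $C_d\cong\widetilde S^{2g-2-2d}M$ have odd cohomology. The paper instead runs the Atiyah--Bott/Kirwan mechanism: it first proves \emph{$S^1$-equivariant} perfection, using that the $S^1$-equivariant Euler class of the normal bundle to $U_d$ (which exists because $U_d$ sits in the smooth locus) is not a zero-divisor, and then passes to ordinary cohomology by an explicit inductive diagram chase comparing the equivariant and ordinary long exact sequences of the pairs $(\cup_{d\le d_0}U_d,\cup_{d<d_0}U_d)$. The surjectivity of the forgetful map $H^*_{S^1}\to H^*$ on the relative groups (Thom isomorphism plus the $t=0$ projection of $H^*(C_d)\otimes H^*(BS^1)$) is what drives the induction. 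This is the genuine analogue of Frankel's theorem in the singular, noncompact setting, and it is not a black box --- it is the actual content of the proof.
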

 
 Here and throughout, the superscript $irr.$ stands for irreducible representations.
 A consequence of this result is a computation of the Betti numbers of $\X_0(\pi)$.  
  The Poincar\'e polynomial of $\R_0(\pi)$ was computed in \cite[Thm.\ 2.2]{CLM}.
\begin{align}
P_t(\R_0^{irr.}(\pi))
&=P_t^{\G_0}(\A_0(2,0))- \frac{ (1+t)^{2g}(1+t^2)+(1-t)^{2g}(1-t^2) }{ 2(1-t^4) } \label{E:psuirr} \\
&\qquad+\sum_{k=2}^g \left\{ {2g\choose k}-{2g\choose k-2}  \right\}
t^{2k- \epsilon(2,k)}\frac{ (1-t^{k+\epsilon(2,k)})(1-t^{2g-2k+2})}{(1-t)(1-t^4)}\notag 
\end{align}
\begin{equation} \label{E:psu}
P_t(\R_0(\pi))=P_t(\R_0^{irr.}(\pi))   -(1/2)t((1+t)^{2g}+(1-t)^{2g})
+\frac{ (1-t^{2g+2})}{(1-t)}
\end{equation}
where  $\epsilon(2,k)$ is $0$ or $1$, depending on whether $k$ is even or odd, respectively. Using Theorem \ref{T:hitchin} and  adding contributions from the other strata we obtain the following
\begin{theorem} \label{T:topology}
The  Poincar\'e polynomials of $\X_0(\pi)$ and $\X_0^{irr.}(\pi)$ are
\begin{itemize}
\item $P_t(\X_0(\pi))= P_t(\R_0(\pi)) + C(t,g) $.
\item $P_t(\X_0^{irr.}(\pi))= P_t(\R_0^{irr.}(\pi)) + C(t,g) $.
\end{itemize}
\end{theorem}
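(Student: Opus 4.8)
The plan is to transplant Hitchin's $S^1$-localization computation \cite{H} to the singular space $\M_0(2,0)\simeq\X_0(\pi)$, using Theorem \ref{T:hitchin} in place of the appeal to Frankel's theorem to supply perfection. The circle acts by $(A,\Phi)\mapsto(A,e^{i\theta}\Phi)$ with proper moment map $f=\|\Phi\|^2$, and by Theorem \ref{T:hitchin} the associated Morse--Bott stratification of $\M_0(2,0)$ is perfect over $\QBbb$, so that
\begin{equation*}
P_t(\X_0(\pi))=\sum_{\beta}t^{\lambda_\beta}\,P_t(C_\beta),
\end{equation*}
the sum running over the connected components $C_\beta$ of the $S^1$-fixed locus and $\lambda_\beta$ the rank of the negative normal bundle of $C_\beta$, read off from the negative-weight part of the Hessian. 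The first step is to peel off the minimum $C_0=\{\Phi=0\}=\N_0(2,0)$: by Theorem \ref{T:hitchin} the minimum stratum retracts onto it, it carries index $\lambda_0=0$, and by Narasimhan--Seshadri \cite{NS} $\N_0(2,0)\simeq\R_0(\pi)$, so its contribution is exactly $P_t(\R_0(\pi))$, with Poincar\'e polynomial \eqref{E:psu}.

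The remaining task is to enumerate and evaluate the higher critical sets. Any $S^1$-fixed Higgs bundle with $\Phi\neq0$ underlies a rank-two, trivial-determinant variation of Hodge structure, so $E=L\oplus L^{-1}$ with $\Phi$ the nilpotent Higgs field determined by a nonzero section $\varphi\in H^0(M,L^{-2}K)\cong\Hom(L,L^{-1}\otimes K)$; semistability forces $0\le\deg L\le g-1$, and $\deg L\ge1$ already forces stability, so these are smooth points of $\M_0(2,0)$. For $1\le d=\deg L\le g-1$ the component $F_d$ is identified, by sending $\varphi$ to its divisor, with the $(\ZBbb/2)^{2g}$-Galois cover of $\Sym^{2g-2-2d}(M)$ classifying square roots $L$ of $K(-\mathrm{div}\,\varphi)$, whence
\begin{equation*}
P_t(F_d)=\sum_{\chi\in\Gamma_2}P_t\bigl(\Sym^{2g-2-2d}(M);\LL_\chi\bigr),
\end{equation*}
where $\LL_\chi$ is the rank-one local system pulled back from $\chi\in\Gamma_2=H^1(M,\ZBbb/2)$. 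The term $\chi=1$ is the ordinary cohomology of $\Sym^{2g-2-2d}(M)$ given by Macdonald's formula, while for $\chi\neq1$ the twisted cohomology is computed from the cohomology of the Prym variety of the double cover $M_\chi\to M$; these are the origin of the Prym factors $(1+t)^{2g-2}$ and $(1-t)^{2g-2}$, the multiplicity $2^{2g}-1$, and the index shift $t^{4g-2}$ appearing in \eqref{E:c}. The index $\lambda_d=2\mu_d$ is obtained by applying Riemann--Roch to the $\CBbb^\ast$-equivariant deformation complex $[\,\End_0E\xrightarrow{[\Phi,\,\cdot\,]}\End_0E\otimes K\,]$ at a point of $F_d$, exactly as in \cite[\S7]{H}.

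Assembling these contributions gives $P_t(\X_0(\pi))=P_t(\R_0(\pi))+\sum_{d=1}^{g-1}t^{2\mu_d}P_t(F_d)$, and it remains to verify the bookkeeping identity $\sum_{d=1}^{g-1}t^{2\mu_d}P_t(F_d)=C(t,g)$ for $C(t,g)$ as in \eqref{E:c}; that this correction agrees with the one coming from the Yang--Mills--Higgs stratification of $\B^{ss}_0(2,0)$, where by the Corollary to Theorem \ref{T:equivariant} the same $C(t,g)$ is the difference $P_t^{\SL(2,\CBbb)}(\Hom(\pi,\SL(2,\CBbb)))-P_t^{\G_0}(\A_0^{ss}(2,0))$, is both expected (the non-minimal critical sets of the two Morse theories are indexed by the same variations of Hodge structure) and a useful check against \cite[Theorem 3.2]{DWWW}. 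Finally, for $\X_0^{irr.}(\pi)=\M_0(2,0)^s$ one runs the same argument on this open subset: every non-minimal stratum already lies in $\M_0(2,0)^s$, the reducible locus is contained in the minimum stratum, and restricting the (still perfect) stratification and invoking the surjectivity of $H^\ast(\X_0^{irr.}(\pi))\to H^\ast(\R_0^{irr.}(\pi))$ from Theorem \ref{T:hitchin} replaces only the minimum's contribution $P_t(\R_0(\pi))$ by $P_t(\R_0^{irr.}(\pi))$, leaving $C(t,g)$ unchanged. I expect the main obstacle to be this identity carried out in the singular category: one must check that the $S^1$-stratification of the singular space $\M_0(2,0)$ is formally as well behaved as in Hitchin's smooth case, in particular near the orbifold points $L\oplus L$ with $L\in\Gamma_2$, evaluate the twisted (Prym) cohomology of the symmetric products correctly, pin down each Morse index $\mu_d$, and verify that the resulting rational expressions sum term-by-term to \eqref{E:c}, including the cancellations hidden in its last two lines.
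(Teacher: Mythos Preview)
Your proposal is correct and follows essentially the same approach as the paper: use the perfection of the $S^1$-stratification (Theorem~\ref{T:hitchin}, itself a consequence of Theorem~\ref{T:frankel}) to write $P_t(\M_0(2,0))=P_t(\N_0(2,0))+\sum_{d=1}^{g-1}t^{2\mu_d}P_t(\widetilde S^{2g-2-2d}M)$, then identify the sum with $C(t,g)$ (the paper simply cites \cite{DWWW} for this bookkeeping identity), and repeat verbatim on $\M_0^s(2,0)$. Your residual worry about the singular locus is handled by Proposition~\ref{P:singset}: all singularities of $\M_0(2,0)$ lie in the open minimum stratum $C_0$, so the higher critical sets sit in the smooth locus and the Thom--Gysin/Euler-class argument goes through there unchanged.
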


In \cite{CLM}, Cappell, Lee, and Miller also showed that the Torelli group acts non-trivially on the ordinary cohomology of $\R_0(\pi)$.  Using this  and 
the second statement  of Theorem \ref{T:hitchin}, we find  the following result, which stands in  contrast to that of Theorem \ref{T:main}.

\begin{corollary} \label{C:nontrivial}  
For $g>3$,
$\PTor(M)$ acts non-trivially on the ordinary cohomology $H^\ast(\X_0(\pi))$ and $H^\ast(\X_0^{irr.}(\pi))$.
\end{corollary}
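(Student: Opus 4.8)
The plan is to transport the non-triviality of the $\Tor(M)$-action across the surjections of Theorem \ref{T:hitchin} and then to refine it to the subgroup $\PTor(M)$ using the $\Tor(M)$-module structure of $H^\ast(\R_0(\pi))$ coming from \cite{CLM}. First, the inclusions $\R_0(\pi)\hookrightarrow\X_0(\pi)$ and $\R_0^{irr.}(\pi)\hookrightarrow\X_0^{irr.}(\pi)$ are $\mcg(M)$-equivariant: the spaces appearing on both sides, and their irreducible loci, are built functorially from $\pi$ together with the fixed inclusion $\SU(2)\hookrightarrow\SL(2,\CBbb)$, so the action of $\Aut(\pi)$ — which descends to $\mcg(M)$, and in particular restricts to $\PTor(M)$ — commutes with the inclusions. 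By Theorem \ref{T:hitchin} the induced maps $\iota^\ast\colon H^\ast(\X_0(\pi))\to H^\ast(\R_0(\pi))$ and $H^\ast(\X_0^{irr.}(\pi))\to H^\ast(\R_0^{irr.}(\pi))$ are surjective. Since a surjection of $G$-modules with non-trivial target must have non-trivial source — if $G$ fixed every class upstairs it would fix every class downstairs — the corollary reduces to the assertion that $\PTor(M)$ acts non-trivially on $H^\ast(\R_0(\pi))$ and on $H^\ast(\R_0^{irr.}(\pi))$.

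For this I would invoke \cite{CLM}. There Cappell--Lee--Miller prove that $\Tor(M)$ acts non-trivially on the ordinary cohomology of $\R_0(\pi)$ (and of $\R_0^{irr.}(\pi)$), and the input to be extracted from their work is an explicit summand, or class, carrying this non-trivial action. What then needs to be checked is that such a summand is \emph{not} among the representations assembled from the $W^{\pm}_\gamma$ by exterior and tensor operations: $\Tor(M)$ acts trivially on each $W^+_\gamma\cong H^1(M)$, and by the definition \eqref{E:prymtorelli} of $\PTor(M)$ this group acts trivially on each $W^-_\gamma$, so it would act trivially on any such summand — this is exactly the equivariant picture of Theorem \ref{T:main}. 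The ordinary cohomology of $\R_0(\pi)$, however, also contains classes outside the image of the Kirwan map (the failure of Kirwan surjectivity of \cite{DWWW}), and it is among these that one expects, and must confirm from the computation in \cite{CLM}, that $\PTor(M)$ moves a class. A natural mechanism to look for: an element of $\PTor(M)\subseteq\ker\Pi_\gamma$ lifts to $\Tor(M_\gamma)$ but in general not to $\PTor(M_\gamma)$, hence can act non-trivially on the higher (iterated) Prym summands of $H^\ast(\R_0(\pi_1 M_\gamma))$, which in turn contribute to $H^\ast(\R_0(\pi))$; the genus hypothesis $g>3$ is the range in which this portion of \cite{CLM} is effective.

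The formal half of the argument — equivariance of the two inclusions and propagation of non-triviality through a surjection — is routine. The main obstacle is the second step: one must unwind the computation of \cite{CLM} far enough to exhibit a concrete summand of $H^\ast(\R_0(\pi))$, surviving restriction to the smooth locus $\R_0^{irr.}(\pi)$, on which $\PTor(M)$ — not merely $\Tor(M)$ — acts non-trivially. This is precisely where the contrast with Theorem \ref{T:main} is sharpest: $\PTor(M)$ is invisible on $H^\ast_{eq.}(\X_0(\pi))$, yet it is detected, via \cite{CLM} and the surjection of Theorem \ref{T:hitchin}, on the additional ordinary cohomology that Kirwan surjectivity fails to produce.
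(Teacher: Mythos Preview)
Your reduction to $\R_0(\pi)$ and $\R_0^{irr.}(\pi)$ via the $\mcg(M)$-equivariant surjections of Theorem~\ref{T:hitchin} is correct and is exactly the paper's first move. The gap is in the second step, which you correctly flag as the ``main obstacle'' but leave unresolved; your proposed mechanism (iterated Pryms, summands outside the Kirwan image) is not the one the paper uses.

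The paper does not locate a specific summand of $H^\ast(\R_0(\pi))$ outside the span of the $\Lambda^\ast W^\pm_\gamma$. Instead it extracts from \cite[Sect.~6]{CLM} that the non-trivial $\Tor(M,D)$-action on $H^\ast(\R_0(\pi))$ is detected by a construction factoring through $H_1(\Tor(M,D),\ZBbb)$; hence it suffices to show that $H_1(\PTor(M,D),\QBbb)\to H_1(\Tor(M,D),\QBbb)$ is surjective. By Looijenga \cite[Thm.~2.5]{Lo} there is an exact sequence $1\to\PTor(M,D)\to\Tor(M,D)\to L\to 1$ with $L$ of finite index in $\prod_{\gamma\neq 1}\Sp(W_\gamma^-,\ZBbb)$. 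For $g>3$ one has $\dim W_\gamma^-=2g-2\geq 6$, and Matsushima's vanishing theorem gives $\Hom(L,\ZBbb)=0$; dually, $H^1(\Tor(M,D),\ZBbb)\to H^1(\PTor(M,D),\ZBbb)$ is injective, which yields the required surjection on $H_1$. This is precisely where the hypothesis $g>3$ enters --- not, as you suggest, as a range in which iterated Prym contributions become available.

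A side remark: your appeal to ``classes outside the image of the Kirwan map'' and to \cite{DWWW} is misplaced here. The failure of Kirwan surjectivity in \cite{DWWW} concerns the \emph{equivariant} cohomology of the Higgs space $\B^{ss}_0(2,0)$, whereas the argument at hand lives entirely on the unitary side, in the \emph{ordinary} cohomology of $\R_0(\pi)$, for which no Kirwan map is in play.
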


The  action of $\Gamma_2$ on the cohomology of the moduli of space of vector bundles has been an important theme in the subject. The triviality of the action on $H^\ast(\N_0(2,1))$ 
  was first proved  in \cite[Thm.\ 1]{HN} by number theoretic methods.  It was reinterpreted by Atiyah-Bott in \cite{AB} where it is also shown that $\Gamma_2$ acts trivially on equivariant cohomology $H^\ast_{\G_0}(\A^{ss}(2,0))$ \cite[Sects. 2 and 9]{AB}.  The \emph{non-triviality} of the action of $\Gamma_2$ on $H^\ast(\M_0(2,1))$ was observed by Hitchin \cite{H} and it was further exploited in \cite{HT}.  The non-triviality of 
the action of $\Gamma_2$ on $H^\ast_{\G_0}(\B^{ss}(2,0))$ was discussed in \cite{DWWW} in connection with the failure of Kirwan surjectivity.  It follows from Theorem \ref{T:hitchin} that $\Gamma_2$ acts non-trivially on the ordinary cohomology $H^\ast(\M_0(2,0))$ as well.  
We will also prove  the following version of the result of Harder-Narasimhan for the singular moduli space (see Section \ref{S:proofs}).

\begin{theorem}  \label{T:gamma}
The action of $\Gamma_2$ on ordinary cohomology $H^\ast(\N_0(2,0))$ is trivial.
\end{theorem}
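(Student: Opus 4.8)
The plan is to reduce the statement to the corresponding fact for the smooth odd-degree moduli space $\N_0(2,1)$, where triviality of the $\Gamma_2$-action on ordinary cohomology is the theorem of Harder-Narasimhan \cite{HN} (reproved by Atiyah-Bott \cite{AB}). The bridge between the two is Theorem \ref{T:hitchin}: the inclusion $\R_0(\pi)\hookrightarrow\X_0(\pi)$, i.e.\ $\N_0(2,0)\hookrightarrow\M_0(2,0)$, is surjective on rational cohomology, and this surjection is $\Gamma_2$-equivariant because the circle action, its Morse-Bott stratification, and the retraction of the minimum stratum onto $\N_0(2,0)$ are all compatible with tensoring by $2$-torsion line bundles. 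Hence it suffices to show that $\Gamma_2$ acts trivially on $H^\ast(\M_0(2,0))$; equivalently, by Theorem \ref{T:hitchin} applied the other way, one may work directly with $\N_0(2,0)$ but now exploit the gauge-theoretic description $\N_0(2,0)\simeq\A_0^{ss}(2,0)\doubleslash\G_0^\CBbb$.

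First I would set up the equivariant picture. By \cite{AB}, $\Gamma_2$ acts trivially on the $\G_0$-equivariant cohomology $H^\ast_{\G_0}(\A_0^{ss}(2,0))$: this is the ``easy'' half, since the equivariant cohomology of the semistable stratum is built from the (rationally formal, $\Gamma_2$-invariant) equivariant cohomology of the whole space of connections minus the higher Harder-Narasimhan strata, all of which carry the trivial $\Gamma_2$-action. The subtlety is that passing from equivariant to ordinary cohomology of $\N_0(2,0)$ is not a formal matter because the action of $\G_0$ on $\A_0^{ss}(2,0)$ is not free — there are reducible (split) semistable bundles with extra automorphisms. So the second step is to compare $H^\ast(\N_0(2,0))$ with $H^\ast_{\G_0}(\A_0^{ss}(2,0))$ along the stabilizer stratification: on the stable locus the quotient is an honest manifold (orbifold) and the comparison is clean, while the reducibles $L\oplus L^{-1}$ form a locus parametrized (up to $L\leftrightarrow L^{-1}$) by $\Jac(M)$, whose contribution must be accounted for separately. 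The $\Gamma_2$-action permutes these strata: tensoring $L\oplus L^{-1}$ by a $2$-torsion bundle $\gamma$ gives $(L\gamma)\oplus(L\gamma)^{-1}$, so $\Gamma_2$ acts on the reducible locus through its translation action on $\Jac(M)$, which is \emph{homotopically trivial} (translations on a torus are homotopic to the identity). Thus each stratum individually has trivial $\Gamma_2$-action on cohomology.

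The third step is to assemble these pieces $\Gamma_2$-equivariantly. I would use the long exact sequence (or the Morse-theoretic/stratification spectral sequence) relating $H^\ast(\N_0(2,0))$, $H^\ast(\N_0^{irr.}(2,0))$, and the cohomology of a neighborhood of the reducible locus; every term carries the trivial $\Gamma_2$-representation by the previous step, and the connecting maps are $\Gamma_2$-equivariant, so if the sequence degenerates rationally — which is exactly the content of the perfection statements underlying the Poincar\'e polynomial formula \eqref{E:psu} — then $H^\ast(\N_0(2,0))$ is an extension of trivial $\Gamma_2$-modules and hence (over $\QBbb$, where $\Gamma_2$ is finite so its representation category is semisimple) is itself trivial. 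The main obstacle I anticipate is precisely the behavior at the reducible locus: one must check that the identification of the local structure there (a cone on a lens-space-type link, coming from the $\Jac(M)\times(\text{weighted quotient})$ normal data) is $\Gamma_2$-equivariant and that no $\Gamma_2$-action hides in the gluing of the stable part to this singular locus. Once that local $\Gamma_2$-equivariance is pinned down, the semisimplicity of $\QBbb[\Gamma_2]$-modules makes the rest formal.
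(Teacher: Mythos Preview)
Your first paragraph contains a real error: the reduction ``it suffices to show that $\Gamma_2$ acts trivially on $H^\ast(\M_0(2,0))$'' fails because that statement is false. As noted in the Introduction, $\Gamma_2$ acts \emph{non}-trivially on $H^\ast(\M_0(2,0))$ --- the higher critical sets $C_d\simeq\widetilde S^{2g-2-2d}M$ in Theorem \ref{T:hitchin} carry precisely the nontrivial Prym isotypical pieces. The $\Gamma_2$-equivariant surjection $H^\ast(\M_0(2,0))\to H^\ast(\N_0(2,0))$ only transfers triviality forward, and the source is not $\Gamma_2$-trivial; nor is there any bridge here to $\N_0(2,1)$.

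The stratification strategy of your second and third paragraphs has the right shape but a genuine gap at the stable locus. You want the identification $H^\ast_{\G_0}(\A_0^{s})\cong H^\ast(\N_0^{irr.})$ (true rationally) together with Atiyah--Bott triviality of $\Gamma_2$ on $H^\ast_{\G_0}(\A_0^{ss})$ to give triviality on $H^\ast(\N_0^{irr.})$. But the restriction $H^\ast_{\G_0}(\A_0^{ss})\to H^\ast_{\G_0}(\A_0^{s})$ is \emph{not} surjective: the Torelli group acts trivially on the source (Theorem \ref{T:eq2} and \cite{AB}) yet nontrivially on the target \cite{CLM}, so new classes appear over the stable locus and you have no a priori $\Gamma_2$-control on them. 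Your long exact sequence then has an unknown term and the semisimplicity argument cannot close. The paper avoids this by working instead with the Cappell--Lee--Miller homotopy-quotient pair $(Z_2,Z_1)$, where $Z_2\simeq B\G_0$ carries the trivial $\Gamma_2$-action by \cite{AB}, and $Z_1$ is the homotopy quotient of abelian connections by $N(\U(1))$; a separate fibration/monodromy argument (Lemma \ref{L:y1b}) shows $\Gamma_2$ acts trivially on $H^\ast(Z_1)$, hence on $H^\ast(Z_2,Z_1)$, and one then invokes the mechanism of \cite[Prop.\ 3.2]{CLM} to reach $H^\ast(\N_0(2,0))$.
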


Finally, we consider the corresponding representation varieties for $\PU(2)
$ and $ \PSL(2,\CBbb)$.
Via the action of $\Gamma_2$ on the  moduli spaces $\M_0(2,0)$ and $\M_0(2,1)$ we have the following  identifications.  Let
\begin{equation*} 
\widehat \X(\pi)=\Hom(\pi,  \PSL(2,\CBbb))\doubleslash  \PSL(2,\CBbb)
\end{equation*}
Then $\widehat \X(\pi)= \widehat \X_e(\pi) \cup \widehat \X_o(\pi)$, where
\begin{equation}
 \widehat \X_e(\pi)\simeq \M_0(2,0)\bigr/\Gamma_2\ ,\ \widehat \X_o(\pi) \simeq \M_0(2,1)\bigr/\Gamma_2 \label{E:even}
\end{equation}
and the union is disjoint.
 The even component   $ \widehat X_e(\pi)$ consists of representations that lift to  $\SL(2,\CBbb)$, and  the odd component $ \widehat X_o(\pi) $ consists of representations that do not lift.  A similiar description holds for $\PU(2)$ representations: 
\begin{align} 
\widehat \R(\pi)&=\Hom(\pi,  \PU(2))\bigr/ \PU(2)= \widehat \R_e(\pi) \cup \widehat \R_o(\pi)\notag\\
 \widehat \R_e(\pi)&\simeq \N_0(2,0)\bigr/\Gamma_2\ ,\ \widehat \R_o(\pi) \simeq \N_0(2,1)\bigr/\Gamma_2\label{E:even2}
\end{align}
By considering the  $\Gamma_2$-invariant cohomology of $\M_0(2,0)$ and $\M_0(2,1)$ we deduce the following result for the action of Torelli on the space of projective representations.

\begin{corollary} \label{C:proj}
The Torelli group $\Tor(M)$ acts trivially on the  cohomology of  $\widehat \R_o(\pi)$ and 
 $ \widehat \X_o(\pi) $.  For $g>3$,  $\Tor(M)$  acts non-trivially  on the cohomology of $\widehat \R_e(\pi)$ and  $ \widehat \X_e(\pi)$.   It also acts non-trivially on the subspaces of irreducible representations.
\end{corollary}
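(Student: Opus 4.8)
The plan is to reduce every assertion to facts about the cohomology of $\M_0(2,k)$ and $\N_0(2,k)$ that are already in hand. For a finite group $G$ acting on a paracompact Hausdorff space $Y$ the transfer gives $H^\ast(Y/G;\QBbb)\cong H^\ast(Y;\QBbb)^G$, so the identifications \eqref{E:even}, \eqref{E:even2} and their analogues on the loci of irreducible representations yield
\[
H^\ast(\widehat \X_e(\pi))\cong H^\ast(\M_0(2,0))^{\Gamma_2},\qquad H^\ast(\widehat \R_e(\pi))\cong H^\ast(\N_0(2,0))^{\Gamma_2},
\]
and likewise $H^\ast(\widehat \X_o(\pi))\cong H^\ast(\M_0(2,1))^{\Gamma_2}$ and $H^\ast(\widehat \R_o(\pi))\cong H^\ast(\N_0(2,1))^{\Gamma_2}$. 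Since the $\Gamma_2$-action commutes with the $\mcg(M)$-action on each of these moduli spaces, and the identifications above are $\mcg(M)$-equivariant (both actions descending from the action on $\pi$), $\Tor(M)$ acts on each $\Gamma_2$-invariant subspace, and the Corollary becomes a statement about those actions.

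\emph{The odd components.} By Theorem \ref{T:main2}, the degree-one analogue of Theorem \ref{T:main}, $\Tor(M)$ acts trivially on $H^\ast(\M_0(2,1))^{\Gamma_2}$ (for the smooth odd-degree moduli space rational equivariant and ordinary cohomology agree), which settles $\widehat \X_o(\pi)$ and its irreducible locus. For $\widehat \R_o(\pi)$ the theorem of Harder--Narasimhan \cite[Thm.\ 1]{HN} gives that $\Gamma_2$ acts trivially on $H^\ast(\N_0(2,1))$, so $H^\ast(\widehat \R_o(\pi))\cong H^\ast(\N_0(2,1))$; and $\Tor(M)$ acts trivially on the latter because $H^\ast(\N_0(2,1))$ is generated by the classes $\alpha\in H^2$, $\beta\in H^4$, $\psi_i\in H^3$ of \cite{AB}, on which $\mcg(M)$ acts only through its action on $H_1(M;\ZBbb)$.

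\emph{The even components.} The main point is that the non-trivial Torelli action \emph{survives} passage to $\Gamma_2$-invariants. The inclusions $\R_0(\pi)\hookrightarrow \X_0(\pi)$ and $\R_0^{irr.}(\pi)\hookrightarrow \X_0^{irr.}(\pi)$ are simultaneously $\Gamma_2$- and $\mcg(M)$-equivariant and induce surjections on rational cohomology by Theorem \ref{T:hitchin}. The restriction of a surjective equivariant map of $\QBbb[\Gamma_2]$-modules to $\Gamma_2$-invariants is again surjective (average over $\Gamma_2$; $|\Gamma_2|=2^{2g}$ is invertible in $\QBbb$), so we obtain $\Tor(M)$-equivariant surjections $H^\ast(\X_0(\pi))^{\Gamma_2}\twoheadrightarrow H^\ast(\R_0(\pi))^{\Gamma_2}$ and $H^\ast(\X_0^{irr.}(\pi))^{\Gamma_2}\twoheadrightarrow H^\ast(\R_0^{irr.}(\pi))^{\Gamma_2}$. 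By Theorem \ref{T:gamma} and its counterpart for the irreducible locus, the right-hand sides are all of $H^\ast(\R_0(\pi))$ and $H^\ast(\R_0^{irr.}(\pi))$, on which $\Tor(M)$ --- indeed already $\PTor(M)$, when $g>3$ --- acts non-trivially by \cite{CLM} (this is precisely the input to Corollary \ref{C:nontrivial}). Hence $\Tor(M)$ acts non-trivially on the sources $H^\ast(\X_0(\pi))^{\Gamma_2}\cong H^\ast(\widehat \X_e(\pi))$ and $H^\ast(\X_0^{irr.}(\pi))^{\Gamma_2}\cong H^\ast(\widehat \X_e^{irr.}(\pi))$; and for $\widehat \R_e(\pi)$ and $\widehat \R_e^{irr.}(\pi)$ the conclusion is immediate, since by Theorem \ref{T:gamma} their cohomology is $H^\ast(\R_0(\pi))$ and $H^\ast(\R_0^{irr.}(\pi))$.

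I expect the substance to be bookkeeping rather than new input: one must check that every map used --- the Narasimhan--Seshadri and Hitchin--Donaldson--Corlette--Simpson identifications, the inclusions of $\SU(2)$- into $\SL(2,\CBbb)$-representation varieties, and the Morse-theoretic surjection of Theorem \ref{T:hitchin} --- is \emph{simultaneously} $\Gamma_2$- and $\mcg(M)$-equivariant, so that the averaging step really does detect the non-triviality inside the $\Gamma_2$-invariant summand and not merely the Prym part isolated in Theorem \ref{T:main}. The one ingredient that may require a short separate argument is the analogue of Theorem \ref{T:gamma} on the locus of irreducible representations, i.e.\ triviality of the $\Gamma_2$-action on $H^\ast(\R_0^{irr.}(\pi))$; this should come from \cite{CLM} or from the proof of Theorem \ref{T:gamma} itself.
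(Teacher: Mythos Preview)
Your proof is correct and follows essentially the same route as the paper's: for the odd components you invoke Theorem~\ref{T:main2}(1) and the triviality of the Torelli action on $H^\ast(\N_0(2,1))$ coming from Atiyah--Bott/Harder--Narasimhan, while for the even components you combine Theorem~\ref{T:hitchin} (surjection onto $H^\ast(\R_0(\pi))$), Theorem~\ref{T:gamma} (triviality of the $\Gamma_2$-action there), and \cite{CLM}. The paper packages the middle step as Corollary~\ref{C:gamma}, but the content is identical. One small slip: Theorem~\ref{T:main2} is stated for $\Tor(M,D)$, not $\Tor(M)$; you should note that the surjection $\Tor(M,D)\to\Tor(M)$ carries the action over to $\widehat\X_o(\pi)$, and that the parenthetical about equivariant versus ordinary cohomology is unnecessary since Theorem~\ref{T:main2} is already about ordinary cohomology. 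Your flag about the irreducible locus needing the analogue of Theorem~\ref{T:gamma} is well taken; the paper's proof does not spell this out either.
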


This paper is organized as follows.  In Section \ref{S:higgs} we define the moduli spaces of bundles and Higgs bundles, state the correspondences with representation varieties, and prove the equivalence Theorem \ref{T:equivariant}.  We also discuss the results of \cite{DWWW} on equivariant Morse theory and tie this in with the $\Gamma_2$-action.  We conclude the section with the relationship between the fixed and non-fixed determinant cases.  In Section \ref{S:torelli} we show how to define the action of the Torelli group on equivariant cohomology, and using the results from Section \ref{S:higgs} we prove the main result Theorem \ref{T:main}.  We also discuss the case of odd degree.  Finally, in Section \ref{S:topology}, we prove Theorem \ref{T:hitchin} and deduce the Betti numbers of the $\SL(2,\CBbb)$ character variety. We also use this to prove the assertions of the remaining results stated above.  
Table 1 summarizes  the action of the Torelli group on the rational cohomology and equivariant cohomologies of the representation varieties  for
 $G=\SU(2)$, $\U(2)$, $\PU(2)$, $\SL(2,\CBbb)$, $\GL(2,\CBbb)$, and $\PSL(2,\CBbb)$.
 
 \medskip
\noindent \emph{Acknowledgment.}  The authors thank Bill Goldman and  Dan Margalit  for discussions.

\section{Cohomology of Higgs bundles and character varieties} \label{S:higgs}

%In this section we define all the spaces of interest and review the main result of \cite{DWWW}.  We also study the action of  $\Gamma_2=H^1(M,\ZBbb/2)$ on the cohomology of the moduli spaces of vector and  Higgs bundles of rank 2.

\subsection{Definitions and equivariant cohomology} % Basic Definitions    
As in the Introduction, let $M$ be a compact Riemann surface of genus $g\geq	 2$.
Fix $p\in M$ and let ${\mathcal O}[p]$ denote the holomorphic line bundle with divisor $p$. 
Let $E\to M$ be a complex vector bundle of rank $2$ and degree $k=0,1$ and fixed hermitian metric $H$.    We denote by $\A(2,k)$ (resp.\ $\A^{ss}(2,k)$) the space of hermitian (resp.\ semistable hermitian) connections on $E$, and by 
$\B(2,k)$ (resp.\ $\B^{ss}(2,k)$) the spaces of Higgs bundles (resp.\ semistable Higgs bundles)  on $E$, i.e.\ a holomorphic bundle with a holomorphic $1$-form $\Phi$ with values in the endomorphism bundle of $E$ (the Higgs field).
The spaces $\A_0(2,k)$, $\B_0(2,k)$, $\A_0^{ss}(2,k)$, $\B_0^{ss}(2,k)$ will denote the corresponding subspaces where the induced holomorphic structure on  $\det E$ is fixed to be trivial if $k=0$, and isomorphic to ${\mathcal O}[p]$ if $k=1$, and the Higgs field is traceless. 

Let $\G$ (resp.\ $\G^\CBbb$) denote the group of real (resp.\ complex) gauge transformations acting on the spaces above by precomposition, and $\G_0$ (resp.\ $\G_0^\CBbb$) the corresponding fixed determinant groups.  We use  the following notation for the  moduli spaces of semistable bundles and semistable Higgs bundles.
\begin{align}
\begin{split} \label{E:moduli}
\N(2,k)&=\A^{ss}(2,k)\doubleslash\G^{\CBbb} \\
\N_0(2,k)&=\A_0^{ss}(2,k)\doubleslash\G_0^{\CBbb}\\
\M(2,k)&=\B^{ss}(2,k)\doubleslash\G^{\CBbb}\\
\M_0(2,k)&=\B_0^{ss}(2,k)\doubleslash\G^{\CBbb}_0
\end{split}
\end{align}
where the double slash indicates the identification of $s$-equivalent orbits.
By the results of Narasimhan-Seshadri, Hitchin, Corlette, Donaldson, and Simpson, we have the following identifications of real analytic spaces (see  \cite{NS,H,Cor,Do, Si}).
%\begin{eqnarray*}
%\R(\pi):=\Hom(\pi, \U(2))\bigr/\U(2)\simeq \N(2,0) \qquad &\X(\pi):=\Hom(\pi, \GL(2,\CBbb))\doubleslash\GL(2,\CBbb)\simeq \M(2,0) \\
%\R_0(\pi):=\Hom(\pi, \SU(2))\bigr/\SU(2)\simeq \N_0(2,0) \qquad
%&\X_0(\pi):=\Hom(\pi, \SL(2,\CBbb))\doubleslash\SL(2,\CBbb)\simeq \M_0(2,0) \\
%\end{eqnarray*}
\begin{align}
\R(\pi)&:=\Hom(\pi, \U(2))\bigr/\U(2)\simeq \N(2,0)\notag  \\
\R_0(\pi)&:=\Hom(\pi, \SU(2))\bigr/\SU(2)\simeq \N_0(2,0)\label{E:reps} \\
\X(\pi)&:=\Hom(\pi, \GL(2,\CBbb))\doubleslash\GL(2,\CBbb)\simeq \M(2,0)\notag  \\
\X_0(\pi)&:=\Hom(\pi, \SL(2,\CBbb))\doubleslash\SL(2,\CBbb)\simeq \M_0(2,0) \notag 
\end{align}
where the double slash indicates the identification of orbits of reducibles with orbits of their semisimplifications.
Define the equivariant cohomologies of these spaces as in \eqref{E:equivariant}.
\begin{align} 
\begin{split} \label{E:cohomology}
H^\ast_{eq.}(\R(\pi)) &:=H^\ast_{\U(2)}(\Hom(\pi,\U(2)))   \\
H^\ast_{eq.}(\R_0(\pi)) &:=H^\ast_{\SU(2)}(\Hom(\pi,\SU(2))) \\
H^\ast_{eq.}(\X(\pi)) &:=H^\ast_{\GL(2,\CBbb)}(\Hom(\pi,\GL(2,\CBbb)))  \\
H^\ast_{eq.}(\X_0(\pi)) &:=H^\ast_{\SL(2,\CBbb)}(\Hom(\pi,\SL(2,\CBbb))) 
\end{split}
\end{align}

The construction of $\N(2,k)$ and $\N_0(2,k)$ as infinite dimensional symplectic quotient varieties is well-known (cf.\ \cite{AB,Ko}).  We briefly review the aspects of Hitchin's construction of $\M_0(2,k)$ that will be needed in the sequel (the details for $\M(2,k)$ are similar).  We furthermore focus on the case $k=0$ since that is directly related to representations of $\pi$.
We view the cotangent bundle as follows:
$$
T^\ast\A_0=\left\{ (A,\Psi) : A\in \A_0(2,0)\ ,\ \Psi\in \Omega^1(M,\sqrt{-1}\ad_0 E) \right\}
$$
where $\ad_0(E)$ denotes the bundle  of traceless skew-hermitian endomorphisms of $E$.
According to \cite{H}, $T^\ast\A_0$
 is a hyperk\"ahler manifold, and the action of the gauge group $\G_0$ has associated moment maps
\begin{equation} \label{E:momentmaps}
\mu_1(A,\Psi) = F_A+\tfrac{1}{2}[\Psi,\Psi] \ ,\
\mu_2(A,\Psi)= \sqrt{-1}\, d_A\Psi \ ,\
\mu_3(A,\Psi)= \sqrt{-1}\, d_A(\ast\Psi)
\end{equation} 
 Then $\M_0(2,0)$ is the hyperk\"ahler quotient
$$
\M_0(2,0)=\mu_1^{-1}(0)\cap \mu_2^{-1}(0)\cap \mu_3^{-1}(0) \bigr/ \G_0
$$ 
This is typically regarded as a reduction in steps in two different ways.  The first point of view (e.g.\ Hitchin and Simpson) is
\begin{equation} \label{E:hs}
\M_0(2,0)=\mu_1^{-1}(0)\cap \B_0(2,0) \bigr/ \G_0
\end{equation}
The second point of view (e.g.\ Corlette and Donaldson) is as the quotient 
\begin{equation} \label{E:cd}
\X_0(\pi)=\mu_3^{-1}(0)\cap (T^\ast\A_0)^{flat} \bigr/ \G_0
\end{equation}
where 
$$
(T^\ast\A_0)^{flat}=\left\{ (A,\Psi)\in T^\ast\A_0 : D=A+ \Psi \text{ is a flat $\SL(2,\CBbb)$ connection}\right\}
$$
In Theorem \ref{T:eq2} below, we will show that the two descriptions \eqref{E:hs} and \eqref{E:cd} give rise to the same equivariant cohomology.  

To begin, let 
\begin{equation} \label{E:bh} 
\B^H_0=\mu_1^{-1}(0)\cap \mu_2^{-1}(0)\cap \mu_3^{-1}(0)=\mu_1^{-1}(0)\cap \B_0(2,0)=\mu_3^{-1}(0)\cap (T^\ast\A_0)^{flat}
\end{equation}
denote the space of solutions to the Hitchin equations.  Let $\G_0(p)=\{g\in\G_0 : g(p)=I\}$ denote the gauge group based at the point $p$.  We denote the holonomy map
\begin{equation} \label{E:hol}
\hol_p : (T^\ast\A_0)^{flat}\bigr/\G_0(p)\lra \Hom(\pi, \SL(2,\CBbb))
\end{equation}
Note that $\hol_p$ is $\SU(2)$-equivariant with contractible fibers $\G_0^{\CBbb}(p)\bigr/\G_0(p)$.
Restricted to $\B^H_0\bigr/\G_0(p)$,  $\hol_p$ is a proper embedding.  We denote the image 
\begin{equation*} %\label{E:h}
\Hi(M):= \hol_p\left(\B^H_0\bigr/\G_0(p) \right)\subset \Hom(\pi, \SL(2,\CBbb))
\end{equation*}
where we have included $M$ in the notation to emphasize the dependence of $\Hi(M)$  on the Riemann surface structure.  Also, note that $\Hi(M)$ consists of semisimple representations (cf.\ \cite[Thm.\ 9.13]{H}).

\begin{proposition} \label{P:retract}
The inclusion $
\Hi(M) \hookrightarrow \Hom(\pi, \SL(2,\CBbb)) 
$
is  an $\SU(2)$-equivariant deformation retract.
\end{proposition}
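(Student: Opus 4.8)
The plan is to exhibit an $\SU(2)$-equivariant deformation retraction of $\Hom(\pi,\SL(2,\CBbb))$ onto $\Hi(M)$ by running the Corlette--Donaldson heat flow, viewed suitably as a flow on representations. The starting point is the following reformulation: under $\hol_p$ a representation $\rho$ corresponds to a flat $\SL(2,\CBbb)$ connection $D$, which we decompose with respect to the fixed metric $H$ as $D=d_A+\Psi$, $\Psi\in\Omega^1(M,\sqrt{-1}\ad_0 E)$, and flatness of $D$ is equivalent to the two equations $\mu_1(A,\Psi)=0$ and $\mu_2(A,\Psi)=0$. Hence on the flat locus the only Hitchin equation that can fail is $\mu_3(A,\Psi)=\sqrt{-1}\,d_A(\ast\Psi)=0$, and $\Hi(M)$ is exactly the subset of $\Hom(\pi,\SL(2,\CBbb))$, viewed as flat connections for $H$ modulo $\G_0(p)$, on which $\mu_3$ vanishes; its complement is cut out by $\|\mu_3\|^2>0$. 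Note also that $\Hi(M)$ is closed, since $\hol_p|_{\B^H_0/\G_0(p)}$ is a proper embedding.

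Next I would take $r_t$ to be the downward gradient flow of $\tfrac{1}{2}\|\mu_3\|^2$ along the complex gauge directions --- equivalently the heat flow for the Hermitian metric on the flat bundle determined by $\rho$ --- so that $\rho$ moves within its $\SL(2,\CBbb)$ conjugacy orbit. Since the flow direction is an infinitesimal complex gauge transformation $d_D\xi$ and $d_D(d_D\xi)=[F_D,\xi]=0$ on the flat locus, the flow preserves flatness, hence keeps $\mu_1=\mu_2=0$ while decreasing $\|\mu_3\|^2$; it descends to $\Hom(\pi,\SL(2,\CBbb))$; it commutes with conjugation by $\SU(2)$, because $\mu_3$ is $\SU(2)$-equivariant, $H$ is $\SU(2)$-invariant, and the $L^2$ metric is $\SU(2)$-invariant; and it is stationary on $\Hi(M)$. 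Standard parabolic theory gives continuity of $(t,\rho)\mapsto r_t(\rho)$ for finite $t$, so it remains to show that $r_t(\rho)$ converges as $t\to\infty$ to a point of $\Hi(M)$ depending continuously on $\rho$; a reparametrization of the time interval then produces the desired deformation retraction. (If convenient, one can first use the contractibility of the fibres $\G_0^\CBbb(p)/\G_0(p)$ of $\hol_p$ to replace $\Hom(\pi,\SL(2,\CBbb))$ by $(T^\ast\A_0)^{flat}/\G_0(p)$ up to $\SU(2)$-equivariant homotopy equivalence, where the flow is most naturally a flow of metrics; the content is unchanged.)

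For reductive $\rho$ the convergence is the classical fact that the Corlette--Donaldson flow converges to the essentially unique harmonic metric, so the limit $r_\infty(\rho)$ is the harmonic representative of $\rho$ in its orbit; for non-reductive $\rho$ the flow converges to the harmonic representative of the semisimplification $\rho^{ss}$, which again lies in $\Hi(M)$. I expect the main obstacle to be exactly the behaviour at the non-semisimple locus: there $\rho\mapsto\rho^{ss}$ is discontinuous, so one must show that the motion of $\rho$ along its orbit under the flow exactly absorbs this jump, leaving $r_\infty$ continuous, and also that the flow genuinely converges there rather than only subconverging after rescaling. I would handle this by combining the properness of $\hol_p|_{\B^H_0/\G_0(p)}$ --- which confines the flow to a fixed $\G_0$-invariant neighbourhood of $\B^H_0$ and rules out loss of compactness at infinity --- with uniform, Lojasiewicz-type convergence estimates near the minimal set of the flow; alternatively one can import the analogous statement from the Higgs bundle side through the non-abelian Hodge correspondence, using the result of \cite{Wi} that the Yang--Mills--Higgs gradient flow deformation retracts the semistable stratum onto the space of solutions of Hitchin's equations with the required continuity, the semistable non-polystable Higgs bundles there playing the role of the non-reductive representations here.
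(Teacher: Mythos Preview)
Your overall strategy is the same as the paper's: run the Corlette--Donaldson harmonic map heat flow, which on the representation side moves $\rho$ by conjugation $\rho_t=h_t\rho h_t^{-1}$ with $h_t$ hermitian, and show that $\rho_\infty\in\Hi(M)$ depends continuously on $\rho$. The paper phrases this via equivariant maps $\HBbb^2\to\HBbb^3$ rather than as the gradient flow of $\tfrac12\|\mu_3\|^2$, but these are equivalent. You also correctly locate the only real difficulty, namely convergence and continuity of the limit at non-semisimple $\rho$.

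Where your proposal falls short is in the handling of that difficulty. First, \L{}ojasiewicz-type estimates give convergence of each individual trajectory but do \emph{not} by themselves yield continuity of the endpoint map; in general the limit of a gradient flow is discontinuous in the initial data, and nothing in your sketch rules this out here. The paper instead argues directly: it diagonalises the hermitian $h_t$ in a converging unitary frame, tracks the matrix entries of $\rho_t(\alpha)$ as the eigenvalue ratio $\lambda_t\to\infty$, and shows that the off-diagonal entries are forced to zero, so that the limit is exactly the semisimplification; a parallel matrix argument, together with Hartman's uniqueness theorem for harmonic maps in the bounded case, gives continuity for sequences $\rho_j\to\rho$. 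Second, your alternative of importing Wilkin's retraction from the Higgs side is essentially circular: Wilkin gives $\B^H_0/\G_0(p)\hookrightarrow\B^{ss}_0/\G_0(p)$, and to transport this to $\Hi(M)\hookrightarrow\Hom(\pi,\SL(2,\CBbb))$ one would need a homotopy equivalence between $\B^{ss}_0/\G_0(p)$ and $\Hom(\pi,\SL(2,\CBbb))$ compatible with the common subspace $\B^H_0/\G_0(p)\simeq\Hi(M)$, which is precisely what the present proposition together with Wilkin establishes. So the flat-connection-side analysis cannot be avoided, and the concrete linear-algebra argument the paper gives is what is actually required.
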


\begin{proof}
The proof uses the method in \cite{Cor,Do} adapted to the case of non-irreducible representations.  The idea is to use the harmonic map flow to define a flow on the space of representations.  Convergence was shown in \cite{Cor, Do}, and here we prove that in fact this defines a deformation retract.
Let $\HBbb^2$ and $\HBbb^3$ denote the 2 and 3 dimensional hyperbolic spaces, with $\pi$ acting on $\HBbb^2$ by a Fuchsian representation with quotient $M$.    Fix a lift $\tilde p$ of $p$, and a point $z\in \HBbb^3$ so that $\PU(2)$ is identified with the stabilizer of $z$ in the isometry group $\PSL(2,\CBbb)$ of $\HBbb^3$.
Given $\rho\in \Hom(\pi, \SL(2,\CBbb))$, choose  $D\in (T^\ast\A_0)^{flat}$ with $\hol_p(D)=\rho$. 
The hermitian metric  gives a unique $\rho$-equivariant lift $f:\HBbb^2\to \HBbb^3$ with $f(\tilde p)=z$.  Let $f_t$, $t\geq 0$, denote the harmonic map flow with initial condition $f$.  There is a unique continuous family $h_t\in \SL(2,\CBbb)$, $h_t^\ast=h_t$, such that $h_0=I$, and $ h_tf_t(\tilde p)=z$.  Notice that a different choice of flat connection $\widetilde D$ with $\hol_p(\widetilde D)=\rho$ will be related to $D$ by a based gauge transformation $g$.  The  flow corresponding to $\widetilde D$ is  $\tilde f_t=g\cdot f_t$, and since $g(\tilde p)=I$,  $\tilde h_t=h_t$.  Hence, $h_t$ is well-defined by $\rho$.  The flow we define is $\rho_t=h_t\rho h_t^{-1}$.

Set $\hat f_t=h_tf_t$, and 
notice  that $f_t$ is $\rho_t$-equivariant.  It follows from the Bochner formula of Eells-Sampson \cite{ES}  that the $\hat f_t$ are uniformly Lipschitz.  Hence, there is a subsequence so that
 $f_{t_j}$ converges to a harmonic map $\hat f_\infty : \HBbb^2\to\HBbb^3$ with $\hat f_\infty(\tilde p)=z$.
Moreover, $\hat f_\infty$ is equivariant with respect to some isometric action of $\pi$ on $\HBbb^3$, and this lifts to a homomorphism
$\rho_\infty : \pi\to \SL(2,\CBbb)$, with the algebraic convergence  $\rho_{t_j}\to \rho_\infty$ as $t_j\to \infty$. The harmonicity of $\hat f_\infty$ implies that a flat connection $D_\infty=A_\infty+\Psi_\infty$ with $\hol_p(D_\infty)=\rho_\infty$ satisfies $\mu_3(A_\infty, \Psi_\infty)=0$, and so $\rho_\infty\in \Hi(M)$.  We will show in the next paragraph that the limit $\rho_\infty$ is uniquely determined by $\rho$.
Hence, we have defined a map 
\begin{equation}\label{E:retraction}
r : \Hom(\pi, \SL(2,\CBbb))\lra \Hi(M)\ :\
r(\rho)=\rho_\infty=\lim_{t\to \infty}\rho_t 
\end{equation}

To prove uniqueness of the limit,    suppose $h_j= h_{t_j}$, 
$\rho_j=h_j\rho h_j^{-1}$, $\rho_j\to \sigma$,  is a convergent sequence along the flow. 
Assume first that $\rho$ is not semisimple so that $\rho$ fixes a line $L\subset \CBbb^2$.  Since the representations in $\Hi(M)$ are semisimple the $h_j$ must be unbounded, since otherwise we could extract a convergent subsequence $h_j\to h$ with $\sigma=h\rho h^{-1}$.
 Hence,  there is a sequence of unitary frames $\{v_j,w_j\}$, which we may assume converges, with respect to which 
 $$
 h_j=\left(\begin{matrix} \lambda_j &0 \\ 0&\lambda_j^{-1}\end{matrix}\right)\ , \ \lambda_j\to \infty
 $$
Fix $\alpha\in \pi$, and using the frame  $\{v_j,w_j\}$ write
$$
\rho_j(\alpha)=\left(\begin{matrix} a_j &b_j \\ c_j&d_j\end{matrix}\right)\ , \ 
\rho(\alpha)=\left(\begin{matrix} m_j &n_j \\ p_j&q_j\end{matrix}\right)
$$
Since $\rho_j=h_j\rho h_j^{-1}$ we have
\begin{equation*} %\label{E:matrix1}
\left(\begin{matrix} a_j &\lambda_j^2b_j \\ \lambda_j^{-2}c_j&d_j\end{matrix}\right) =
\left(\begin{matrix} m_j &n_j \\ p_j&q_j\end{matrix}\right)
\end{equation*}
and since $\rho_j$ and the frame converge whereas $\lambda_j\to \infty$, we find  $b_j\to 0$ and $p_j\to 0$.  This is true for every $\alpha\in\pi$.  Since the limit  $\sigma$ is semisimple, it must be the case that $c_j\to 0$  as well. In particular, we conclude that $\sigma$ fixes $L$, and so $\sigma$ is the just the semisimplification of $\rho$.
%Let $\hat h_j=h_{\hat t_j}$ be another such sequence with $\hat\rho_j$ and $\hat \sigma$ similarly defined.  Let $g_j=\hat h_j h_j^{-1}$.  By Hartman's distance decreasing property,
%\begin{align*}
%d_{\HBbb^3}(z, g_jz)&=d_{\HBbb^3}(z, \hat h_j h_j^{-1} z)
%= d_{\HBbb^3}(\hat h_j^{-1} z, h_j^{-1} z) \\
%&= d_{\HBbb^3}(f_{\hat t_j}(\tilde p), f_{ t_j}(\tilde p))\lra 0
%\end{align*}
%Hence, we may assume $g_j\to g\in \SU(2)$. Since $\hat\rho_j=g_j \rho_j g_j^{-1}$, we have $\hat \sigma=g\sigma g^{-1}$.  On the other hand, it follows from the discussion after \eqref{E:matrix1} that $\sigma$ and $\hat\sigma$ both fix the same unitary splitting of $\CBbb^2$,  and so we must have $\hat\sigma=\sigma$.
%Assuming $\sigma(\gamma)$ is not central for some $\gamma\in\pi$, this implies that $g$ fixes $L$ as well.  In particular, since $g$ is unitary it commutes with $\sigma$, and so $\hat\sigma=\sigma$.
If $\rho$ is semisimple there exists a $\rho$-equivariant harmonic map.  While this may or may not be unique, 
using  the result of Hartman \cite{Ha}, we conclude that  the $h_j$ are bounded, and they and the associated maps converge uniquely.

 Next, we claim that the map $r$ is continuous.  Fix $\rho\in \Hom(\pi, \SL(2,\CBbb))$, and let $\rho_j\to \rho$, $\sigma_j=r(\rho_j)$, $\sigma=r(\rho)$.  Without loss of generality, we may assume the $\rho_j$ are irreducible.  Choose smoothly converging  flat connections $D_j\to D$ with $\hol_p(D_j)=\rho_j$ and $\hol_p(D)=\rho$.  Then the associated equivariant maps $f_j\to f$.  In particular, the $f_j$ have uniformly bounded energy.
 We have $\sigma_j$-equivariant harmonic maps $u_j:\HBbb^2\to\HBbb^3$ with $u_j(\tilde p)=z$.  We also have a $\sigma$-equivariant harmonic map $u:\HBbb^2\to\HBbb^3$ with $u(\tilde p)=z$.  Since the $u_j$'s have uniformly bounded energy (less than the $f_j$'s), they form a uniformly Lipschitz family of maps.  Hence, there is a subsequential limit $u_j\to \hat u$, where $\hat u$ is harmonic and equivariant with respect to some $\hat\sigma\in \Hi(M)$ and $\sigma_j\to \hat\sigma$.  We need to show $\hat \sigma=\sigma$.
 For each $\rho_j$, let $\rho_{j,t}$ denote the time $t$ flow with initial condition $\rho_j$. Define $\rho_t$ with initial condition $\rho$ similarly.  By uniqueness of the harmonic map flow, for each fixed $t$, $\rho_{j,t}\to \rho_t$ as $j\to \infty$.  Hence, we may choose a subsequence $\{j_k\}$ such that $\rho_{j_k,k}\to \sigma$.  On the other hand, since $\rho_j$ is irreducible, there exist $h_k$, $h_k^\ast=h_k$, such that $h_k\sigma_{j_k} h_k^{-1}=\rho_{j_k, k}$.
 We now consider two cases.  First, suppose the $h_k$ are bounded.  Then we may assume without loss of generality that $h_k\to h_\infty$ as $k\to \infty$, where $h_\infty$ is hermitian, and
 $h_\infty \hat\sigma h_\infty^{-1}=\sigma$.  Now $h_\infty \hat u$ and $u$ are $\sigma$-equivariant harmonic maps.   By Hartman's uniqueness theorem \cite{Ha}, either  they are  equal, or they both map to a geodesic fixed by the action of  $\sigma$.  In the former case,  $z=u(\tilde p)=h_\infty\hat u(\tilde p)=h_\infty z$, so $h_\infty$ is unitary.  But $h_\infty$ is also hermitian, so $h_\infty=I$.  In the latter case, $\sigma$ and $\hat\sigma$ are reducible.  Assuming $\sigma(\gamma)$ is not central for some $\gamma$, then $h_\infty$ carries  the orthogonal  splitting of $\hat\sigma$ to that of $\sigma$.  Hence, $h_\infty h_\infty^\ast=h_\infty^2$ is diagonal with respect to this splitting.  But then so is $h_\infty$, and hence it commutes with $\hat\sigma$.
  We conclude in either case that $\hat\sigma=\sigma$.
 If the  $h_k$ are unbounded, then  argue the same way as above.
 Namely, there is a sequence of unitary frames $\{v_k,w_k\}$, which we may assume converges, with respect to which 
 $$
 h_k=\left(\begin{matrix} \lambda_k &0 \\ 0&\lambda_k^{-1}\end{matrix}\right)\ , \ \lambda_k\to \infty
 $$
Fix $\alpha\in \pi$, and using the frame  $\{v_k,w_k\}$ write
$$
\sigma_{j_k}(\alpha)=\left(\begin{matrix} a_k &b_k \\ c_k&d_k\end{matrix}\right)\ , \ 
\rho_{j_k,k}(\alpha)=\left(\begin{matrix} m_k &n_k \\ p_k&q_k\end{matrix}\right)
$$
Since $h_k\sigma_{j_k} h_k^{-1}=\rho_{j_k}$ we have
\begin{equation*} %\label{E:matrix}
\left(\begin{matrix} a_k &\lambda_k^2b_k\\ \lambda_k^{-2}c_k&d_k\end{matrix}\right) =
\left(\begin{matrix} m_k &n_k \\ p_k&q_k\end{matrix}\right)
\end{equation*}
and since $\sigma_{j_k}$ and $\rho_{j_k,k}$ converge whereas $\lambda_k\to \infty$, we find  $b_k\to 0$ and $p_k\to 0$.  This is true for every $\alpha\in\pi$.  Since the limits $\hat\sigma$ and $\sigma$ are both semisimple, it must be the case that $c_k\to 0$ and $n_k\to 0$ as well, and hence  $\hat\sigma=\sigma$.
This proves the continuity of $r$.
\end{proof}

 The following contains Theorem \ref{T:equivariant} as one case.

\begin{theorem} \label{T:eq2}
The identifications \eqref{E:reps} induce the following isomorphisms of equivariant cohomologies:
\begin{eqnarray*}
H^\ast_{eq.}(\R_0(\pi))\simeq H^\ast_{\G_0}(\A^{ss}_0(2,0)) \qquad &
H^\ast_{eq.}(\R(\pi))\simeq H^\ast_{\G}(\A^{ss}(2,0)) \\
H^\ast_{eq.}(\X_0(\pi))\simeq H^\ast_{\G_0}(\B^{ss}_0(2,0))\qquad &
H^\ast_{eq.}(\X(\pi))\simeq H^\ast_{\G}(\B^{ss}(2,0)) 
\end{eqnarray*}
\end{theorem}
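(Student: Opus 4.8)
The plan is to establish the fixed-determinant Higgs bundle isomorphism $H^\ast_{eq.}(\X_0(\pi))\simeq H^\ast_{\G_0}(\B^{ss}_0(2,0))$ (i.e.\ Theorem \ref{T:equivariant}) as a chain of natural equivalences, and then to observe that the other three cases follow by the same argument, with the Yang--Mills--Higgs flow replaced by the Yang--Mills flow in the bundle cases and with the trace and determinant constraints dropped in the non-fixed-determinant cases. The first step is to replace the complex structure group by its maximal compact: since $\SL(2,\CBbb)/\SU(2)$ is contractible, $H^\ast_{\SL(2,\CBbb)}(\Hom(\pi,\SL(2,\CBbb)))\simeq H^\ast_{\SU(2)}(\Hom(\pi,\SL(2,\CBbb)))$. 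By Proposition \ref{P:retract} the inclusion $\Hi(M)\hookrightarrow\Hom(\pi,\SL(2,\CBbb))$ is an $\SU(2)$-equivariant deformation retract, so this group equals $H^\ast_{\SU(2)}(\Hi(M))$.

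Next I would pass to the gauge-theoretic model. By construction $\hol_p$ restricts to a proper $\SU(2)$-equivariant embedding of $\B^H_0/\G_0(p)$ with image $\Hi(M)$; since a proper continuous injection into a locally compact Hausdorff space is a homeomorphism onto its (closed) image, this gives $H^\ast_{\SU(2)}(\Hi(M))\simeq H^\ast_{\SU(2)}(\B^H_0/\G_0(p))$. The based gauge group $\G_0(p)$ is normal in $\G_0$ with quotient $\SU(2)$ via evaluation at $p$, and it acts freely on all of $T^\ast\A_0$ --- a covariantly constant gauge transformation equal to $I$ at $p$ is $I$ everywhere --- hence freely on $\B^H_0$; therefore $H^\ast_{\SU(2)}(\B^H_0/\G_0(p))\simeq H^\ast_{\G_0}(\B^H_0)$. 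Finally, the result of Wilkin \cite{Wi} asserts that the Yang--Mills--Higgs gradient flow on $\B^{ss}_0(2,0)$ converges and produces a $\G_0$-equivariant deformation retract of the semistable stratum onto its minimal critical set, which is precisely the space $\B^H_0$ of solutions to the Hitchin equation; thus $H^\ast_{\G_0}(\B^{ss}_0(2,0))\simeq H^\ast_{\G_0}(\B^H_0)$. Composing these identifications yields Theorem \ref{T:equivariant}. For the bundle cases one uses instead the retraction of $\A^{ss}_0(2,0)$ (resp.\ $\A^{ss}(2,0)$) onto the space of flat connections under the Yang--Mills flow (Daskalopoulos, R\aa de), together with the classical identification of flat connections modulo based gauge with $\Hom(\pi,\SU(2))$ (resp.\ $\Hom(\pi,\U(2))$), which makes Proposition \ref{P:retract} unnecessary there.

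The substantive analytic input is already isolated in Proposition \ref{P:retract} and in Wilkin's convergence theorem, so within the present argument the main point is organizational: one must check that every map in the chain is equivariant for the relevant (full, infinite-dimensional) gauge or structure group and that the retractions are equivariant homotopy equivalences --- which is automatic because the gradient flows and the holonomy map are gauge-natural --- and one must be attentive to the strictly semistable (reducible) locus, where neither the $\G_0^\CBbb$-action nor conjugation is free, so that the steps really are $\SU(2)$-equivariant homeomorphisms of the total spaces and not merely bijections of orbit sets. I expect the only genuine subtlety to be confirming that Wilkin's deformation retraction of $\B^{ss}_0(2,0)$ is (or can be upgraded to) a $\G_0$-equivariant retraction onto $\B^H_0$, rather than only an identification of the $s$-equivalence quotients.
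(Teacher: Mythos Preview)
Your proposal is correct and follows essentially the same chain of equivalences as the paper's proof: reduce $\SL(2,\CBbb)$ to $\SU(2)$ by contractibility, use Proposition~\ref{P:retract} to retract onto $\Hi(M)\simeq\B^H_0/\G_0(p)$, and invoke Wilkin's flow to retract $\B^{ss}_0(2,0)$ onto $\B^H_0$; the bundle cases use the Yang--Mills retraction of \cite{D,R} in place of Wilkin and Proposition~\ref{P:retract}. The only organizational difference is that the paper deduces the non-fixed-determinant isomorphisms from the fixed-determinant ones via the $\Gamma_2$-decomposition \eqref{E:tensor1}, \eqref{E:tensor2}, and Proposition~\ref{P:tensor3}, whereas you propose to rerun the argument directly---both are valid.
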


\begin{proof}
We shall see below that the equivariant cohomology in the fixed and non-fixed determinant
 cases are related (see \eqref{E:tensor1}, \eqref{E:tensor2}, and Proposition \ref{P:tensor3}). 
It therefore suffices to prove the result for the fixed determinant cases.    Consider flat connections $\A_0^{flat}(2,0)$ on a rank 2 bundle with trivial determinant.  The holonomy $\hol_p$ gives   an $\SU(2)$-equivariant homeomorphism
$$
\A_0^{flat}(2,0)\bigr/\G_0(p) \simeq \Hom(\pi, \SU(2))
$$
so $H^\ast_{\G_0}(\A_0^{flat})\simeq H^\ast_{eq.}(\R_0(\pi))$.  On the other hand,
by the result in \cite{D, R}, the inclusion
$$
\A_0^{flat}(2,0)\bigr/\G_0(p)\hookrightarrow \A_0^{ss}(2,0)\doubleslash\G_0(p)
$$
is an $\SU(2)$-equivariant deformation retraction,
and so  $H^\ast_{\G_0}(\A_0^{flat})\simeq H^\ast_{\G_0}(\A^{ss}_0(2,0))$.
Since $\G_0^{\CBbb}/\G_0$ is contractible, the equivalence for $\SU(2)$ representation varieties follows.
Now consider the case of representations to $\SL(2,\CBbb)$.
  By \cite{Wi}, the inclusion
\begin{equation} \label{E:graeme}
\B^H_0\bigr/\G_0(p) \hookrightarrow \B^{ss}_0(2,0)\bigr/\G_0(p)
\end{equation} 
is an $\SU(2)$-equivariant deformation retract, so that
$H^\ast_{\G_0}(\B^{ss}_0(2,0))\simeq H^\ast_{\SU(2)}(\B^H_0\bigr/\G_0(p))$.  On the other hand, by
  Proposition \ref{P:retract}
it follows that 
$$H^\ast_{\SU(2)}(\B^H_0\bigr/\G_0(p))\simeq H^\ast_{\SU(2)}({\mathcal H}(M))\simeq H^\ast_{\SU(2)}(\Hom(\pi,\SL(2,\CBbb)))$$
Since $\SL(2,\CBbb)/\SU(2)$ and  $\G_0^{\CBbb}\bigr/\G_0$ are contractible, the result follows in this case as well.
\end{proof}

\subsection{Equivariant Morse theory} \label{S:eq} %Results from Morse theory
There is an inductive procedure to build the equivariant cohomology of $\B^{ss}(2,k)$ and $\B^{ss}_0(2,k)$, analogous to the one used in \cite{AB} for the equivariant cohomology of $\A^{ss}(2,k)$ and $\A^{ss}_0(2,k)$.  
 First, let $\mathcal C$ temporarily denote either $\A(2,k)$ or $\B(2,k)$, ${\mathcal C}_0$ either $\A_0(2,k)$ or $\B_0(2,k)$, and ${\mathcal C}^{ss}$, ${\mathcal C}_0^{ss}$ the corresponding subspaces of semistable bundles.  Note that the spaces ${\mathcal C}$, ${\mathcal C}_0$ are all contractible.  Hence, the map ${\mathcal C}\times_\G E\G\to B\G$ (resp.\ ${\mathcal C}_0\times_{\G_0} E\G_0\to B\G_0$) induces an isomorphism
 \begin{equation} \label{E:ag}
 A_\G : H^\ast(B\G)\lra H^\ast_\G({\mathcal C})\qquad \left(\text{resp.\ }A_{\G_0} : H^\ast(B\G_0)\lra H^\ast_{\G_0}({\mathcal C})\ \right)
 \end{equation}
Composing $A_\G$ (resp.\ $A_{\G_0}$) with the inclusions $\imath:{\mathcal C}^{ss}\hookrightarrow {\mathcal C}$ (resp.\ $\imath_0:{\mathcal C}_0^{ss}\hookrightarrow {\mathcal C}_0$) induces a map
\begin{align}
k_{\G} : H^\ast(B\G) &\simeq H^\ast_{\G}({\mathcal C}) \lra H^\ast_{\G}({\mathcal C}^{ss}) : k_\G=\imath^\ast\circ A_\G\label{E:kirwan} \\
\bigl(\text{resp.\ } k_{\G_0} : H^\ast(B\G_0)&\simeq H^\ast_{\G_0}({\mathcal C}_0) \lra H^\ast_{\G_0}({\mathcal C}_0^{ss}): k_{\G_0}=\imath_0^\ast\circ A_{\G_0}\ \bigr) \notag
\end{align}
We refer to the maps $k_{\G}$ and $k_{\G_0}$ as the \emph{Kirwan maps}.  When these are surjective we refer to this as \emph{Kirwan surjectivity}.
\begin{theorem}[\cite{AB}]  \label{T:ab}
Kirwan surjectivity holds for  $H^\ast_{\G}(\A^{ss}(2,k))$ and $H^\ast_{\G_0}(\A_0^{ss}(2,k))$.
\end{theorem}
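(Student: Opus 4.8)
This is the theorem of Atiyah and Bott \cite{AB}; the plan is to reproduce their argument, which exhibits the Harder--Narasimhan stratification of $\mathcal{C}$ as \emph{equivariantly perfect} and is the prototype for the Higgs bundle stratification used below. Write $\mathcal{C}=\A(2,k)$; the case $\A_0(2,k)$, with $\G_0$ in place of $\G$, is entirely parallel. Then $\mathcal{C}$ is a disjoint union of $\G$-invariant, locally closed submanifolds $\mathcal{C}_\mu$ indexed by the Harder--Narasimhan type $\mu$, with $\mathcal{C}^{ss}$ the unique open stratum. For rank $2$ and degree $k$, the unstable strata $\mathcal{C}_\ell$ are indexed by the integers $\ell>k/2$: $\mathcal{C}_\ell$ consists of the bundles whose maximal destabilizing subbundle has degree $\ell$; it $\G$-equivariantly retracts onto the subspace of split bundles $L_1\oplus L_2$ with $\deg L_1=\ell$, $\deg L_2=k-\ell$; and it has finite complex codimension $d_\ell=2\ell-k+g-1$ in $\mathcal{C}$, with normal bundle $\nu_\ell$ whose fiber over a split bundle is $H^1(M,L_1^{-1}L_2)$.

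First I would make the initial reduction: since $\mathcal{C}$ is contractible, $A_\G\colon H^\ast(B\G)\to H^\ast_\G(\mathcal{C})$ is an isomorphism, so by the definition \eqref{E:kirwan} of $k_\G$, Kirwan surjectivity for $\mathcal{C}^{ss}$ is precisely the surjectivity of the restriction $H^\ast_\G(\mathcal{C})\to H^\ast_\G(\mathcal{C}^{ss})$. Then I would order the strata by codimension, set $U_n\subset\mathcal{C}$ to be the open union of the strata of codimension $<n$ (so $U_1=\mathcal{C}^{ss}$ and $\bigcup_n U_n=\mathcal{C}$), and for each stratum $\mathcal{C}_\ell$ appearing in the passage from $U_n$ to $U_{n+1}$ use the $\G$-equivariant long exact sequence of the pair, which by the Thom isomorphism takes the form
\[
\cdots\to H^{\ast-2d_\ell}_\G(\mathcal{C}_\ell)\xrightarrow{\;\varphi_\ell\;}H^\ast_\G(U_{n+1})\longrightarrow H^\ast_\G(U_{n+1}\setminus\mathcal{C}_\ell)\to\cdots,
\]
where the composition of $\varphi_\ell$ with restriction back to $\mathcal{C}_\ell$ is cup product with the equivariant Euler class $e_\ell\in H^\ast_\G(\mathcal{C}_\ell)$ of $\nu_\ell$. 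I would then show by induction on $n$ that every $\varphi_\ell$ is injective, equivalently that all these sequences split --- this is the statement that the stratification is equivariantly perfect --- whence $H^\ast_\G(\mathcal{C})\cong H^\ast_\G(\mathcal{C}^{ss})\oplus\bigoplus_\ell H^{\ast-2d_\ell}_\G(\mathcal{C}_\ell)$ and in particular the restriction to $\mathcal{C}^{ss}$ is onto.

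The crux, and the step I expect to be the main obstacle, is this splitting, for which it suffices that cup product with $e_\ell$ be injective on $H^\ast_\G(\mathcal{C}_\ell)$. Here I would invoke the Atiyah--Bott non-vanishing lemma: if a circle acts on a finite-rank complex equivariant vector bundle with all weights of one sign, then the equivariant Euler class is a non-zero divisor in the equivariant cohomology of the base, provided the latter is free as a module over $H^\ast$ of the classifying space of that circle. To apply it, I use the structure of the stratum: by the retraction onto split bundles, $H^\ast_\G(\mathcal{C}_\ell)\cong H^\ast(B\G(L_1))\otimes H^\ast(B\G(L_2))$, where $\G(L_i)$ denotes the gauge group of $L_i$ and $H^\ast(B\G(L_i))\cong H^\ast(\Jac)\otimes H^\ast(B\U(1))$; this ring is free as a module over $H^\ast(BS^1)$ for the antidiagonal circle $S^1=\{\lambda\mapsto(\lambda,\lambda^{-1})\}$ of constant scalar gauge transformations. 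That circle acts on every fiber of $\nu_\ell=H^1(M,L_1^{-1}L_2)$ through a single nonzero weight, whose nonvanishing is exactly the inequality $2\ell>k$ cutting out the stratum; so the lemma applies and $\cup\,e_\ell$ is injective, completing the induction and proving that $k_\G$ is surjective.

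Finally, the fixed-determinant statement for $k_{\G_0}$ follows either by repeating the argument verbatim with $\G_0$ in place of $\G$ --- the strata then retract onto $\{L\oplus(L^{-1}\otimes\det E)\}$, parametrized by $\Jac^\ell$, and the same single-weight computation applies --- or, more economically, from the non-fixed case together with the relation between the two equivariant cohomologies developed at the end of this section (see \eqref{E:tensor1}, \eqref{E:tensor2}, and Proposition \ref{P:tensor3}), under which the Kirwan maps are compatible and surjectivity of $k_\G$ forces that of $k_{\G_0}$.
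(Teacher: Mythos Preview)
The paper gives no proof of this theorem; it is simply cited from \cite{AB}. Your proposal correctly reconstructs the Atiyah--Bott argument --- equivariant perfection of the Harder--Narasimhan stratification via injectivity of cupping with the equivariant Euler class of the normal bundle --- so as a proof of the cited result it is essentially sound.

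Two points deserve correction. First, the phrase ``a single nonzero weight, whose nonvanishing is exactly the inequality $2\ell>k$'' is confused: the antidiagonal circle $\lambda\mapsto(\lambda,\lambda^{-1})$ acts on $L_1^{-1}L_2$, hence on every fiber $H^1(M,L_1^{-1}L_2)$, with weight $-2$ regardless of $\ell$. The inequality $2\ell>k$ is what singles out the unstable strata and guarantees $h^0(L_1^{-1}L_2)=0$ so that the normal bundle has the expected rank $d_\ell$; it has nothing to do with the weight being nonzero.

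Second, your ``more economical'' route to the fixed-determinant case does not work as stated. The relation \eqref{E:tensor1} only identifies $H^\ast_\G(\A^{ss}(2,k))$ with $H^\ast_{\G_0}(\A_0^{ss}(2,k))^{\Gamma_2}\otimes H^\ast(J_k(M))\otimes H^\ast(B\U(1))$, so surjectivity of $k_\G$ forces $k_{\G_0}$ to surject only onto the $\Gamma_2$-invariant part. To upgrade this to full surjectivity you would need to know in advance that $\Gamma_2$ acts trivially on $H^\ast_{\G_0}(\A_0^{ss}(2,k))$ --- but that triviality is itself deduced in \cite{AB} \emph{from} the surjectivity of $k_{\G_0}$ (since $\Gamma_2$ acts trivially on $H^\ast(B\G_0)$), so the argument is circular. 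Your first approach, running the stratification argument directly with $\G_0$, is the correct one and is what Atiyah and Bott actually do.
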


The situations for $\B(2,k)$ and $\B_0(2,k)$ are somewhat different.  To describe this 
we recall the relevant results from \cite{DWWW}.  Consider the functional
$$
\YMH(A,\Psi)= \Vert F_A+\tfrac{1}{2}[\Psi,\Psi] \Vert^2_{L^2}
$$
on the space of holomorphic pairs  $\B_0(2,k)$ (resp.\ $\B(2,k)$), where the Higgs field $\Phi$ is related to $\Psi$ by $\Psi=\Phi+\Phi^\ast$.  The minimal critical set $\eta_0$ is identified with the Hitchin space (i.e.\ ${\mathcal B}_0^H$ \eqref{E:bh} for $k=0$), whereas the non-minimal critical sets $\eta_d$, $d=1,2, \ldots$, are Hitchin spaces of split bundles parametrized by the degree $d$ of the maximal destabilizing line subbundle.  Let $Y_d$ denote the stable manifold of $\eta_d$, and note that $Y_0=\B^{ss}_0(2,0)$ (resp.\ $\B^{ss}(2,0)$).
It is shown in \cite{Wi} that the $L^2$-gradient flow of $\YMH$ gives an equivariant retraction of $Y_d$ onto $\eta_d$.   Denote by $X_d=\cup_{d'\leq d} Y_{d'}$.  The main difficulty addressed in \cite{DWWW} is that unlike the situation in \cite{AB}, $Y_d$ does not have a normal bundle in $X_{d}$, and the stable manifolds $Y_d$ are singular in general.  Nevertheless, it is shown in \cite[Sect.\ 4]{DWWW} that for $\B(2,k)$ the long exact sequences of the pair $(X_d, X_{d-1})$,
\begin{equation} \label{E:xd}
\cdots\lra H^p_{\G}(X_d, X_{d-1})\buildrel\alpha^p\over \lra H^p_{\G}(X_d)\buildrel\beta^p\over\lra  H^p_{\G}(X_{d-1})\buildrel\gamma^p\over\lra\cdots
\end{equation}
split (i.e.\ $\ker \alpha^p$ is trivial) into short exact sequences for all $d$.  In particular, the Kirwan map $k_\G$ is surjective for 
$ H^\ast_{\G}(\B^{ss}(2,k))
$.

%  We state here a modification of this result.  Recall that 
%  $$\Gamma=\pi_0(\G)\simeq H^1(M,\ZBbb)=\pi_1(J_k(M))$$
%  where $J_k(M)$ is the Jacobian variety of degree $k$ divisors on  $M$.
%   Let $\G'\subset\G$ denote the subgroup corresponding to $H^1(M,2\ZBbb)\subset H^1(M,\ZBbb)$.
%   Then $H^\ast(B\G')\simeq H^\ast(B\G)$ \cite[Prop.\ 2.16]{AB}.
%\begin{lemma} \label{L:gprime}
%The modified Kirwan map
%$$
%k_{\G'} : H^\ast(B\G)\simeq H^\ast_{\G'}(\B(2,k)) \lra H^\ast_{\G'}(\B^{ss}(2,k))
%$$
%is surjective.
%\end{lemma}

It was also shown in \cite{DWWW} that the analogous sequence \eqref{E:xd} does not split in general for 
the fixed determinant  case  $\B_0(2,k)$.  
 An explicit description of the failure of exactness goes as follows. 
 Consider the following diagram from \cite[eq.\ (27)]{DWWW}.

\begin{equation} \label{E:xdb}
\xymatrix{  & \ar[d]^{\delta^{p-1}} \vdots &&  \\
\cdots \ar[r]^{\gamma^{p-1}\qquad } & \ar[d]^{\cong} H^p_{{\G_0}}(X_d, X_{d-1}) \ar[r]^{\quad \alpha^p}
&  H^p_{{\G_0}}(X_d) \ar[r]^{\beta^p\ } &  H^p_{{\G_0}}(X_{d-1})\ar[r]^{\qquad\gamma^p\ } & \cdots \\
& \ar[d]^{\zeta^p} H^p_{{\G_0}}(\nu_d^-, \nu_d') && \\
& \ar[d]^{\lambda^p} H^p_{{\G_0}}(\nu_d^-, \nu^{\prime\prime}_d) && \\
& \ar[d]^{\delta^p}H^p_{{\G_0}}(\nu^{\prime}_d, \nu^{\prime\prime}_d) &&\\
&\vdots &&
}
\end{equation}
For our purposes,  the precise definitions of $\nu_d^-$,
 $\nu^{\prime}_d$, and 
$\nu^{\prime\prime}_d$ are not important (see \cite[Def.\ 2.1]{DWWW} for more details).
We will only use the following facts.
\begin{flalign}
H^\ast_{\G_0}(X_d,X_{d-1})&\cong H^\ast_{\G_0}(\nu_d, \nu^{\prime}_d) 
& \text{\cite[Prop.\ 3.1]{DWWW}} \\
H^\ast_{{\G_0}}(\nu_d^-, \nu^{\prime\prime}_d)\cong
H^{\ast-2\mu_d}&(\eta_d)\cong \left( H(J_0(M))\otimes H(B\U(1))\right)^{\ast-2\nu_d} &
 \text{\cite[eqs.\ (11) and (25)]{DWWW}} 
 \label{E:bb} \\
H^\ast_{{\G_0}}(\nu^{\prime}_d, \nu^{\prime\prime}_d)
&\cong
H^{\ast-2\nu_d}(\widetilde S^{2g-2-2d}M)
&\text{\cite[eq.\ (12) and Sect.\ 4.2]{DWWW}}  \label{E:bd} \\
\ker\alpha^p &\cong \ker \zeta^p &\text{\cite[Prop.\ 4.14]{DWWW}} \label{E:kernel}
\end{flalign}
where $\mu_d=g-1+2d$ and  $\widetilde\Sym^n M$ is the pull-back of the symmetric product fibration $\Sym^n M\to J_n(M)$ by the $\Gamma_2$ covering $J_n(M)\to J_n(M)$.

We now explain the relationship between this stratification and Prym representations.   First, recall the definition \eqref{E:gamma} of $\Gamma_2$.  Let 
$
\widehat \Gamma_2=\Hom(\Gamma_2, \{\pm 1\})
$.
Fixing  an homology basis $\{e_i\}_{i=1}^{2g}$ for $H_1(M,\ZBbb)$ gives a dual generating set $\{\gamma_i\}_{i=1}^{2g}$ of $\Gamma_2$ defined by $\gamma_i(e_j)=1-2\delta_{ij}$.
There  is then an isomorphism
$
 \Gamma_2\ {\buildrel \sim \over \to}\ \widehat\Gamma_2
 $ given by  $\gamma \mapsto \varphi_\gamma$ where $\varphi_\gamma(\gamma_j)=\gamma(e_j)
$.
We shall use this identification throughout the paper.
Next, using the action of $\Gamma_2$ on $\widetilde S^n M$ we have
$$
H^\ast(\widetilde\Sym^n M)=\bigoplus_{\varphi\in \widehat\Gamma_2} H^\ast(\widetilde\Sym^n M)_{\varphi}=   \bigoplus_{\varphi\in\widehat \Gamma_2}  H^\ast(\Sym^n M, \LL_{\varphi})
$$
where the subscript indicates the $\varphi$-isotypical subspace, and $\LL_{\varphi}\to S^n M$
 is the flat  line bundle determined by $\varphi$.
Let $L_\gamma\to M$ denote the flat line bundle on $M$ determined by $\gamma$.  For $\gamma\neq 1$ we have (see \cite[p.\ 98]{H})
$$
H^p(\Sym^n M, \LL_{\varphi_\gamma})=\begin{cases}  0 & p\neq n \\
\Lambda^n H^1(M, L_\gamma) & p=n
\end{cases}
$$
Now $H^1(M,L_\gamma)=W_\gamma^-$, where $W_\gamma^-$, as defined as in the Introduction, 
is the $(-1)$-eigenspace of $H^1(M_\gamma)$, where $M_\gamma$ is the double cover of $M$ defined by $\gamma$.
Using \eqref{E:bd}, we have the following

\begin{lemma}[cf.\ {\cite[Lemma 4.18]{DWWW}}] \label{L:prym}
Given a choice of homology basis there are isomorphisms
$$
H^\ast_{\G_0}(\nu_d', \nu^{\prime\prime}_d)^{\Gamma_2}=
H^{\ast-2\mu_d}(\Sym^{2g-2-2d} M) 
$$
and for $\gamma\neq 1$, 
$$
H^{6g-6-q}_{\G_0}(\nu'_d, \nu^{\prime\prime}_d)_{\varphi_\gamma}
=\begin{cases}
0  & q\neq 2g-2-2d\\
 V(q,\gamma)  & q=2g-2-2d
\end{cases}
$$
\end{lemma}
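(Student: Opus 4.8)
The plan is to unwind the identification \eqref{E:bd}, namely $H^\ast_{\G_0}(\nu'_d, \nu^{\prime\prime}_d)\cong H^{\ast-2\nu_d}(\widetilde\Sym^{2g-2-2d}M)$, and decompose it into $\Gamma_2$-isotypical pieces using the action of $\Gamma_2$ on the $\Gamma_2$-covering $\widetilde\Sym^n M\to\Sym^n M$. First I would recall that $\widetilde\Sym^n M$ is the pullback of $\Sym^n M\to J_n(M)$ by the covering $J_n(M)\to J_n(M)$, so that by the transfer/projection argument for a finite abelian cover one gets $H^\ast(\widetilde\Sym^n M)=\bigoplus_{\varphi\in\widehat\Gamma_2}H^\ast(\Sym^n M,\LL_\varphi)$, exactly as stated in the excerpt just before the lemma. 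The trivial character $\varphi=1$ contributes $H^\ast(\Sym^n M)$ with ordinary (trivial) coefficients, which after the degree shift by $2\mu_d$ gives the first displayed formula — here I would note $n=2g-2-2d$ and that the relevant shift in the $\Gamma_2$-invariant part is $2\mu_d$, matching the shift in \eqref{E:bb} restricted to invariants, with $\mu_d=g-1+2d$.

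For $\gamma\neq 1$, the second formula follows from Hitchin's computation (cited as \cite[p.~98]{H}) that $H^p(\Sym^n M,\LL_{\varphi_\gamma})$ vanishes unless $p=n$, in which case it equals $\Lambda^n H^1(M,L_\gamma)$. Then I would invoke the identification $H^1(M,L_\gamma)=W^-_\gamma$ recalled in the excerpt — this is the $(-1)$-eigenspace of the involution on $H^1(M_\gamma)$, of dimension $2g-2$ — so that $H^n(\Sym^n M,\LL_{\varphi_\gamma})=\Lambda^n W^-_\gamma = V(n,\gamma)$ by the definition \eqref{E:altprym}. Setting $q=2g-2-2d$ (so $n=q$) and incorporating the degree shift $2\nu_d$ from \eqref{E:bd}: the nonzero group sits in total degree $q+2\nu_d$, and one checks $q+2\nu_d = 6g-6-q$ is consistent with the indexing of the statement (i.e. $\nu_d$ is chosen precisely so that the top stratum contributes in complementary degree), giving $H^{6g-6-q}_{\G_0}(\nu'_d,\nu^{\prime\prime}_d)_{\varphi_\gamma}=V(q,\gamma)$ when $q=2g-2-2d$ and $0$ otherwise.

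The one genuinely delicate point — and the step I expect to be the main obstacle — is bookkeeping the degree shifts and checking they are consistent across \eqref{E:bb}, \eqref{E:bd}, and the statement of Lemma~\ref{L:prym}: the excerpt uses both $\mu_d$ and $\nu_d$ (with $\mu_d=g-1+2d$ and $\nu_d$ the index appearing in \cite{DWWW}), and one must verify that the shift on the $\Gamma_2$-invariant part is by $2\mu_d$ while the shift producing $V(q,\gamma)$ lands the nonzero group in degree $6g-6-q$. Concretely this requires the relation tying $\nu_d$, $\mu_d$, and the complex dimension $6g-6$ together; once that arithmetic identity is in hand, the rest is the formal decomposition above plus citation of Hitchin's vanishing theorem. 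I would also remark that the splitting of the direct sum over $\gamma$ is only canonical after fixing the homology basis $\{e_i\}$, since that is what pins down the isomorphism $\Gamma_2\xrightarrow{\sim}\widehat\Gamma_2$ used to label the isotypical components — this is why the hypothesis "given a choice of homology basis" appears in the statement, and it should be invoked explicitly when identifying $H^\ast(\widetilde\Sym^n M)_\varphi$ with $H^\ast(\Sym^n M,\LL_\varphi)$ indexed by $\gamma$.
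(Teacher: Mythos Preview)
Your approach is exactly the one the paper takes: the lemma is presented as an immediate consequence of the identification \eqref{E:bd} together with the decomposition $H^\ast(\widetilde\Sym^n M)=\bigoplus_\varphi H^\ast(\Sym^n M,\LL_\varphi)$ and Hitchin's computation of $H^p(\Sym^n M,\LL_{\varphi_\gamma})$, all of which are laid out in the paragraph immediately preceding the statement. Regarding the bookkeeping you flag as the delicate point: the appearance of both $\mu_d$ and $\nu_d$ in \eqref{E:bb}--\eqref{E:bd} is a notational slip in the paper---they are the same integer $g-1+2d$, and once you set $\nu_d=\mu_d$ the arithmetic $q+2\mu_d=(2g-2-2d)+(2g-2+4d)=4g-4+2d=6g-6-q$ checks immediately, so there is no hidden obstacle.
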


The result we will need is

\begin{proposition} \label{P:dww}
Let $S=\{2j\}_{j=1}^{g-2}$.  
\begin{enumerate}
\item 
\begin{enumerate}
\item For $q\not\in S$, the Kirwan map surjects onto $H^{6g-6-q}_{\G_0}(X_d)$ for all $d$.
\item For $q\in S$,  $p=6g-6-q$, there is precisely one $d=d_q$, $2d_q=2g-2-q$,  for which the horizontal long exact sequence in $\eqref{E:xdb}$ fails to be exact.
\end{enumerate}
\item  Let $q\in S$.
\begin{enumerate}
\item For $d>d_q$, the Kirwan map is surjective onto $H^{6g-6-q}_{\G_0}(X_{d-1})$.  
\item For $d=d_q$ we have
\begin{align*}
0\lra &\ker\gamma^{6g-6-q}\lra H^{6g-6-q}_{\G_0}(X_{d_q-1})\lra \ker \alpha^{{6g-6-q+1}}\lra 0 \\
&\quad\parallel \\
& \beta^{6g-6-q}\left(H^{6g-6-q}_{\G_0}(X_{d_q})\right)
\end{align*}
where $\ker\alpha^{{6g-6-q+1}}$ is identified with $\bigoplus_{1\neq\gamma\in\Gamma_2} V(q,\gamma)$.
\item For $d <d_q$, the sequence
$$
0\lra H^{6g-6-q}_{\G_0}(X_d, X_{d-1})  {\buildrel\quad \alpha^{6g-6-q}\over \lra} H^{6g-6-q}_{\G_0}(X_d)\ {\buildrel\quad \beta^{6g-6-q}\over \lra} H^{6g-6-q}_{\G_0}(X_{d-1})\lra 0
$$
is exact.
\end{enumerate}
\end{enumerate}
\end{proposition}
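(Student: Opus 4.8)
\noindent\emph{Proof sketch.}
The plan is to assemble the exact sequences of the diagram \eqref{E:xdb} with the $\Gamma_2$-equivariant computation of Lemma \ref{L:prym}. Since $\Gamma_2$ acts on all the spaces in \eqref{E:xdb} commuting with $\G_0$, every long exact sequence there decomposes as a direct sum over the characters $\varphi\in\widehat\Gamma_2\simeq\Gamma_2$, and I would analyze the trivial character and the characters $\varphi_\gamma$, $\gamma\neq 1$, separately. The algebraic backbone is the identification $\ker\alpha^p\cong\ker\zeta^p$ from \eqref{E:kernel}, combined with the vertical exact sequence of the triple $(\nu_d^-,\nu_d',\nu_d'')$ in \eqref{E:xdb}, which gives $\ker\zeta^p=\image\delta^{p-1}$, where $\delta^{p-1}\colon H^{p-1}_{\G_0}(\nu_d',\nu_d'')\to H^{p}_{\G_0}(\nu_d^-,\nu_d')\cong H^p_{\G_0}(X_d,X_{d-1})$.

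On the $\Gamma_2$-invariant part one has that the horizontal sequence splits for every $d$: by \cite[Sect.\ 4]{DWWW} the analogous sequences in the non-fixed determinant case $\B(2,k)$ split, and the comparison between the fixed and non-fixed determinant equivariant cohomologies (\eqref{E:tensor1}, \eqref{E:tensor2}, Proposition \ref{P:tensor3}) transports this to the $\Gamma_2$-invariant part, so $(\ker\alpha^p)^{\Gamma_2}=0$ for all $p,d$. For $\gamma\neq 1$ I would use that $H^\ast_{\G_0}(\nu_d^-,\nu_d'')$ is entirely $\Gamma_2$-invariant: by \eqref{E:bb} it is $H^{\ast-2\mu_d}(\eta_d)$, and the $\Gamma_2$-action on the critical set $\eta_d$ covers translation on its Jacobian factor and so is trivial on cohomology (the same input that underlies Lemma \ref{L:prym}). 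Hence $H^\ast_{\G_0}(\nu_d^-,\nu_d'')_{\varphi_\gamma}=0$, and feeding this into the vertical sequence forces $\zeta^p_{\varphi_\gamma}=0$ and $\lambda^{p-1}_{\varphi_\gamma}=0$, so $\delta^{p-1}_{\varphi_\gamma}$ is an isomorphism onto $\ker\zeta^p_{\varphi_\gamma}$ and therefore
$$
\ker\alpha^p_{\varphi_\gamma}\;\cong\;H^{p-1}_{\G_0}(\nu_d',\nu_d'')_{\varphi_\gamma}.
$$
By Lemma \ref{L:prym} the right side equals $V(q,\gamma)$ when $p-1=6g-6-q$ with $2d=2g-2-q$, and is $0$ otherwise. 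Combining with the invariant case: for fixed $p$, $\ker\alpha^p\neq 0$ for at most one stratum $d$ (the one with $2d=6g-5-p$, when that is a genuine index), and there $\ker\alpha^p\cong\bigoplus_{1\neq\gamma\in\Gamma_2}V(q,\gamma)$ with $q=2g-2-2d$.

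It then remains to read off the assertions by chasing \eqref{E:xd}. Writing $p=6g-6-q$: a parity check gives $\ker\alpha^{6g-6-q}=0$ for $q$ even, while $\ker\alpha^{6g-5-q}\neq 0$ exactly when $2d=2g-2-q$, i.e.\ $d=d_q$, provided $d_q$ is a genuine stratum index. If $q\notin S$ then $d_q=g-1-q/2$ is not such an index (it is negative, exceeds $g-2$, or $q$ is odd), so all relevant $\ker\alpha$ vanish; descending the filtration $\B_0(2,0)=X_N\supset\cdots\supset X_0=\B^{ss}_0(2,0)$ and using that each connecting map in \eqref{E:xd} then vanishes shows every $\beta^{6g-6-q}$ is surjective, which is (1)(a). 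If $q\in S$, the only failure of splitting is at $\alpha^{6g-5-q}$ for $d=d_q$ --- this is (1)(b); then (2)(a) holds since for $d-1\ge d_q$ one has descended only past strata $d'\neq d_q$; (2)(c) holds since for $d<d_q$ both $\alpha^{6g-6-q}$ is injective and $\gamma^{6g-6-q}=0$ (its image being $\ker\alpha^{6g-5-q}=0$ for a stratum $d\neq d_q$), giving the short exact sequence; and for $d=d_q$ exactness of \eqref{E:xd} at $H^{6g-6-q}_{\G_0}(X_{d_q-1})$ and at $H^{6g-5-q}_{\G_0}(X_{d_q},X_{d_q-1})$ yields
$$
0\lra \beta^{6g-6-q}\!\left(H^{6g-6-q}_{\G_0}(X_{d_q})\right)\lra H^{6g-6-q}_{\G_0}(X_{d_q-1})\ {\buildrel\gamma^{6g-6-q}\over\lra}\ \ker\alpha^{6g-5-q}\lra 0,
$$
with $\ker\alpha^{6g-5-q}\cong\bigoplus_{1\neq\gamma}V(q,\gamma)$, which is (2)(b).

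The main obstacle is the top stratum $d=g-1$, where $2g-2-2d=0$ and the relevant space $\widetilde{S}^{0}M$ is a finite $\Gamma_2$-torsor rather than a symmetric product fibred over a Jacobian; there one must check directly --- comparing $H^\ast_{\G_0}(\nu_{g-1}',\nu_{g-1}'')$ with $H^\ast_{\G_0}(\nu_{g-1}^-,\nu_{g-1}'')$ --- that this stratum contributes no non-trivial isotypical obstruction, which is exactly why $q=0$ stays out of $S$. That verification, together with the routine bookkeeping that $2d=6g-5-p$ lands in the admissible range of stratum indices precisely for $q\in S$, is where the real work lies.
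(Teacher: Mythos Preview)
Your proof is correct and follows essentially the same route as the paper: decompose the diagram \eqref{E:xdb} into $\Gamma_2$-isotypical pieces, use $\ker\alpha^p\cong\ker\zeta^p$ from \eqref{E:kernel} together with the vertical long exact sequence and Lemma~\ref{L:prym} to identify the non-invariant components, and then read off all the assertions from the horizontal long exact sequence. The only differences are cosmetic: the paper cites \cite[Cor.~4.27]{DWWW} directly for the vanishing of $(\ker\alpha)^{\Gamma_2}$ rather than deducing it from the non-fixed determinant splitting via \eqref{E:tensor2} as you do (which is essentially how that corollary is proved anyway), and the paper argues the non-invariant identification via the short exact sequence $0\to\ker\delta\to H^{p-1}(\nu_d',\nu_d'')\to\ker\zeta^p\to 0$ and $\Gamma_2$-triviality of $\ker\delta$ rather than via the equivalent observation that $H^\ast_{\G_0}(\nu_d^-,\nu_d'')_{\varphi_\gamma}=0$---your flagging of the boundary stratum $d=g-1$ is a point the paper's proof also leaves implicit.
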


\begin{proof}
First, we claim that $\ker \alpha^{p+1}$ vanishes for all but one degree.
Since exact sequences are preserved upon restriction to isotypical pieces 
(cf.\ \cite[Lemma 4.15]{DWWW}), it suffices to prove this individually 
for $(\ker \alpha^{p+1})^{\Gamma_2}$ and $(\ker \alpha^{p+1})_{\varphi}$, $\varphi\neq 1$.
Now $(\ker \alpha^{p+1})^{\Gamma_2}$  vanishes by \cite[Cor.\ 4.27]{DWWW}.  Similarly, the result
 for $\gamma\neq 1$ is a consequence of the second statement in Lemma \ref{L:prym}.
Since $H^{6g-6-q}(\B_{d,\varepsilon}, \B_{d,\varepsilon}^{\prime\prime})\to \ker \delta^{{6g-6-q}}$ is surjective, \eqref{E:bb} implies that
 $\Gamma_2$ acts trivially on $\ker\delta^{6g-6-q}$.  Now consider the exact sequence 
 $$
 0\lra \ker\delta^{6g-6-q}\lra H^{6g-6-q}(\nu^\prime_d, \nu_d^{\prime\prime})
 \lra \ker\zeta^{{6g-6-q}+1}\lra 0
 $$
 By Lemma \ref{L:prym}  it follows that
  $$
  (\ker\zeta^{{6g-6-q}+1})_{\varphi_\gamma}
\simeq H^{6g-6-q}(\nu^\prime_d, \nu_d^{\prime\prime})_{\varphi_\gamma}\simeq  V(q,\gamma)
 $$
 Since $\ker\alpha^{{6g-6-q}+1}\simeq\ker\zeta^{{6g-6-q}+1}$ by \eqref{E:kernel}, the decomposition in part (b) follows.
This completes the proof.
\end{proof}

We conclude this section by pointing out the following

\begin{lemma} \label{L:beta}
The action of $\Gamma_2$ on $\ker \beta^p$ in \eqref{E:xdb} is trivial for all $p$.
\end{lemma}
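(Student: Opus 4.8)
The plan is to leverage the long exact sequence \eqref{E:xd} together with the identification of $\ker\alpha^p$ provided by Proposition \ref{P:dww} and the description of $H^\ast_{\G_0}(X_d, X_{d-1})$ given in \eqref{E:bb} via \eqref{E:bd}. Recall from \eqref{E:xd} that for every $d$ we have $\image \beta^p = \ker\gamma^p$ and a short exact sequence
\begin{equation*}
0\lra \image\alpha^p \lra H^p_{\G_0}(X_d) \lra \ker\gamma^p \lra 0\ ,
\end{equation*}
and hence an identification $\ker\beta^p \simeq \image\alpha^p \simeq H^p_{\G_0}(X_d, X_{d-1})/\ker\alpha^p$. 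So the task reduces to showing that $\Gamma_2$ acts trivially on this quotient. First I would observe, using Proposition \ref{P:dww}(1)(b) and the accompanying identification of $\ker\alpha^{p+1}$ (equivalently $\ker\alpha^p$) with $\bigoplus_{1\neq\gamma} V(q,\gamma)$, that for all but the single exceptional degree $p = 6g-6-q$ with $q\in S$ one has $\ker\alpha^p = 0$, so there $\ker\beta^p \simeq H^p_{\G_0}(X_d, X_{d-1})$ outright.

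Next I would analyze the $\Gamma_2$-action on $H^\ast_{\G_0}(X_d, X_{d-1})$ itself. By \cite[Prop.\ 3.1]{DWWW} this is $H^\ast_{\G_0}(\nu_d, \nu_d')$, and the relevant exact sequence threading \eqref{E:xdb} relates it to $H^\ast_{\G_0}(\nu_d^-, \nu_d'')$ and $H^\ast_{\G_0}(\nu_d', \nu_d'')$; the first of these, by \eqref{E:bb}, is $\left(H(J_0(M))\otimes H(B\U(1))\right)^{\ast-2\nu_d}$ on which $\Gamma_2$ acts trivially (as noted in the proof of Proposition \ref{P:dww}), while the second, by Lemma \ref{L:prym}, carries the nontrivial Prym pieces $V(q,\gamma)$ concentrated in the single degree $q = 2g-2-2d$. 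Restricting the whole of \eqref{E:xdb} to isotypical components (exactness is preserved, \cite[Lemma 4.15]{DWWW}), on the $\varphi$-component with $\varphi\neq 1$ the only possibly nonzero contribution to $H^p_{\G_0}(X_d,X_{d-1})_\varphi$ comes through the connecting maps from $H^{\ast}_{\G_0}(\nu_d',\nu_d'')_{\varphi_\gamma} \simeq V(q,\gamma)$, and Proposition \ref{P:dww} tells us precisely that this entire contribution is $\ker\alpha^{p}$. Therefore on the quotient $H^p_{\G_0}(X_d, X_{d-1})/\ker\alpha^p$ all nontrivial isotypical pieces die, i.e.\ $\Gamma_2$ acts trivially on $\ker\beta^p$.

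To make this clean I would phrase it as: the natural surjection $H^p_{\G_0}(X_d, X_{d-1})\to \image\alpha^p = \ker\beta^p$ is $\Gamma_2$-equivariant, and its kernel $\ker\alpha^p$ already contains every nontrivial-isotypical summand of the source, since by the displayed computation in the proof of Proposition \ref{P:dww} we have $H^p_{\G_0}(X_d, X_{d-1})_\varphi = (\ker\alpha^p)_\varphi$ for all $\varphi\neq 1$ (both being $V(q,\gamma)$ when $p = 6g-6-q$, $q = 2g-2-2d$, and both being zero otherwise). Consequently the $\varphi$-component of $\ker\beta^p$ vanishes for every $\varphi\neq 1$, which is exactly the assertion that $\Gamma_2$ acts trivially on $\ker\beta^p$.

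The main obstacle I anticipate is bookkeeping rather than conceptual: one must be careful that the degree shifts in \eqref{E:bb} and \eqref{E:bd} line up so that the nontrivial isotypical content of $H^\ast_{\G_0}(X_d, X_{d-1})$ really is confined to the single degree where $\ker\alpha^p$ lives, and that the $\Gamma_2$-equivariance of all the maps in \eqref{E:xdb} — in particular the identification \eqref{E:kernel} of $\ker\alpha^p$ with $\ker\zeta^p$ — is genuinely compatible with the isotypical decomposition. All of these are established in \cite{DWWW} in the course of proving Proposition \ref{P:dww}, so the proof is essentially an assembly of facts already in hand; no new estimate or construction is needed.
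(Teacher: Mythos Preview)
Your proposal is correct in substance but takes a longer route than the paper, and there is a small degree slip in the parenthetical (the nontrivial isotypical piece of $H^p_{\G_0}(X_d,X_{d-1})$ sits in degree $p=6g-5-q$, not $6g-6-q$, because of the connecting map $\delta^{p-1}$).

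The paper's argument is a one-line diagram chase: from the commuting square of short exact sequences
\[
\begin{array}{ccccccccc}
0 &\to& \ker\alpha^p &\to& H^p_{\G_0}(X_d,X_{d-1}) &\to& \ker\beta^p &\to& 0\\
  &   & \downarrow\wr & & \downarrow\wr & & \downarrow & & \\
0 &\to& \ker\zeta^p &\to& H^p_{\G_0}(\nu_d^-,\nu_d') &\to& \ker\lambda^p &\to& 0
\end{array}
\]
the five lemma gives $\ker\beta^p\simeq\ker\lambda^p$ equivariantly, and $\ker\lambda^p\subset H^p_{\G_0}(\nu_d^-,\nu_d'')$, on which $\Gamma_2$ acts trivially by \eqref{E:bb}. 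No appeal to Proposition~\ref{P:dww} or Lemma~\ref{L:prym} is needed.

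Your argument reaches the same conclusion via the quotient description $\ker\beta^p\simeq H^p_{\G_0}(X_d,X_{d-1})/\ker\alpha^p$, then establishes $H^p_{\G_0}(X_d,X_{d-1})_\varphi=(\ker\alpha^p)_\varphi$ for $\varphi\neq1$. This equality is in fact immediate without computing either side: since $H^p_{\G_0}(\nu_d^-,\nu_d'')_\varphi=0$ by \eqref{E:bb}, the map $\zeta^p$ vanishes on $\varphi$-parts, so $(\ker\zeta^p)_\varphi=H^p_{\G_0}(\nu_d^-,\nu_d')_\varphi$, and then \eqref{E:kernel} gives the claim. Your invocation of Proposition~\ref{P:dww} and the explicit $V(q,\gamma)$ identification is therefore an unnecessary detour (and the source of the off-by-one). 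Stripped of that detour, your argument and the paper's are two presentations of the same fact: $\ker\beta^p\simeq\operatorname{im}\zeta^p=\ker\lambda^p\subset H^p_{\G_0}(\nu_d^-,\nu_d'')$.
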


\begin{proof}
Consider the exact sequences
$$
\xymatrix{
 0  \ar[r] & \ar[d]^{\wr} \ker\alpha^p  \ar[r] & \ar[d]^{\wr} H^p(X_d, X_{d-1})  \ar[r] & \ar[d] \ker\beta^p  \ar[r] &0 \\
 0  \ar[r] &  \ker\zeta^p \ar[r] &  H^p(\nu_d^-, \nu^\prime_d)  \ar[r] & \ker\lambda^p  \ar[r] &0 
}
$$
It follows that $\ker\beta^p\simeq \ker\lambda^p$, and this is equivariant with respect to action of $\Gamma_2$.  But
$
\ker\lambda^p\subset H^p(\nu_d, \nu^{\prime\prime}_d)  
$,
and by \eqref{E:bb} 
the $\Gamma_2$ action is trivial.
\end{proof}

\subsection{Fixed and non-fixed determinant} %Action of $H_1(M,\ZBbb)$ on moduli spaces
The equivariant cohomology for fixed and non-fixed determinant spaces are related through the action of
$\Gamma_2=H^1(M, \ZBbb/2)$. 

\begin{proposition}[cf.\ {\cite[Sect.\ 9]{AB}} and {\cite[Sect.\ 4.2]{DWWW}}] 
Under the action of $\Gamma_2$, we have
\begin{align}
H^\ast_{\G}(\A^{ss}(2,k))&\simeq H^\ast_{\G_0}(\A^{ss}_0(2,k))^{\Gamma_2}
 \otimes H^\ast(J_k(M))\otimes H^\ast(B\U(1))\label{E:tensor1} \\
H^\ast_{\G}(\B^{ss}(2,k))&\simeq H^\ast_{\G_0}(\B^{ss}_0(2,k))^{\Gamma_2} 
\otimes H^\ast(J_k(M))\otimes H^\ast(B\U(1)) \label{E:tensor2}
\end{align}
\end{proposition}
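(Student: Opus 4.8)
The plan is to reduce the statement to the corresponding fact about the groups themselves, then transport it across the deformation retractions established earlier. The starting point is the short exact sequence of gauge groups
\begin{equation*}
1\lra \G_0\lra \G\lra \mathcal{G}/\G_0\lra 1,
\end{equation*}
where the quotient $\G/\G_0$ is the gauge group of the determinant line bundle $\det E$; after a choice of base point this quotient is homotopy equivalent to the based mapping space $\Map(M,\U(1))$ together with its $\U(1)$ of constants, so that $B(\G/\G_0)\simeq J_k(M)\times B\U(1)$ (the Jacobian absorbing the non-constant part and $B\U(1)$ the constants). The finite group $\Gamma_2=H^1(M,\ZBbb/2)$ sits inside $\G/\G_0$ as the $2$-torsion of the Jacobian factor and acts on $H^\ast_{\G_0}$ of everything in sight precisely by the tensoring action described in the Introduction. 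So the content of the Proposition is a Leray--Hirsch / transfer argument for the fibration $B\G_0\to B\G$ with fiber $J_k(M)\times B\U(1)$, carried out $\G_0$-equivariantly over the semistable stratum.

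First I would prove the two identities at the level of classifying spaces, i.e.\ establish
\begin{equation*}
H^\ast(B\G)\simeq H^\ast(B\G_0)^{\Gamma_2}\otimes H^\ast(J_k(M))\otimes H^\ast(B\U(1)).
\end{equation*}
This is essentially in \cite[Sect.\ 9]{AB}: the fibration $B\G_0\to B\G\to B(\G/\G_0)$ has a section up to homotopy coming from the splitting $\G\simeq \G_0\times (\G/\G_0)$ after fixing a determinant (tensoring trivializes the $\det$-bundle action), so the Serre spectral sequence collapses and $H^\ast(B\G)$ is a free module over $H^\ast(B(\G/\G_0))=H^\ast(J_k(M))\otimes H^\ast(B\U(1))$; the complementary factor is exactly the $\Gamma_2$-invariants of $H^\ast(B\G_0)$ because the residual finite quotient acting on the fiber cohomology $H^\ast(B\G_0)$ is $\Gamma_2$, and rationally invariants split off the coinvariants. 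Here one uses that we work with $\QBbb$-coefficients so that taking $\Gamma_2$-invariants is exact.

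Next I would push this through the semistable strata. For the bundle case one invokes Theorem \ref{T:ab} (Kirwan surjectivity of $k_\G$ and $k_{\G_0}$): the maps $H^\ast(B\G)\to H^\ast_\G(\A^{ss}(2,k))$ and $H^\ast(B\G_0)\to H^\ast_{\G_0}(\A^{ss}_0(2,k))$ are surjective, and in fact in \cite{AB} the kernels are generated by the Kirwan classes associated to the unstable strata; one checks these are compatible with the tensor decomposition above (the classes for $\A^{ss}$ are the images of those for $\A_0^{ss}$ under $\otimes\, H^\ast(J_k)\otimes H^\ast(B\U(1))$, since the normal data of an unstable stratum only sees the trace-free part plus a determinant twist), and hence the decomposition of $H^\ast(B\G)$ descends to the quotient, giving \eqref{E:tensor1}. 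For the Higgs case \eqref{E:tensor2} the same strategy applies using \cite{DWWW}: the long exact sequences \eqref{E:xd} for $\B(2,k)$ split by \cite[Sect.\ 4]{DWWW}, so $H^\ast_\G(\B^{ss}(2,k))$ is built inductively from the $H^\ast_\G(X_d,X_{d-1})$, and these stratum contributions are, by the computations \eqref{E:bb}--\eqref{E:bd} recalled above, of the form $H^{\ast-2\mu_d}(\eta_d)$ with $\eta_d$ a Hitchin space of split bundles whose cohomology again factors as (trace-free data)$\,\otimes H^\ast(J_k(M))\otimes H^\ast(B\U(1))$; assembling these and comparing with the fixed-determinant filtration of $H^\ast_{\G_0}(\B^{ss}_0(2,k))$ and its $\Gamma_2$-decomposition gives the result.

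The main obstacle I anticipate is the bookkeeping in the Higgs case: unlike \cite{AB}, the strata $Y_d$ for $\B_0(2,k)$ are singular and the sequences \eqref{E:xd} do \emph{not} split equivariantly for fixed determinant (that is the whole point of Proposition \ref{P:dww}), so one cannot naively say $H^\ast_{\G_0}(\B^{ss}_0(2,k))$ is the direct sum of stratum contributions. The correct statement is that the non-split part lives entirely in the $\varphi_\gamma$-isotypical pieces with $\gamma\neq 1$ (Lemma \ref{L:prym}, Proposition \ref{P:dww}), whereas on the $\Gamma_2$-invariant part the sequences \emph{do} split (by \cite[Cor.\ 4.27]{DWWW}), and it is precisely $H^\ast_{\G_0}(\B^{ss}_0(2,k))^{\Gamma_2}$ that appears in \eqref{E:tensor2}; meanwhile for the non-fixed determinant space the sequences split outright. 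So the argument must be organized isotypical-piece by isotypical-piece, using that $\G/\G_0$ acts trivially on the relevant stratum cohomologies except through the $J_k(M)$-factor, and that the non-split contributions for $\gamma\neq 1$ are killed upon tensoring from the invariant part — this matching of the failure-of-exactness on both sides is the delicate step, but it is exactly what \cite[Sect.\ 4.2]{DWWW} carries out, so I would cite it for the detailed verification.
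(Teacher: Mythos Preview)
The paper does not give a self-contained proof of this proposition; it is stated with a bare citation to \cite[Sect.\ 9]{AB} and \cite[Sect.\ 4.2]{DWWW}. However, the argument used in those references---and the one the paper itself spells out for the parallel Proposition \ref{P:tensor3} and again in the proof of Proposition \ref{P:gl}---is considerably more direct than yours. One simply observes that the tensoring map
\[
\B_0^{ss}(2,k)\times \A(1,0)\lra \B^{ss}(2,k)\ ,\quad \bigl((E,\Phi),L\bigr)\mapsto (E\otimes L,\Phi)
\]
(and the analogous map for $\A^{ss}$) becomes, after passing to the appropriate Borel constructions for $\G_0\times\G(1)$ and $\G$, a $\Gamma_2$-covering. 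Tensoring with a line bundle preserves semistability, so this map already lives over the semistable loci and no stratification or Kirwan-type argument is required: K\"unneth plus the transfer for the finite group $\Gamma_2$ over $\QBbb$ give the decomposition immediately.

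Your route via the fibration $B\G_0\to B\G\to B(\G/\G_0)$ is correct at the classifying-space level and does yield the displayed identity for $H^\ast(B\G)$, but the second half---descending to the semistable stratum by matching the Kirwan kernels, and in the Higgs case repairing the failure of splitting isotypical-piece by isotypical-piece---is exactly the machinery that the direct covering argument sidesteps. You end up citing \cite[Sect.\ 4.2]{DWWW} for the delicate step anyway, which is the same reference the paper invokes for the whole statement. The covering picture also makes the appearance of $\Gamma_2$ transparent (it is the deck group), whereas in your fibration picture the monodromy is a priori $\pi_1\bigl(B(\G/\G_0)\bigr)\cong H^1(M,\ZBbb)$ and you must still argue that it factors through $\Gamma_2$; your sentence ``$\Gamma_2$ sits inside $\G/\G_0$ as the $2$-torsion of the Jacobian factor'' does not by itself explain this factoring. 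So your approach is not wrong, but it is a substantial detour around a two-line argument.
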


A similar relationship holds for the equivariant cohomology of the representation varieties.  For example, we have a  $\Gamma_2$-cover given by
$$
\Hom(\pi,\SU(2))\times \Hom(\pi,\U(1))\lra \Hom(\pi,\U(2)) : (\rho, \sigma)\mapsto \rho\cdot \sigma
$$
Moreover, the action of $\Gamma_2$ on the left commutes with conjugation by $\SU(2)$ and acts trivially on the cohomology of $\Hom(\pi,\U(1))$, so
$$
H^\ast_{\SU(2)}(\Hom(\pi,\U(2)))\simeq H^\ast_{\SU(2)}(\Hom(\pi,\SU(2)))^{\Gamma_2}\otimes H^\ast(\Hom(\pi,\U(1))
$$
This works as well for $\SL(2,\CBbb)\subset \GL(2,\CBbb)$.
Since conjugation by the center  is trivial and
$J_0(M)\simeq\Hom(\pi,\U(1))$, we conclude

\begin{proposition} \label{P:tensor3}
The following hold:
\begin{align*}
H^\ast_{eq.}(\R(\pi))&\simeq H^\ast_{eq.}(\R_0(\pi))^{\Gamma_2}\otimes H^\ast(J_0(M))\otimes H^\ast(B\U(1)) \\
H^\ast_{eq.}(\X(\pi))&\simeq H^\ast_{eq.}(\X_0(\pi))^{\Gamma_2}\otimes H^\ast(J_0(M))\otimes H^\ast(B\U(1)) 
\end{align*}
\end{proposition}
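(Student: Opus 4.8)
The plan is to treat the $\R$-case and the $\X$-case in parallel, by realizing the non-fixed-determinant representation space as a free $\Gamma_2$-quotient of the product of the fixed-determinant one with a Jacobian torus, and then unwinding the equivariant cohomology. I will describe the $\U(2)$-case; the $\GL(2,\CBbb)$-case is identical after passing to maximal compact subgroups, which changes nothing since the inclusions $\U(2)\hookrightarrow\GL(2,\CBbb)$, $\SU(2)\hookrightarrow\SL(2,\CBbb)$, $\U(1)\hookrightarrow\CBbb^\ast$ are deformation retracts. The short exact sequence $1\to\ZBbb/2\to\SU(2)\times\U(1)\to\U(2)\to 1$, with $\ZBbb/2=\{\pm(I,1)\}$ and last map $(A,\lambda)\mapsto\lambda A$, induces
\[
\Hom(\pi,\SU(2))\times\Hom(\pi,\U(1))\lra\Hom(\pi,\U(2)),\qquad (\rho,\sigma)\mapsto\rho\otimes\sigma .
\]
I would verify three points: (i) the map is surjective, the obstruction to lifting $\phi\colon\pi\to\U(2)$ lying in $H^2(\pi;\ZBbb/2)=H^2(M;\ZBbb/2)=\ZBbb/2$ and equalling the mod $2$ reduction of $\deg\det\phi=0$; (ii) its fibers are torsors over $\Hom(\pi,\ZBbb/2)=H^1(M;\ZBbb/2)=\Gamma_2$, with $\gamma$ acting diagonally by $(\rho,\sigma)\mapsto(\gamma\rho,\gamma\sigma)$, $(\gamma\rho)(x)=\gamma(x)\rho(x)$, which is precisely the tensoring action of the Introduction and is free, being already free on the $\Hom(\pi,\U(1))$-factor; (iii) overall conjugation by $\U(2)$ acts on the product through $\SU(2)$-conjugation on the first factor alone, scalars being central. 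This identifies $\Hom(\pi,\U(2))$ with $\bigl(\Hom(\pi,\SU(2))\times\Hom(\pi,\U(1))\bigr)/\Gamma_2$ compatibly with conjugation.

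Next I would disentangle the structure groups. Conjugation makes the $\U(2)$-action on $\Hom(\pi,\U(2))$ and the $\SU(2)$-action on $\Hom(\pi,\SU(2))$ both factor through $\PU(2)$. Over $\QBbb$ a finite central kernel is invisible to equivariant cohomology, so $H^\ast_{eq.}(\R_0(\pi))=H^\ast_{\PU(2)}(\Hom(\pi,\SU(2)))$; and since $\U(2)$ is a double cover of $\PU(2)\times\U(1)$ (via $A\mapsto([A],\det A)$) on which the central $\U(1)$ acts trivially, the same remark together with the K\"unneth formula gives $H^\ast_{eq.}(\R(\pi))=H^\ast_{\PU(2)}(\Hom(\pi,\U(2)))\otimes H^\ast(B\U(1))$ --- this is where the extra $H^\ast(B\U(1))$ factor comes from. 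Applying the transfer to the free finite $\Gamma_2$-quotient of the first step, and then K\"unneth again ($\PU(2)$ acting trivially on $\Hom(\pi,\U(1))$), yields
\[
H^\ast_{\PU(2)}(\Hom(\pi,\U(2)))=\bigl(H^\ast_{\PU(2)}(\Hom(\pi,\SU(2)))\otimes H^\ast(\Hom(\pi,\U(1)))\bigr)^{\Gamma_2}.
\]
The one genuinely geometric input is that $\Gamma_2$ acts trivially on $H^\ast(\Hom(\pi,\U(1)))$: the space $\Hom(\pi,\U(1))$ is the torus $J_0(M)$, on which $\Gamma_2$ acts by translation by $2$-torsion points, hence homotopically trivially. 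So the invariants split off that factor, and assembling the pieces gives $H^\ast_{eq.}(\R(\pi))\simeq H^\ast_{eq.}(\R_0(\pi))^{\Gamma_2}\otimes H^\ast(J_0(M))\otimes H^\ast(B\U(1))$, and the $\X$-statement follows verbatim. (Alternatively one may combine Theorem \ref{T:eq2} with \eqref{E:tensor1}--\eqref{E:tensor2}, after checking that the identifications \eqref{E:reps} intertwine the ``tensoring by $2$-torsion line bundles'' and ``twisting by sign characters'' descriptions of the $\Gamma_2$-action, which they do because the Narasimhan--Seshadri and non-abelian Hodge correspondences are $\Gamma_2$-equivariant.)

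The step I expect to be the main obstacle is the bookkeeping in the group reduction: one must see that exactly one copy of $H^\ast(B\U(1))$ appears, which is why the comparison is cleanest when routed through $\PU(2)$ rather than carried out between $\U(2)$ and $\SU(2)$ directly. The other point deserving care, though not hard, is matching the $\Gamma_2$-action produced by the covering in the first step with the tensoring action fixed in the Introduction, so that the $H^\ast_{eq.}(\R_0(\pi))^{\Gamma_2}$ appearing here is the same object that enters Theorem \ref{T:main}. The remaining inputs --- triviality of $\Gamma_2$ on the Jacobian, irrelevance of finite central kernels over $\QBbb$, and transfer for free finite quotients --- are routine.
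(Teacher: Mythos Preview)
Your argument is correct and follows essentially the same route as the paper: realize $\Hom(\pi,\U(2))$ as a $\Gamma_2$-quotient of $\Hom(\pi,\SU(2))\times\Hom(\pi,\U(1))$, use that $\Gamma_2$ acts trivially on the cohomology of the Jacobian factor, and pick up the extra $H^\ast(B\U(1))$ from the fact that conjugation by the center is trivial. The paper compresses the group-reduction step into the single phrase ``since conjugation by the center is trivial,'' whereas you make the passage through $\PU(2)$ explicit; your version is more detailed but not substantively different.
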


\section{Action of the Torelli group on  equivariant cohomology}  \label{S:torelli}

\subsection{General construction} %Universal connections
  Let $P\to M$ be a principal bundle with compact structure group $G$.  The gauge group $\G=\Aut P$, may be regarded as the space of $G$-equivariant maps $P\to G$.  From \cite{AB} we have the following description of the classifying space of $\G$.
\begin{align}
B\G &= \Map_P(S,BG) \label{E:bg} \\
E\G &= \Map_G(P,EG) \label{E:eg}
\end{align}
where the subscript $P$ indicates the component of maps which pull-back $EG$ to $P$, and  $G$ indicates $G$-equivariant maps. 

 Let $\A_P$ denote the affine space of $G$-connections on $P$, and $T^\ast\A_P$ its cotangent space.  In the following, let ${\mathcal C}_P$ be either $\A_P$ or $T^\ast\A_P$.
 The gauge group $\G$ acts on ${\mathcal C}_P$, and the map ${\mathcal C}_P\times_\G E\G\to B\G$ induces an isomorphism as in \eqref{E:ag},
 \begin{equation} \label{E:k0}
A_\G:  H^\ast(B\G)\lra H^\ast_\G ({\mathcal C}_P)
 \end{equation}

Suppose now that $\phi: M\to M$ is a diffeomorphism with  a $G$-equivariant lift $\tilde\phi: P\to P$.   We then have the following induced maps.
\begin{align}
\phi_C &: {\mathcal C}_P\lra{\mathcal C}_P : \omega \mapsto \tilde\phi^\ast(\omega) \label{E:amap}  \\
\phi_B &: B\G \lra B\G : f \mapsto f\circ  \phi \label{E:bgmap} \\
\phi_E &: E\G \lra E\G : \tilde f \mapsto\tilde f\circ \tilde \phi \label{E:egmap}\\
\phi_\G &: \G \lra \G : g \mapsto  g\circ  \tilde\phi \label{E:gmap}
\end{align}
where in the \eqref{E:bgmap} and \eqref{E:egmap} we have used  \eqref{E:bg} and \eqref{E:eg}.  Note that $\phi_B$ gives an isomorphism on cohomology.
Combining \eqref{E:amap}, \eqref{E:egmap}, and \eqref{E:gmap},  we have  maps
\begin{align*}
\widehat\phi_C &: {\mathcal C}_P\times_\G E\G \to {\mathcal C}_P\times_\G E\G : (c,e)\mapsto (\phi_C(c), \phi_E(e)) \\
 \widehat\phi_E &: E\G\times_\G E\G \to E\G\times_\G E\G : (e_1, e_2) \mapsto (\phi_E(e_1), \phi_E(e_2))
 \end{align*}

The following result is well-known (cf.\ \cite{Fh}).  We include the proof here for the sake of completeness.
\begin{proposition} \label{P:equivariant}
For the action on cohomology we have: $A_\G\circ\phi_B^\ast=\widehat\phi_C^\ast\circ A_\G$.
\end{proposition}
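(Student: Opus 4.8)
The plan is to unwind the definitions of all the maps involved and show that the diagram
$$
\xymatrix{
H^\ast(B\G) \ar[r]^{A_\G} \ar[d]_{\phi_B^\ast} & H^\ast_\G({\mathcal C}_P) \ar[d]^{\widehat\phi_C^\ast} \\
H^\ast(B\G) \ar[r]^{A_\G} & H^\ast_\G({\mathcal C}_P)
}
$$
commutes, by producing a continuous map at the space level that induces everything. Recall that $A_\G$ is induced by the classifying map ${\mathcal C}_P\times_\G E\G\to B\G$; since ${\mathcal C}_P$ is contractible, this is a homotopy equivalence, and $A_\G$ is simply its inverse on cohomology together with the projection. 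So the first step is to observe that it suffices to check that the square
$$
\xymatrix{
{\mathcal C}_P\times_\G E\G \ar[r] \ar[d]_{\widehat\phi_C} & B\G \ar[d]^{\phi_B} \\
{\mathcal C}_P\times_\G E\G \ar[r] & B\G
}
$$
commutes (strictly, or at least up to homotopy), where the horizontal maps are the classifying maps. Passing to cohomology and inverting the (iso) horizontal arrows then yields the claimed identity $A_\G\circ\phi_B^\ast=\widehat\phi_C^\ast\circ A_\G$.

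Next I would identify the classifying map ${\mathcal C}_P\times_\G E\G\to B\G$ explicitly using the model \eqref{E:bg}, \eqref{E:eg}: an element of ${\mathcal C}_P\times_\G E\G$ is represented by a pair $(c,\tilde f)$ with $\tilde f: P\to EG$ a $G$-equivariant map, and the classifying map sends it to the induced map $S=P/G\to EG/G=BG$, i.e.\ to the class of $\tilde f$ in $\Map_P(S,BG)$ (the $c$-coordinate plays no role, consistent with contractibility of ${\mathcal C}_P$). Then one simply traces through: $\widehat\phi_C(c,\tilde f)=(\phi_C(c),\phi_E(\tilde f))=(\tilde\phi^\ast c,\ \tilde f\circ\tilde\phi)$; applying the classifying map gives the induced map on bases of $\tilde f\circ\tilde\phi$, which is precisely $(\text{class of }\tilde f)\circ\phi = \phi_B(\text{class of }\tilde f)$ by \eqref{E:bgmap}. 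So the square commutes on the nose once one has the right descriptions of the maps, and the proposition follows.

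The main obstacle — really the only subtlety — is bookkeeping with the two models of $E\G$ and $B\G$ from \cite{AB}: one must be careful that $\phi_E$ as defined in \eqref{E:egmap} really does descend to $\phi_B$ on the quotient $E\G/\G = B\G$ under the identification \eqref{E:bg}, i.e.\ that precomposition with $\tilde\phi$ on $\Map_G(P,EG)$ corresponds to precomposition with $\phi$ on $\Map_P(S,BG)$. This is essentially formal given that $\tilde\phi$ covers $\phi$ and is $G$-equivariant, but it requires checking that $\phi_\G$ (equation \eqref{E:gmap}) is a group automorphism compatible with the $\G$-action on $E\G$, so that the quotient maps are well-defined — i.e.\ that $\phi_E(g\cdot \tilde f) = \phi_\G(g)\cdot \phi_E(\tilde f)$, which is immediate from $(g\circ\tilde\phi)\circ\tilde\phi^{-1}$-type manipulations. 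Once this equivariance bookkeeping is in place, there is no hard analysis; the statement is purely a naturality property of the Borel construction.
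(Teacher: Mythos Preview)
Your proof is correct and in fact more direct than the paper's. The core of both arguments is the same: one must verify that the quotient map $E\G\to B\G$ intertwines $\phi_E$ with $\phi_B$ (the paper's equation \eqref{E:2}, your ``main obstacle''), and that $\phi_E$ is $\phi_\G$-equivariant so that $\widehat\phi_C$ descends to the Borel construction. You then simply observe that the projection $\pi':{\mathcal C}_P\times_\G E\G\to B\G$ onto the second factor already satisfies $\pi'\circ\widehat\phi_C=\phi_B\circ\pi'$, and since $A_\G=(\pi')^\ast$ the result is immediate.

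The paper instead introduces an intermediate space: using the Narasimhan--Ramanan universal connection $\Omega$ on $EG$, it defines $u:E\G\to{\mathcal C}_P$ by $\tilde f\mapsto\tilde f^\ast\Omega$ and the induced $\hat u:E\G\times_\G E\G\to{\mathcal C}_P\times_\G E\G$, then checks the additional identity $\hat u\circ\widehat\phi_E=\widehat\phi_C\circ\hat u$ and uses that $\hat u^\ast$ is an isomorphism to transfer the computation. This factorisation is not needed for the bare statement of the Proposition --- your diagram chase on $\pi'$ alone suffices --- but the map $u$ has independent interest: it realises the homotopy equivalence $B\G\simeq{\mathcal C}_P\times_\G E\G$ geometrically via connections rather than abstractly via contractibility, and the compatibility $u\circ\phi_E=\phi_C\circ u$ shows that the diffeomorphism action on connections really is the one induced from the action on classifying maps. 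So the paper's detour buys a more concrete, connection-theoretic picture, while your route is the minimal naturality argument.
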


\begin{proof}
The  universal connection $\Omega$ on $EG$ (cf.\ \cite{NR}) gives a  map
\begin{equation}
u: E\G = \Map_G(P,EG)\lra {\mathcal C}_P : \tilde f\mapsto  \tilde f^\ast(\Omega)\label{u}
\end{equation}
which is surjective onto $\A_P\subset{\mathcal C}_P$.
The map $u$ is clearly $\G$-equivariant.  Moreover, it induces an isomorphism on $\G$-equivariant cohomology.  Indeed, by $\G$-equivariance, the  map 
$$
\widehat u : E\G\times_\G E\G\lra {\mathcal C}_P\times_\G E\G : (e_1, e_2)\mapsto (u(e_1), \phi_E(e_2))
$$
must, by the contractibility of $E\G$ and ${\mathcal C}_P$, give an isomorphism on cohomology.
We claim that
$u\circ\phi_E = \phi_C \circ u$.
Indeed,
$$
u(\phi_E( \tilde f))=u( \tilde f\circ\tilde\phi)=( \tilde f\circ\tilde\phi)^\ast(\Omega)=\tilde\phi^\ast( \tilde f^\ast(\Omega))=\phi_C(u( \tilde f))
$$
It follows that 
\begin{equation} \label{E:1}
\hat u\circ\widehat\phi_E=\widehat\phi_C\circ \hat u
\end{equation}
On the other hand, we also have that
$\phi_E$ preserves $\G$-orbits and covers the  action $\phi_B$ of $\phi$ on $B\G$.
For if $f\in E\G$, $g\in \G$, then $\phi_E(gf)=\phi_\G(g)\phi_E(f)$.  
The map $[f]:M\to BG$ is defined $[f](s)=[f(\tilde s)]$ for any lift $\tilde s$ of $s$ to $P$.  Then the induced map on $B\G$ therefore sends $[f]$ to
$$
[\phi_E(f)](s)]=[\phi_E(f)(\tilde s)]=[f(\tilde\phi(\tilde s))]=[f(\widetilde{\phi(s)})]=[f](\phi(s))=\phi_B(f)(s)
$$
Hence, 
\begin{equation} \label{E:2}
\pi\circ\widehat\phi_E=\phi_B\circ\pi
\end{equation}
where $\pi: E\G\times_\G E\G\to B\G$ is projection to the second factor. Equations \eqref{E:1} and \eqref{E:2} imply
 $\hat u^\ast\circ A_\G\circ \phi_B^\ast=\hat u^\ast\circ \widehat\phi_A^\ast\circ A_\G$.  Since $\hat u^\ast$ is an isomorphism, the result follows.
%It follows that in  the commutative diagram
%$$
% \xymatrix{\ar[d]  E\G\times_\G E\G\  \ar[r]^{\hat u} & \ar[d]\ {\mathcal C}_P\times_\G E\G  \\
%   B\G  \ar[r]^{\sim} &  B\G }
% $$
% all the arrows intertwine the actions of $\Phi_C$, $\phi_B$, and $\Phi_E$ (and are isomorphisms on cohomology).  The result follows.
\end{proof}

\subsection{Action on moduli spaces} \label{S:action}%

We apply the construction of the previous section to the equivariant cohomology of the moduli spaces.  First, recall that $\mcg(M)$ is  the group of components of isotopy classes of diffeomorphisms.  For $p\in M$, choose a fixed disk neighborhood $D$ of $p$.  We define $\mcg(M,D)$ to be the subgroup of the group of components of isotopy classes of diffeomorphisms that are the identity on $D$ (where the isotopies are through diffeomorphisms that are the identity on $D$).  Since any diffeomorphism has a representative fixing $D$, the forgetful map $\mcg(M,D)\to \mcg(M)$ is surjective.  We define the subgroup $\Tor(M,D)$ of the Torelli group $\Tor(M)$ similarly.
For the  trivial $\SU(2)$  bundle $P\to M$, representatives $\phi$ of elements of $\mcg(M)$ trivially lift to bundle maps $\tilde \phi$ of $P$.    For a bundle with Chern class 1, we may fix trivializations on $D$ and on the complement of $D$, and then define lifts for representatives of elements in $\mcg(M,D)$.  

Consider the space $(T^\ast\A_0)^{flat}$.  Given an element $\phi\in \Diff(M)$, the result of the previous section gave a homeomorphism $\widehat\phi$ of  $(T^\ast\A_0)^{flat}\times_{\G_0} E\G_0$. 
Recall that $(T^\ast\A_0)^{flat}$ is the space of flat $\SL(2,\CBbb)$ connections.  Then $(T^\ast\A_0)^{flat}\times_{\G_0} E\G_0$ is invariant by $\widehat\phi$.  Isotopic diffeomorphisms $\phi$ give isotopic homeomorphisms $\widehat\phi$.  Hence, we have defined an action of $\mcg(M)$ on $H^\ast_{\G_0}((T^\ast\A_0)^{flat})$.  Now
$$
H^\ast_{\G_0}((T^\ast\A_0)^{flat})=H^\ast_{\SU(2)}((T^\ast\A_0)^{flat}\bigr/\G_0(p))\simeq H^\ast_{eq.}(\X_0(\pi))
$$
where the second identification comes from the holonomy map $\hol_p$ (see \eqref{E:hol}).  
Hence, there is an action of $\mcg(M,D)$ on $H^\ast_{eq.}(\X_0(\pi))$.  
If $\phi\in\Diff(M,D)$ and $D$ is a flat connection, then $\hol_p(\phi^\ast D)=\hol_p(D)\circ\phi_\ast$, where $\phi_\ast$ denotes the action on $\pi$.
Hence,  the action of $\mcg(M, D)$  agrees with the canonical action on 
$H^\ast_{eq.}(\X_0(\pi))$ by automorphisms of $\pi$.
  In particular, automorphisms in the kernel of the surjection $\mcg(M,D)\to\mcg(M)$ act trivially, and so there is a well-defined action of $\mcg(M)$ on $\X_0(\pi)$.

Mapping classes that do not preserve a complex structure on $M$ do not act in any natural way on  $\B^{ss}_0(2,0)$.  On the other hand,  let ${\mathcal T}(M)$ denote  the Teichm\"uller space ${\mathcal T}(M)$ of $M$.  Let
\begin{align*}
\BBbb^{ss}_0(2,0)&=\bigl\{ (E,\Phi, J) :  J \text{ complex structure on } M,\\
 &\qquad\qquad (E,\Phi)\to (M,J) \text{ semistable Higgs bundle}\bigr\}
\end{align*}
Let $\Diff_0(M)$ be the group of diffeomorphisms of $M$ isotopic to the identity, with the action on  $\BBbb^{ss}_0(2,0)$ given by  pulling back.  
Projection to the $J$-factor gives a locally trivial fibration
$$
\left\{\BBbb^{ss}_0(2,0)\times E\G_0\right\}\bigr/\Diff_0(M) \ltimes \G_0 \to {\mathcal T}(M)
$$
 with fiber homeomorphic to $\B^{ss}(2,0)\times_{\G_0}E\G_0$.  By the contractibility of ${\mathcal T}(M)$,
$$
H^\ast_{\G_0}(\BBbb^{ss}_0(2,0))\simeq
H^\ast_{\G_0}(\B^{ss}_0(2,0))
$$
The mapping class group $\mcg(M)$ acts on the left hand side and so defines an action on the equivariant cohomology $H^\ast_{\G_0}(\B^{ss}_0(2,0))$.

\begin{proposition}  \label{P:mcg}
The action of $\mcg(M)$ on $H^\ast_{\G}(\B^{ss}_0(2,0))$ agrees with the action on
$H^\ast_{eq.}(\X(\pi))$. 
\end{proposition}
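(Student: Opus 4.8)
The plan is to run the chain of homotopy equivalences underlying Theorem \ref{T:eq2} in families over Teichm\"uller space $\mathcal T(M)$, and to observe that every map in the chain respects the natural $\mcg(M)$-actions on these families. I will carry this out for $H^\ast_{\G_0}(\B^{ss}_0(2,0))$ and $H^\ast_{eq.}(\X_0(\pi))$; the non-fixed determinant statement then follows from \eqref{E:tensor2} and Proposition \ref{P:tensor3}. As in Section \ref{S:action} one works with $\mcg(M,D)$ and passes to $\mcg(M)$ at the end, and the two actions to be compared are: on the Higgs side, the one defined in Section \ref{S:action} via the fibration $\{\BBbb^{ss}_0(2,0)\times E\G_0\}/\Diff_0(M)\ltimes\G_0\to\mathcal T(M)$; on the character-variety side, the one on $H^\ast_{\G_0}((T^\ast\A_0)^{flat})\simeq H^\ast_{eq.}(\X_0(\pi))$ coming from the lifts $\tilde\phi$.

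First I would introduce the universal Hitchin space $\BBbb^H_0$ over $\mathcal T(M)$, consisting of solutions of $\mu_1(A,\Psi)=0$; by \eqref{E:bh} it equals $\{(A,\Psi,J):A+\Psi\text{ flat},\ \mu_3(A,\Psi)=0\}$, so it sits both inside $\BBbb^{ss}_0(2,0)$ and inside $(T^\ast\A_0)^{flat}\times\mathcal T(M)$ (the flatness of $A+\Psi$ does not depend on $J\in\mathcal T(M)$, only $\mu_3$ does). Since $\YMH$ and its $L^2$-gradient flow vary continuously with $J$, Wilkin's retraction \eqref{E:graeme} holds fiberwise and assembles into an $\mcg(M)$-equivariant map over $\mathcal T(M)$,
$$
\BBbb^H_0\bigr/\G_0(p)\ \hookrightarrow\ \BBbb^{ss}_0(2,0)\bigr/\G_0(p),
$$
which is an $\SU(2)$-equivariant deformation retract on each fiber, hence a homotopy equivalence; together with the contractibility of $\mathcal T(M)$ and of the complexification quotients (exactly as in the proof of Theorem \ref{T:eq2}) this yields an $\mcg(M)$-equivariant isomorphism $H^\ast_{\G_0}(\B^{ss}_0(2,0))\simeq H^\ast_{\SU(2)}(\BBbb^H_0/\G_0(p))$, the left-hand side carrying the Section \ref{S:action} action by definition. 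On the other side, the holonomy map \eqref{E:hol} identifies $\BBbb^H_0/\G_0(p)$, $\SU(2)$-equivariantly, with $\widehat{\Hi}:=\{(\rho,J):\rho\in\Hi(M,J)\}\subset\Hom(\pi,\SL(2,\CBbb))\times\mathcal T(M)$; this identification is $\mcg(M)$-equivariant because $\hol_p(\tilde\phi^\ast D)=\hol_p(D)\circ\phi_\ast$, because a lift of $\phi$ carries $\Hi(M,J)$ onto $\Hi(M,\phi^\ast J)$, and because the induced action on $\Hom(\pi,\SL(2,\CBbb))$ is the canonical one. Running the harmonic map flow of Proposition \ref{P:retract} simultaneously for all $J$ produces a strong deformation retraction of $\Hom(\pi,\SL(2,\CBbb))\times\mathcal T(M)$ onto $\widehat{\Hi}$, $\SU(2)\times\mcg(M)$-equivariant by naturality of harmonic maps under pullback of the complex structure; combined with the $\mcg(M)$-equivariant homotopy equivalence $\Hom(\pi,\SL(2,\CBbb))\times\mathcal T(M)\to\Hom(\pi,\SL(2,\CBbb))$ this identifies $H^\ast_{\SU(2)}(\widehat{\Hi})$ $\mcg(M)$-equivariantly with $H^\ast_{eq.}(\X_0(\pi))$. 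Composing these $\mcg(M)$-equivariant isomorphisms, and checking that restricting the whole diagram to the fiber over the fixed complex structure recovers precisely the isomorphism of Theorem \ref{T:eq2} (the fiber inclusions are compatible with all maps in the chain, though not $\mcg(M)$-equivariant themselves), shows that the isomorphism of Theorem \ref{T:eq2} intertwines the two $\mcg(M)$-actions.

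The step I expect to be the main obstacle is purely analytic: one must verify that the harmonic map flow of Proposition \ref{P:retract} and its limit depend continuously on the pair $(\rho,J)\in\Hom(\pi,\SL(2,\CBbb))\times\mathcal T(M)$ --- equivalently, that the retraction there can be performed continuously in the complex structure --- and likewise that Wilkin's $\YMH$-gradient-flow retraction of \eqref{E:graeme} is continuous in $J$. These are parametrized versions of facts already in place --- the convergence results of Corlette and Donaldson, the Bochner estimate of \cite{ES}, Hartman's uniqueness theorem \cite{Ha}, and the continuity of the retraction $r$ established within the proof of Proposition \ref{P:retract} --- now with $J$ ranging over the finite-dimensional contractible manifold $\mathcal T(M)$, and the arguments are of the same character as those already given there.
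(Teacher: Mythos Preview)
Your approach is essentially that of the paper, carried out in much greater detail: the paper's entire proof is the observation that the $\Diff(M,D)$-action is equivariant with respect to the embeddings \eqref{E:graeme} and \eqref{E:retraction} and commutes with $\SU(2)$, followed by an appeal to Theorem~\ref{T:eq2}. What you have written is a careful unpacking of that sentence via universalization over $\mathcal T(M)$.

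One comment on the step you flag as the main obstacle. Because you construct universal \emph{retractions} over $\mathcal T(M)$, you are led to require continuity in $J$ of the harmonic map flow and of the $\YMH$-flow. The paper's formulation sidesteps this: it uses only the naturality of the \emph{embeddings} \eqref{E:graeme} and \eqref{E:retraction} under pull-back by diffeomorphisms (so that $\phi^\ast$ carries the diagram of Theorem~\ref{T:eq2} for $J$ to the same diagram for $\phi^\ast J$), together with the already-asserted local triviality of $\{\BBbb^{ss}_0(2,0)\times E\G_0\}/\Diff_0(M)\ltimes\G_0\to\mathcal T(M)$. Since each fiberwise embedding is known to be a homotopy equivalence, this naturality suffices to match the two actions on cohomology without ever assembling the retractions into a continuous family. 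Your parametrized argument is correct, but more than is strictly needed.
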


\begin{proof}
It suffices to check the action of $\Diff(M,D)$.  Clearly, this is
equivariant with respect to the embeddings \eqref{E:graeme} and \eqref{E:retraction}, and it commutes with the action of $\SU(2)$.  The result then follows from Theorem \ref{T:eq2}.
\end{proof}

A similar construction holds for the non-fixed determinant cohomology $H^\ast_{\G}(\B^{ss}(2,0))$. 
For $k=1$,  Notice that  the condition of determinant ${\mathcal O}[p]$, $p\in D$,  is preserved by $\Diff_0(M,D)$. 
Consider the universal space of semistable Higgs pairs:
\begin{align*}
\BBbb^{ss}_0(2,1)&=\bigl\{ (E,\Phi, J) :  J \text{ complex structure on } M,\ (E,\Phi)\to (M,J) \text{ a semistable}\\
 &\hskip1.5in  \text{Higgs bundle},\ \det E={\mathcal O}[p]\, \bigr\} 
 \end{align*}
Projection to the $J$-factor gives a locally trivial fibration
$$
\left\{\BBbb^{ss}_0(2,1)\times E\G_0\right\}\bigr/\Diff_0(M,D) \ltimes \G_0 \to \widetilde{\mathcal T}(M)
$$
 with fiber homeomorphic to $\B^{ss}_0(2,1)\times_{\G_0}E\G_0$.  Here,
 $\widetilde {\mathcal T}(M)$ fibers over ${\mathcal T}(M)$ with fiber\break $\Diff_0(M)/\Diff_0(M,D)$.  Since $\widetilde {\mathcal T}(M)$ is also contractible, the analogue of Proposition \ref{P:mcg} holds in this case as well.  A similar construction holds for the non-fixed determinant cohomology $H^\ast_{\G}(\B^{ss}(2,1))$.
 
 Clearly, the fiberwise action of $\Gamma_2$ commutes with the action of $\Tor(M)$ defined above.  Also,
 let ${\mathbb J}_0(M)\to {\mathcal T}(M)$  denote the universal Jacobian variety and 
$T^\ast_v{\mathbb J}_0(M)$ the vertical cotangent space.  The trace map 
$\overline T$ described in \cite[Sect.\ 4.2]{DWWW}  extends fiberwise to give a fibration 
 $$
 \xymatrix{
\ar[d] \left\{\BBbb^{ss}(2,0)\times E\G\right\}\bigr/\Diff_0(M,D) \ltimes \G_0 \ar[r]^{\hskip.8in \overline {\mathbb T}}& \ar[d]^{\pi} T^\ast_v{\mathbb J}_0(M) \\
{\mathcal T}(M)\ar[r]^{\sim}&{\mathcal T}(M)
}
 $$
 with fiber over $\ell$ given by
 $
 \left(\left\{\BBbb^{ss}_0(2,0)\times E\G_0\right\}\bigr/\Diff_0(M,D) \ltimes \G_0 \right) \bigr|_{\pi(\ell)}
 $.
 Then $\overline {\mathbb T}$ is  equivariant with respect to the action of $\mcg(M)$ defined above, and the action by pull-back on $T^\ast_v{\mathbb J}_0(M)$.  A similar construction holds for $k=1$.
The following is immediate (cf.\ \eqref{E:tensor2}).
 
 \begin{proposition} \label{P:decomp}
 The action of $\Tor(M)$ $($resp.\ $\Tor(M,D)$$)$ preserves the decomposition in \eqref{E:tensor2}.
 \end{proposition}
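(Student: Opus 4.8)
\textbf{Proof proposal for Proposition \ref{P:decomp}.}

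The plan is to deduce the statement directly from the fibered constructions set up immediately above together with the naturality of the decomposition \eqref{E:tensor2}. Recall that \eqref{E:tensor2} arises precisely from the $\Gamma_2$-action by tensoring with $2$-torsion line bundles, and that the second and third tensor factors $H^\ast(J_k(M))\otimes H^\ast(B\U(1))$ come from the trace map $\overline T$ of \cite[Sect.\ 4.2]{DWWW}. So the key point is that both the $\Gamma_2$-action and the isomorphism realizing \eqref{E:tensor2} are fiberwise over Teichm\"uller space, and the $\mcg(M)$-action (in particular the $\Tor(M)$- and $\Tor(M,D)$-actions) was defined so as to be compatible with these fibered structures.

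Concretely, first I would observe that the fibration $\overline{\mathbb T}$ displayed above is $\mcg(M)$-equivariant with respect to the action defined on $\left\{\BBbb^{ss}(2,0)\times E\G\right\}\bigr/\Diff_0(M,D)\ltimes\G_0$ and the pull-back action on the vertical cotangent bundle $T^\ast_v{\mathbb J}_0(M)$ of the universal Jacobian; this is stated just before the Proposition. Passing to cohomology and using the contractibility of $\mathcal{T}(M)$, the Leray--Hirsch type splitting induced by $\overline{\mathbb T}$ identifies $H^\ast_{\G}(\B^{ss}(2,0))$ with $H^\ast_{\G_0}(\B^{ss}_0(2,0))\otimes H^\ast(J_0(M))\otimes H^\ast(B\U(1))$, and by equivariance of $\overline{\mathbb T}$ the $\Tor(M)$-action on the left corresponds to the tensor-product action on the right. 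Next, the $\Gamma_2$-action by tensoring commutes with $\Diff_0(M,D)\ltimes\G_0$ and hence descends to the total space of $\overline{\mathbb T}$, covering the translation action of $\Gamma_2$ on the fibers of $T^\ast_v{\mathbb J}_0(M)$; since $\Gamma_2$ acts trivially on $H^\ast(J_0(M))\otimes H^\ast(B\U(1))$ (the translations are homotopic to the identity on the Jacobian), taking $\Gamma_2$-invariants produces exactly the decomposition \eqref{E:tensor2}. Finally, because the $\Gamma_2$-action and the $\Tor(M)$-action commute (they act through commuting factors $\Diff_0(M,D)\ltimes\G_0$ and the $2$-torsion tensoring, as already noted), the $\Tor(M)$-action preserves the $\Gamma_2$-isotypical decomposition, and in particular preserves the invariant summand $H^\ast_{\G_0}(\B^{ss}_0(2,0))^{\Gamma_2}$ and the complementary tensor factors. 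The case $k=1$ is identical, using $\widetilde{\mathcal T}(M)$ in place of $\mathcal{T}(M)$ and the analogous fibration.

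I expect the only real point requiring care is verifying that the trace-map fibration $\overline{\mathbb T}$ genuinely extends to a $\mcg(M)$-equivariant map over all of Teichm\"uller space and that the induced cohomology splitting is the one appearing in \eqref{E:tensor2}; once that identification is in place, the commutativity of the two actions is formal and the conclusion is immediate. The remark ``The following is immediate (cf.\ \eqref{E:tensor2})'' in the excerpt suggests the authors intend precisely this short argument, so I would keep the proof to the two observations above: (i) $\Tor(M)$ acts fiberwise and commutes with $\Gamma_2$, hence preserves the isotypic decomposition; (ii) $\overline{\mathbb T}$-equivariance identifies this with the decomposition \eqref{E:tensor2}.
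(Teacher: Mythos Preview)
Your proposal is correct and matches the paper's intent: the paper gives no explicit proof, stating only ``The following is immediate (cf.\ \eqref{E:tensor2}),'' and the immediacy is precisely the two observations you isolate---$\mcg(M)$-equivariance of the universal trace fibration $\overline{\mathbb T}$ and the fiberwise commutation of the $\Gamma_2$-action with $\Tor(M)$. Your write-up is a faithful unpacking of that one-line justification.
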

 
We now draw  some consequences from this set-up.  First, we have

\begin{proposition} \label{P:nonfixeddet}
The action of $\Tor(M)$ $($resp.\ $\Tor(M, D)$$)$ on $H^\ast_\G(\B^{ss}(2,0))\simeq H^\ast_{eq.}(\X(\pi))$ \break $($resp.\  $H^\ast_\G(\B^{ss}(2,1))$$)$ is trivial.
\end{proposition}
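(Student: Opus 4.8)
The plan is to deduce the triviality from Kirwan surjectivity in the non-fixed determinant case together with the triviality of the Torelli action on $H^\ast(B\G;\QBbb)$. By the results of \cite{DWWW} recalled in Section \ref{S:eq}, the long exact sequences \eqref{E:xd} split, so the Kirwan map $k_\G\colon H^\ast(B\G)\to H^\ast_\G(\B^{ss}(2,k))$ of \eqref{E:kirwan} is surjective. Granting (i) that $k_\G$ intertwines the $\Tor(M)$-action (resp.\ the $\Tor(M,D)$-action when $k=1$) on $H^\ast_\G(\B^{ss}(2,k))$ of Section \ref{S:action} with the action on $H^\ast(B\G)$ induced by $\phi_B$ in \eqref{E:bgmap}, and (ii) that this latter action is trivial, surjectivity of $k_\G$ forces the action on $H^\ast_\G(\B^{ss}(2,k))$ to be trivial; Theorem \ref{T:eq2} and the non-fixed determinant analogue of Proposition \ref{P:mcg} then transport this to $H^\ast_{eq.}(\X(\pi))$. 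Thus everything reduces to (i) and (ii).

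For (ii) I would invoke the Atiyah--Bott description of $H^\ast(B\G;\QBbb)$. Let $\mathrm{ev}\colon M\times B\G\to BG$ be the evaluation map associated to $B\G=\Map_P(M,BG)$ via \eqref{E:bg}, and for a generator $c\in H^\ast(BG;\QBbb)$ write the K\"unneth decomposition $\mathrm{ev}^\ast c=\sum_j e_j\otimes c(e_j)$ with $\{e_j\}$ a homogeneous basis of $H^\ast(M;\QBbb)$; by \cite{AB} the classes $c(e_j)$, as $c$ runs over the generators of $H^\ast(BG;\QBbb)$, freely generate $H^\ast(B\G;\QBbb)$ as a graded-commutative algebra. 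From the definition $\phi_B(f)=f\circ\phi$ one gets $\mathrm{ev}\circ(\mathrm{id}_M\times\phi_B)=\mathrm{ev}\circ(\phi\times\mathrm{id}_{B\G})$, and comparing K\"unneth components shows that $\phi_B^\ast$ acts on the span of the $c(e_j)$ through the action of $\phi^\ast$ on $H^\ast(M;\QBbb)$. An orientation-preserving diffeomorphism representing an element of $\Tor(M)$ fixes $H^1(M;\ZBbb)$, hence all of $H^\ast(M;\QBbb)$, so it fixes every generator $c(e_j)$; being an algebra homomorphism, $\phi_B^\ast$ is then the identity on $H^\ast(B\G;\QBbb)$. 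Since $\Tor(M,D)\subset\Tor(M)$, the same applies for the gauge group of the degree $1$ bundle.

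For (i) I would run the construction of Section \ref{S:action} with the space $\BBbb(2,k)$ of \emph{all} Higgs pairs over the Teichm\"uller space $\mathcal{T}(M)$ (resp.\ $\widetilde{\mathcal T}(M)$) in place of $\BBbb^{ss}(2,k)$: the resulting fibration over the contractible base has fiber $\B(2,k)\times_\G E\G\simeq B\G$, using contractibility of $\B(2,k)$, so its cohomology is $H^\ast(B\G)$ equipped with the pull-back $\mcg(M)$-action, and by \eqref{E:bgmap} and Proposition \ref{P:equivariant} this action is exactly $\phi_B^\ast$. The inclusion of the semistable sub-family into this family is $\mcg(M)$-equivariant and induces $k_\G$ on cohomology, which is precisely the compatibility asked for in (i). I expect step (i)---checking that this ``universal'', Teichm\"uller-space version of the Kirwan map is genuinely $\mcg(M)$-equivariant once source and target carry their respective natural actions---to be the main technical obstacle; once it is in place, (ii) together with Kirwan surjectivity makes the conclusion immediate, and the $k=1$ case goes through verbatim with $\Tor(M,D)$ throughout.
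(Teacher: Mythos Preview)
Your proposal is correct and follows essentially the same approach as the paper: Kirwan surjectivity for $\B^{ss}(2,k)$ from \cite{DWWW}, equivariance of the Kirwan map via Proposition \ref{P:equivariant} (together with the Teichm\"uller-space construction of Section \ref{S:action}, which is exactly your point (i)), and triviality of the Torelli action on $H^\ast(B\G)$, which the paper simply cites to \cite{AB} rather than spelling out the generator argument you give in (ii). The technical point you flag in (i) is precisely what the paper packages into the one-line appeal to Proposition \ref{P:equivariant}.
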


\begin{proof}
Let $\imath : \B^{ss}(2,0)\hookrightarrow T^\ast\A(2,0)$ denote the inclusion.
By Proposition \ref{P:equivariant}, the Kirwan map $k_\G=\imath^\ast\circ A_\G$ is equivariant.  On the other hand, by
\cite[Thm.\ 4.1]{DWWW}, it is also surjective (see the discussion following Theorem \ref{T:ab} above).  Hence, the triviality comes from the triviality of the action on the cohomology of $B\G$ (see \cite{AB}) and Proposition \ref{P:equivariant}.
\end{proof}

%Since $\Tor(M)$ acts trivially on $H^1(M,\ZBbb)$,  it commutes with the action of $\Gamma_2$ on $\B_0(2,k)$.  Hence, 
% the decomposition in  \eqref{E:tensor2} is fixed by the action of $\Tor(M)$.
  The next result  follows from \eqref{E:tensor2} and  Propositions \ref{P:nonfixeddet} and \ref{P:decomp}.

\begin{corollary} \label{C:invariant}
The action of $\Tor(M)$ $($resp.\ $\Tor(M,D)$$)$ is  trivial on the $\Gamma_2$-invariant part of \break
$H^\ast_{\G_0}(\B^{ss}_0(2,0))$ $($resp.\ $H^\ast_{\G_0}(\B^{ss}_0(2,1))$$)$.
\end{corollary}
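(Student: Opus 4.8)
The plan is to deduce the statement directly from the $\Gamma_2$-splitting \eqref{E:tensor2} together with Propositions \ref{P:nonfixeddet} and \ref{P:decomp}, the point being to promote the known triviality in the non-fixed determinant case to triviality on the $\Gamma_2$-invariant part in the fixed determinant case. First I would pin down the shape of the Torelli action on the right-hand side of \eqref{E:tensor2}. The group $\Tor(M)$ acts trivially on $H_1(M,\ZBbb)$, hence on $H^1(M,\QBbb)$, and since the rational cohomology of $J_k(M)$ is the exterior algebra on $H^1(M,\QBbb)$, the action on $H^\ast(J_k(M))$ is trivial; the action on $H^\ast(B\U(1))$ is trivial as well (the center plays no role). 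Combined with Proposition \ref{P:decomp}, which says the $\Tor(M)$-action (resp.\ $\Tor(M,D)$-action) respects the tensor decomposition \eqref{E:tensor2} — this is encoded geometrically by the equivariance of the fiberwise trace fibration $\overline{\mathbb T}$ — it follows that the induced action on $H^\ast_{\G}(\B^{ss}(2,k))$ is of product type $\phi\otimes\mathrm{id}\otimes\mathrm{id}$, where $\phi$ is the action on the factor $H^\ast_{\G_0}(\B^{ss}_0(2,k))^{\Gamma_2}$.

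Next I would invoke Proposition \ref{P:nonfixeddet}: for $k=0$ the action of $\Tor(M)$ on $H^\ast_{\G}(\B^{ss}(2,0))$ is trivial, and for $k=1$ the action of $\Tor(M,D)$ on $H^\ast_{\G}(\B^{ss}(2,1))$ is trivial. Transporting this across the isomorphism \eqref{E:tensor2} shows that $\phi\otimes\mathrm{id}\otimes\mathrm{id}=\mathrm{id}$. To conclude that $\phi$ itself is trivial, I would restrict to the subspace $H^\ast_{\G_0}(\B^{ss}_0(2,k))^{\Gamma_2}\otimes 1\otimes 1$, using that $H^0(J_k(M))\otimes H^0(B\U(1))=\QBbb\neq 0$ so that the map $v\mapsto v\otimes 1\otimes 1$ is injective: for every $g$ and every class $v$ in the first factor, $\phi(g)(v)\otimes 1\otimes 1=(\phi\otimes\mathrm{id}\otimes\mathrm{id})(g)(v\otimes 1\otimes 1)=v\otimes 1\otimes 1$, hence $\phi(g)(v)=v$. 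This is exactly the assertion of the corollary.

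The only delicate point — and thus the "main obstacle," such as it is — is the first step: justifying that the Torelli action on the right-hand side of \eqref{E:tensor2} genuinely splits as a product with \emph{trivial} action on the $H^\ast(J_k(M))$ and $H^\ast(B\U(1))$ factors. This is precisely what Proposition \ref{P:decomp} (equivariance of $\overline{\mathbb T}$ over $\mathcal T(M)$, resp.\ $\widetilde{\mathcal T}(M)$) supplies once one notes the triviality of the Torelli action on the cohomology of the Jacobian; after that, the argument is purely formal linear algebra on tensor products, and no further analytic input is required.
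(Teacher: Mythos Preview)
Your argument is correct and follows exactly the route the paper takes: the paper states only that the corollary ``follows from \eqref{E:tensor2} and Propositions \ref{P:nonfixeddet} and \ref{P:decomp},'' and you have simply supplied the linear-algebra details (trivial Torelli action on $H^\ast(J_k(M))$ and $H^\ast(B\U(1))$, product form of the action via Proposition \ref{P:decomp}, and the injection $v\mapsto v\otimes 1\otimes 1$) that the paper leaves implicit.
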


\subsection{Proof of Theorem \ref{T:main}}
As in Section \ref{S:eq}, we assume a homology basis is fixed so that we have an 
identification $\Gamma_2\simeq \widehat\Gamma_2$. 
 We may universalize the description of the stratification in Section
 \ref{S:eq} over Teichm\"uller space.  In particular,  the critical sets 
$\eta_d$, stable manifolds $X_d$, and the spaces $\nu_d^-$, $\nu^{\prime}_d$, and $\nu^{\prime\prime}_d$, 
as the complex structure of $M$ varies,
are all invariant by the action of the mapping class group described in the previous section.  Hence, as above this gives an action of the mapping class group on the equivariant cohomology of these spaces.  
Moreover, this action commutes with the action of $\Gamma_2$.  With this understood, we have the following (cf.\ Proposition \ref{P:dww}  (b)).

\begin{lemma} \label{L:alpha}
For $q\in S$ and $\gamma\in \Gamma_2$, $(\ker\alpha^{6g-6-q+1})_{\varphi_\gamma} \simeq V(q,\gamma)$ as representations of $\Tor(M)$.
\end{lemma}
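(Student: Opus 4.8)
The plan is to upgrade the vector-space identification $\ker\alpha^{6g-6-q+1}\cong\bigoplus_{1\neq\gamma\in\Gamma_2}V(q,\gamma)$, which is already supplied by Proposition \ref{P:dww}(b), to an isomorphism of $\Tor(M)$-modules by running through the proof of that proposition while tracking equivariance. As noted just before the statement, the critical sets $\eta_d$, the stable manifolds $X_d$, and the spaces $\nu_d^-,\nu_d',\nu_d''$ all fit into a family over Teichm\"uller space on which $\mcg(M)$ acts as in Section \ref{S:action}, compatibly with every map in the diagram \eqref{E:xdb} and commuting with the $\Gamma_2$-action. Hence the horizontal and vertical long exact sequences in \eqref{E:xdb} are sequences of modules over both $\mcg(M)$ and $\Gamma_2$, and since passage to a $\varphi$-isotypical summand is exact and compatible with the (commuting) $\Tor(M)$-action, it suffices to follow the chain of isomorphisms in the single relevant degree, at $d=d_q$ where $2d_q=2g-2-q$.

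First I would note that $\ker\alpha^{6g-6-q+1}\cong\ker\zeta^{6g-6-q+1}$ from \eqref{E:kernel} is $\mcg(M)$-equivariant, being assembled from the connecting and restriction maps of \eqref{E:xdb}. Restricting to the $\varphi_\gamma$-isotypical part for $\gamma\neq1$ and using the exact sequence
$$0\lra\ker\delta^{6g-6-q}\lra H^{6g-6-q}_{\G_0}(\nu_d',\nu_d'')\lra\ker\zeta^{6g-6-q+1}\lra0$$
together with the fact that $\Gamma_2$ acts trivially on $\ker\delta^{6g-6-q}$ (so its $\varphi_\gamma$-summand vanishes, by \eqref{E:bb}), one obtains a $\Tor(M)$-equivariant isomorphism $(\ker\zeta^{6g-6-q+1})_{\varphi_\gamma}\cong H^{6g-6-q}_{\G_0}(\nu_d',\nu_d'')_{\varphi_\gamma}$. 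Finally, by \eqref{E:bd} and Lemma \ref{L:prym}, at $d=d_q$ the degree shift brings $H^{6g-6-q}_{\G_0}(\nu_{d_q}',\nu_{d_q}'')$ to $H^q(\widetilde\Sym^q M)$, whose $\varphi_\gamma$-summand is $H^q(\Sym^q M,\LL_{\varphi_\gamma})=\Lambda^q H^1(M,L_\gamma)$ (see \cite[p.\ 98]{H}). For $\gamma=1$ both sides vanish by \cite[Cor.\ 4.27]{DWWW}, hence are trivially isomorphic as $\Tor(M)$-modules, so the content is in the case $\gamma\neq1$.

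The crux — the one step that is not bookkeeping — is to identify the $\mcg(M)$-action thus transported onto $\Lambda^q H^1(M,L_\gamma)$ with the alternating Prym representation $V(q,\gamma)=\Lambda^q W^-_\gamma$ of \eqref{E:altprym}. Here I would exploit that the cover $\widetilde\Sym^n M\to\Sym^n M$ and the flat line bundles $\LL_\varphi$ depend only on $\Gamma_2=H^1(M;\ZBbb/2)$ \eqref{E:gamma}, not on the complex structure, so they are carried along the Teichm\"uller family by the \emph{topological} action of $\mcg(M)$, and that the identification $H^q(\Sym^q M,\LL_{\varphi_\gamma})\cong\Lambda^q H^1(M,L_\gamma)$ is natural in the surface. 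It then remains to recall that a diffeomorphism $\phi$ representing a class in $\Tor(M)$ acts trivially on $H_1(M;\ZBbb/2)$, hence fixes $\gamma$, preserves the double cover $M_\gamma\to M$, and lifts; the two lifts differ by the deck involution $\sigma$ and so act identically on the $\sigma$-anti-invariant subspace $W^-_\gamma$, which is canonically $H^1(M,L_\gamma)$. This is precisely $\Pi_\gamma$ of \eqref{E:prym}, and since $q$ is even the $\pm I$-ambiguity disappears on $\Lambda^q$, yielding $V(q,\gamma)$. I expect the main obstacle to be making this rigorous at the level of \cite{DWWW}: one must verify that the isomorphism $H^{\ast}_{\G_0}(\nu_d',\nu_d'')\cong H^{\ast-2\mu_d}(\widetilde\Sym^{2g-2-2d}M)$ of \cite[eq.\ (12) and Sect.\ 4.2]{DWWW}, and the chain of maps in \eqref{E:xdb} leading to it, are natural with respect to the Teichm\"uller family, so that the transported $\mcg(M)$-action on $\Lambda^q H^1(M,L_\gamma)$ genuinely coincides with the topological one.
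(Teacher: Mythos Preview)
Your proposal is correct and follows essentially the same approach the paper indicates: the paper gives no explicit proof of Lemma~\ref{L:alpha}, but states it with a ``cf.\ Proposition~\ref{P:dww}(b)'' after the paragraph explaining that the stratification and the spaces $\nu_d^-,\nu_d',\nu_d''$ universalize over Teichm\"uller space with an $\mcg(M)$-action commuting with $\Gamma_2$. Your argument spells out precisely what this ``cf.''\ entails, tracing the chain $\ker\alpha\cong\ker\zeta\cong H^{6g-6-q}_{\G_0}(\nu_{d_q}',\nu_{d_q}'')_{\varphi_\gamma}\cong\Lambda^q W_\gamma^-$ equivariantly and identifying the induced $\Tor(M)$-action with $\Pi_\gamma$; you also correctly flag the one point the paper leaves implicit, namely that the identifications of \cite[eq.\ (12) and Sect.\ 4.2]{DWWW} are natural over the Teichm\"uller family.
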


Part (1) of Theorem \ref{T:main} is the precisely the statement in Corollary \ref{C:invariant}.
For part (2), 
fix $q\in S$ and  $1\neq\gamma\in\Gamma_2$. 
 Consider the horizontal long exact sequence in \eqref{E:xdb}.  By 
Proposition \ref{P:dww} (2a), \cite{AB}, and Proposition \ref{P:equivariant}, it 
follows that  $H^{6g-6-q}(X_d)_{\varphi_\gamma}=\{0\}$ for $d\geq d_q$.  
By Proposition \ref{P:dww} (2b) and Lemma \ref{L:alpha}, it follows that 
$
H^{6g-6-q}(X_{d_q-1})_{\varphi_\gamma}\simeq V(q,\gamma)
$.  Finally,  for $d< d_q$, \eqref{E:xdb} is exact by Proposition \ref{P:dww} (2c). 
 Also, by Lemma \ref{L:beta}, $\Gamma_2$ acts trivially on the image of $\alpha^{6g-6-q}$, and so

$$
H^{6g-6-q}(X_{d})_{\varphi_\gamma}\simeq H^{6g-6-q}(X_{d-1})_{\varphi_\gamma}
$$
It follows that $
H^{6g-6-q}(X_{d})_{\varphi_\gamma}\simeq V(q,\gamma)
$ for all $d< d_q$.  This proves part (2).
Finally, part (4)  follows from Proposition \ref{P:dww} (1), \cite{AB}, and Proposition \ref{P:equivariant}.

\subsection{Odd degree Hitchin space}  %Action on the Hitchin space
Let
$0\to\ZBbb\to\tilde\pi\to\pi\to1$
be the universal central extension of $\pi=\pi_1(M)$.  In terms of a symplectic basis $\{a_i,b_i\}_{i=1}^g$, we have the following presentations.
\begin{align*}
\pi&=\Bigl<  a_i, b_i, i=1,\ldots , g : \prod_{i=1}^g[a_i, b_i]=1\Bigr> \\
\tilde\pi&=\Bigl<  a_i, b_i, c,  i=1,\ldots , g : \prod_{i=1}^g[a_i, b_i]=c \ , \text{ $c$ central}\Bigr>
\end{align*}
Set
\begin{align*} 
\R_o(\tilde \pi)&=\left\{ \rho: \tilde \pi\to \SU(2) : \rho(c)=-I\right\}\bigr/\SU(2)\\
\X_o(\tilde \pi)&=\left\{ \rho: \tilde \pi\to \SL(2,\CBbb) : \rho(c)=-I\right\}\bigr/\SL(2,\CBbb)
\end{align*}
Then we have the following identifications of smooth real analytic varieties (cf.\ \cite{AB,H}).
\begin{equation} \label{E:tilde}
\R_o(\tilde\pi)=\N_0(2,1) \ ,\
\X_o(\tilde\pi)=\M_0(2,1) 
\end{equation}
The group $\Tor(M,D)$ acts by outer automorphisms on $\pi$, and this action lifts to $\tilde\pi$.  Hence, there is an action of $\Tor(M,D)$ on $\R_o(\tilde\pi)$ and $\X_o(\tilde\pi)$, and one can verify that with respect to the identifications above this corresponds to the actions on  the equivariant cohomology of the spaces $\A^{ss}_0(2,1)$ and $\B^{ss}_0(2,1)$, respectively.

There is a free action of $\Gamma_2$ on $\R_o(\tilde\pi)$ and $\X_o(\tilde\pi)$ as before, and it is clear that the orbit of a representation under $\Gamma_2$ consists of all possible lifts of the associated projective representation.  Hence,
\begin{equation} \label{E:tildegamma}
\widehat\R_o(\pi)=\R_o(\tilde\pi)\bigr/\Gamma_2 \ ,\
\widehat\X_o(\pi)=\X_o(\tilde\pi)\bigr/\Gamma_2
\end{equation}
(see \eqref{E:even}).

Consider an unramified double cover $M_\gamma\to M$ as in the Introduction. Suppose without loss of generality that $D$ is covered by disjoint disks $\widetilde D_i$, $i=1,2$, in $M_\gamma$.  Then  for $f\in \Tor(M,D)$ there is a \emph{unique} lift to $M_\gamma$ that is the identity on the $\widetilde D_i$.  Hence, the Prym representation  \eqref{E:prym}  gives a well-defined homomorphism
$
\widetilde \Pi_\gamma : \Tor(M,D)\lra \Sp(W^-_\gamma, \ZBbb)
$.  
We also have induced representations of $\Tor(M,D)$ on \eqref{E:altprym}, now for the case where $q$ is odd as well.  As in \eqref{E:prymtorelli} we define
\begin{equation} \label{E:prymtorelli2}
\PTor(M,D)=\bigcap_{1\neq \gamma\in \Gamma_2} \ker\widetilde \Pi_\gamma
\end{equation}

With this understood, we state the following analogue of Theorem \ref{T:main}.

\begin{theorem} \label{T:main2}
 \begin{enumerate}
 % \item $\PTor(M,D)$ acts trivially  on  $H^\ast(\Rep_o(\tilde\pi, \SL(2,\CBbb)))$.
 \item $\Tor(M,D)$ acts trivially on $H^\ast(\X_o(\tilde\pi))^{\Gamma_2}$.
 \item For $q\in \widetilde S=\{2j-1\}_{j=1}^{g-1}$  the action of $\Tor(M)$ splits as
 $$
 H^{6g-6-q}(\X_o(\tilde\pi))=H^{6g-6-q}(\X_o(\tilde\pi))^{\Gamma_2}\oplus \bigoplus_{1\neq\gamma\in \Gamma_2} V(q,\gamma)
$$
  In particular, $\PTor(M,D)$ acts trivially and $\Tor(M,D)$ acts non-trivially  on  $H^\ast(\X_o(\tilde\pi))$. The splitting of the sum of $V(q,\gamma)$'s is canonically determined by a choice of homology basis of $M$.
 \item $\Tor(M,D)$ acts trivially on $ H^{6g-6-q}(\X_o(\tilde\pi))$ for $q\not\in \widetilde S$.
\end{enumerate}
\end{theorem}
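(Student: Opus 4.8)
The plan is to run the proof of Theorem~\ref{T:main} essentially verbatim, with $\B^{ss}_0(2,1)$ in place of $\B^{ss}_0(2,0)$ and the odd index set $\widetilde S$ in place of $S$. First I would record the identification $H^\ast(\X_o(\tilde\pi))\simeq H^\ast(\M_0(2,1))\simeq H^\ast_{\G_0}(\B^{ss}_0(2,1))$ over $\QBbb$: since $\M_0(2,1)$ is smooth and $\G_0^{\CBbb}$ acts on $\B^{ss}_0(2,1)$ with constant stabilizer the rationally acyclic centre, equivariant and ordinary rational cohomology coincide here just as for $\N_0(2,1)$, and under this identification the $\Tor(M,D)$-actions correspond --- by the ``Odd degree Hitchin space'' discussion together with the universalization over $\widetilde{\mathcal T}(M)$ set up after Proposition~\ref{P:mcg} --- and commute with the $\Gamma_2$-action. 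Granting this, part~(1) is immediate: it is precisely the $k=1$ case of Corollary~\ref{C:invariant}.

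For parts~(2) and~(3) I would next assemble the $k=1$ counterparts of Section~\ref{S:eq}. The $\YMH$-flow stratifies $\B_0(2,1)=\bigcup_{d\geq0}Y_d$ with $Y_0=\B^{ss}_0(2,1)$ and an equivariant retraction onto the critical sets by \cite{Wi}, and for $d\geq1$ the critical set $\eta_d$ consists of Hitchin systems on the split bundles $L\oplus(L^{-1}\otimes{\mathcal O}[p])$ with $\deg L=d$, whose off-diagonal Higgs component is a section of $L^{-2}\otimes{\mathcal O}[p]\otimes K$, of degree $2g-1-2d$. Feeding the $k=1$ forms of \eqref{E:bb}, \eqref{E:bd}, \eqref{E:kernel} (available from \cite{DWWW}, which treats $\B_0(2,k)$ for general $k$) into the arguments of Section~\ref{S:eq}, the symmetric products that occur become $\widetilde\Sym^{2g-1-2d}M$, the Prym summand is $V(q,\gamma)=\Lambda^q W^-_\gamma$ with $q=2g-1-2d$ \emph{odd}, and $d$ runs over $1,\dots,g-1$, so the exceptional degrees are $\widetilde S=\{2j-1\}_{j=1}^{g-1}$. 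Explicitly: for $q\notin\widetilde S$ the Kirwan map surjects onto $H^{6g-6-q}_{\G_0}(X_d)$ for all $d$; for $q\in\widetilde S$ there is a unique $d_q$ with $2d_q=2g-1-q$ at which the horizontal long exact sequence of the $k=1$ analogue of \eqref{E:xdb} fails to be exact, $\ker\alpha^{6g-6-q+1}\cong\bigoplus_{1\neq\gamma\in\Gamma_2}V(q,\gamma)$; and the $k=1$ versions of Lemmas~\ref{L:alpha} and~\ref{L:beta} give $(\ker\alpha^{6g-6-q+1})_{\varphi_\gamma}\simeq V(q,\gamma)$ as $\Tor(M,D)$-modules together with the triviality of the $\Gamma_2$- (hence $\Tor(M,D)$-) action on the image of $\alpha^{6g-6-q}$.

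With this in hand the remainder copies the proof of Theorem~\ref{T:main}. For $q\notin\widetilde S$, Kirwan surjectivity, the triviality of the $\Tor(M,D)$-action on $H^\ast(B\G_0)$ (\cite{AB}), and Proposition~\ref{P:equivariant} force triviality on $H^{6g-6-q}_{\G_0}(X_0)\cong H^{6g-6-q}(\X_o(\tilde\pi))$, which is part~(3). For $q\in\widetilde S$ and $1\neq\gamma\in\Gamma_2$, the $k=1$ forms of Proposition~\ref{P:dww}(2a) (with \cite{AB} and Proposition~\ref{P:equivariant}) give $H^{6g-6-q}_{\G_0}(X_d)_{\varphi_\gamma}=\{0\}$ for $d\geq d_q$; Proposition~\ref{P:dww}(2b) and Lemma~\ref{L:alpha} give $H^{6g-6-q}_{\G_0}(X_{d_q-1})_{\varphi_\gamma}\simeq V(q,\gamma)$; and for $d<d_q$ the short exact sequence of Proposition~\ref{P:dww}(2c) together with triviality on the image of $\alpha^{6g-6-q}$ (Lemma~\ref{L:beta}) forces successive $\varphi_\gamma$-isotypical pieces to agree, so $H^{6g-6-q}_{\G_0}(X_d)_{\varphi_\gamma}\simeq V(q,\gamma)$ throughout $d<d_q$; at $d=0$ this reads $H^{6g-6-q}(\X_o(\tilde\pi))_{\varphi_\gamma}\simeq V(q,\gamma)$ as $\Tor(M,D)$-modules, which with part~(1) gives the splitting of part~(2). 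For the non-triviality I would note that the representation on $V(q,\gamma)=\Lambda^q W^-_\gamma$ is here induced by the honest symplectic lift $\widetilde\Pi_\gamma:\Tor(M,D)\to\Sp(W^-_\gamma,\ZBbb)$ of \eqref{E:prymtorelli2} --- not merely by $\Pi_\gamma$, which lands only in $\Sp(W^-_\gamma,\ZBbb)/\{\pm I\}$ --- so $\Lambda^q W^-_\gamma$ is a bona fide representation for odd $q$ as well, nontrivial for $1\leq q\leq 2g-3$ by Looijenga's theorem \cite{Lo}; while $\PTor(M,D)=\bigcap_{1\neq\gamma\in\Gamma_2}\ker\widetilde\Pi_\gamma$ acts trivially by construction.

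The step I expect to require the real work is the first half of the second paragraph: verifying that the singular-stratum analysis of \cite{DWWW} specializes to $k=1$ exactly as described --- in particular the shift from $\widetilde\Sym^{2g-2-2d}M$ to $\widetilde\Sym^{2g-1-2d}M$ and the consequent passage of the exceptional degrees from even to odd --- and, crucially, that the identification $\ker\alpha^{6g-6-q+1}\cong\bigoplus_{\gamma}\Lambda^q W^-_\gamma$ is equivariant for the genuine $\Sp(W^-_\gamma,\ZBbb)$-action rather than for its quotient by $\{\pm I\}$. This last point is not a formality: when $q$ is odd the central element $-I$ acts as $-1$ on $\Lambda^q W^-_\gamma$, so the non-triviality assertion would be false for the only projectively defined $\Tor(M)$-representation attached to $\Pi_\gamma$, and it is exactly this that dictates working with the disk-based group $\Tor(M,D)$ and its unambiguous lift to the double cover $M_\gamma$.
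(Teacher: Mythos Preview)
Your proposal is correct and follows exactly the approach the paper intends: the paper's own proof of Theorem~\ref{T:main2} consists of the single sentence ``The proof uses the stratification in \cite{DWWW} as in the even degree case. We omit the details.'' Your write-up supplies those details faithfully --- the shift to $\widetilde\Sym^{2g-1-2d}M$, the resulting odd exceptional set $\widetilde S$, and the passage through the $k=1$ analogues of Proposition~\ref{P:dww} and Lemmas~\ref{L:alpha}, \ref{L:beta} --- and your final paragraph correctly isolates the one point that is genuinely new relative to the even case, namely that for odd $q$ the representation $\Lambda^q W^-_\gamma$ only makes sense via the honest lift $\widetilde\Pi_\gamma:\Tor(M,D)\to\Sp(W^-_\gamma,\ZBbb)$ rather than the projective $\Pi_\gamma$, which is precisely why the theorem is stated for $\Tor(M,D)$ and $\PTor(M,D)$.
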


The proof uses the stratification in \cite{DWWW} as in the even degree case.  We omit the details.

%%%%%%%%%%%%%%%%%%%%%%%%%%%%%%%%%%%%

\section{Topology of the character variety and further results} \label{S:topology}

%%%%%%%%%%%%%%%%%%%%%%%%%%%%%%%%%%%%

\subsection{Morse theory} 
In this section we point out that Hitchin's method for computing the cohomology of $\M_0(2,1)$ applies to the singular case $\M_0(2,0)$ as well.  Recall that  the circle action on $\M_0(2,0)$ is given by $e^{i\theta}(A,\Phi)=(A,e^{i\theta}\Phi)$.  The associated Morse function is
$$
f(A,\Phi)=2i\int_M\tr\Phi\Phi^\ast=\Vert \Phi\Vert^2_{L^2}
$$
The fixed points of the circle can be computed as in \cite[Prop.\ 7.1]{H} and correspond either to $\Phi\equiv 0$ or to splittings
$$
E=L\oplus L^\ast\ , \ \deg L=d=1,\ldots, g-1\ ,\
\Phi=\left(\begin{matrix} 0&0\\ \varphi &0\end{matrix}\right)\ , \ \varphi\in \Omega^0(M,L^2\otimes K)
$$
In particular, since the singularities of $\M_0(2,0)$ correspond to splittings
$$
E=L\oplus L^\ast\ , \ \deg L=0\ , \ 
\Phi=\left(\begin{matrix} \varphi_1&0\\ 0&\varphi_2 \end{matrix}\right)\ , \ \varphi_i\in \Omega^0(M, K)
$$
it follows that the fixed points of the $S^1$ action not in the minimum of $f$ are contained within the non-singular locus $\M_0^{s}(2,0)$ of stable Higgs bundles.  We summarize this as follows.
\begin{proposition} \label{P:singset}
The fixed point set for the circle action on $\M_0(2,0)$ is parametrized by connected components $C_d$, $d=0, \ldots, g-1$.
\begin{itemize}
\item  $C_0$ is homeomorphic to $\N_0(2,0)$.
\item $C_d$, $d=1,\ldots, g-1$,  is contained in the non-singular locus $\M_0^{s}(2,0)$.  Each $C_d$ is diffeomorphic to the $\Gamma_2$ covering $\widetilde S^{2g-2d-2}M$ of the symmetric product $S^{2g-2d-2}M$.
\end{itemize}
\end{proposition}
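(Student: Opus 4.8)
The plan is to follow Hitchin's analysis of the circle action from \cite[Sect.\ 7]{H}, now carried out on the possibly singular space $\M_0(2,0)$, and to keep track of which fixed components meet the singular locus. First I would recall that a point of $\M_0(2,0)$ is represented by a polystable Higgs bundle $(E,\Phi)$ with $\det E=\mathcal O$ and $\tr\Phi=0$, and that the $S^1$-action $e^{i\theta}(E,\Phi)=(E,e^{i\theta}\Phi)$ fixes the $S$-equivalence class precisely when there is a one-parameter family of gauge transformations $g_\theta$ with $g_\theta\cdot(E,\Phi)=(E,e^{i\theta}\Phi)$. Differentiating, this means $(E,\Phi)$ underlies a \emph{complex variation of Hodge structure}: $E$ splits holomorphically as a direct sum of line bundles on which the generating infinitesimal gauge transformation acts with distinct weights, and $\Phi$ shifts the weight by one. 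For rank two and traceless $\Phi$ this forces either $\Phi\equiv0$, giving the component $C_0$ consisting of polystable bundles, i.e.\ $C_0\cong\N_0(2,0)$ via Narasimhan--Seshadri; or $E=L\oplus L^{-1}$ with $\Phi=\left(\begin{smallmatrix}0&0\\ \varphi&0\end{smallmatrix}\right)$, $\varphi\in H^0(M,L^2\otimes K)$, $\varphi\neq0$, and the stability condition then forces $\deg L=d$ with $1\le d\le g-1$. This is exactly the list in the statement, reproducing \cite[Prop.\ 7.1]{H}; the case $d\ge1$ coincides with Hitchin's description because the relevant local model is identical regardless of whether the ambient moduli space is smooth.

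Next I would argue that the components $C_d$ with $d\ge1$ lie in the smooth locus $\M_0^s(2,0)$. The singular points of $\M_0(2,0)$ are exactly the strictly polystable classes, i.e.\ those of the form $(L\oplus L^{-1},\diag(\varphi_1,\varphi_2))$ with $\deg L=0$ (this is standard, and is recalled in the excerpt). A fixed point in $C_d$ with $d\ge1$ is represented by $(L\oplus L^{-1},\Phi)$ with $\deg L=d\ge1$ and $\Phi$ strictly lower-triangular and nonzero; such a Higgs bundle is stable, since the only $\Phi$-invariant line subbundle is $L^{-1}$, which has negative degree. Hence no point of $C_d$, $d\ge1$, is strictly polystable, so $C_d\subset\M_0^s(2,0)$, where the moduli space is a manifold.

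Then I would identify each $C_d$, $d\ge1$, with the $\Gamma_2$-cover $\widetilde S^{2g-2d-2}M$ of the symmetric product. A fixed point in $C_d$ is determined by the pair $(L,\varphi)$ up to the automorphism of the splitting; since $L^2\otimes K$ has degree $2d+2g-2$, the divisor of $\varphi$ is an effective divisor of degree $2g-2d-2$, and conversely $L$ is recovered from the choice of $L^2$ together with a square-root, i.e.\ a choice differing by $\Gamma_2=H^1(M,\ZBbb/2)$. More precisely, sending $(L,\varphi)\mapsto L^2$ and the divisor of $\varphi$ realizes $C_d$ as the total space of the pullback of the fibration $S^{2g-2d-2}M\to J_{2g-2d-2}(M)$, $D\mapsto\mathcal O(D)$, along the squaring (equivalently $\Gamma_2$-) covering $J_{g-1-d}(M)\to J_{2g-2d-2}(M)$; this pullback is precisely $\widetilde S^{2g-2d-2}M$ in the notation of the excerpt (compare \eqref{E:bd} and the surrounding discussion in \cite{DWWW}, and \cite[Sect.\ 7]{H}). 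That $C_d$ is a smooth submanifold is immediate once one knows it lies in $\M_0^s(2,0)$ and has this explicit parametrization.

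The main obstacle is the very first point: justifying that the $S^1$-fixed-point analysis is unaffected by the singularities of $\M_0(2,0)$, i.e.\ that a class fixed by $S^1$ is represented by a genuine complex variation of Hodge structure even when the orbit is closed but the point is singular. Here one must be careful that ``fixed up to $S$-equivalence'' is the correct notion at strictly polystable points, and that the rigidity argument (an $S^1$-family of gauge equivalences exists and can be taken holomorphic, hence integrates to a grading) still applies; this is handled by working with the polystable representative in each $S$-equivalence class and invoking the semisimplicity of its automorphism group, exactly as in the nonsingular case. The remaining steps — stability of the $d\ge1$ fixed points and the identification with $\widetilde S^{2g-2d-2}M$ — are essentially bookkeeping that runs in parallel with \cite{H}.
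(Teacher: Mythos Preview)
Your proposal is correct and follows essentially the same approach as the paper: the paper gives no formal proof of this proposition but states it as a summary of the preceding paragraph, which computes the fixed points by citing \cite[Prop.~7.1]{H} and then observes that the singularities of $\M_0(2,0)$ are the degree-$0$ diagonal splittings, so the nonminimal critical sets lie in the stable locus. Your write-up is simply a more detailed version of the same argument, including the identification of $C_d$ with $\widetilde S^{2g-2d-2}M$ that the paper leaves implicit in its reference to \cite{H}.
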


Recall that a rank 2 unstable holomorphic vector bundle $(E,\dbar_E)$ has a Harder-Narasimhan type $(d,-d)$, $d>0$, where $d$ is the maximal degree of a line subbundle of $E$.  We stratify $\M_0(2,0)$ by subsets
$$
U_0=\left\{ (\dbar_E,\Phi) : \dbar_E \text{ semistable}\right\} \ ,\
U_d=\left\{ (\dbar_E,\Phi) : \dbar_E \text{ has HN type }(d,-d)\right\}
$$
Setting $U_0^{s}=U_0\cap\M_0^{s}(2,0)$, $U_0^{s}$ and $U_d$, $d\geq 1$, also define a stratification of $\M_0^{s}(2,0)$.

\begin{proposition} \label{P:strat}
\begin{enumerate}
\item  $U_0$ is open in $\M_0(2,0)$ and $U_d$ is a locally closed submanifold of $\M_0^{s}(2,0)$ of real codimension $2\mu_d=2g+4d-2$, $d=1,\ldots , g-1$.  Moreover, 
$\displaystyle
\overline U_d\subset\bigcup_{d'\leq d} U_d
$.
\item  For $d=0,\ldots, g-1$, the map 
$
\Psi : \RBbb^+\times U_d\lra U_d
$
given by $\Psi_t(\dbar_E,\Phi))= (\dbar_E, e^{-t}\Phi)$ defines a deformation retract of $U_d$ to the critical set $C_d$.
\end{enumerate}
\end{proposition}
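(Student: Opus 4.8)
The plan is to transplant Hitchin's analysis of the circle action on the \emph{smooth} moduli space $\M_0(2,1)$ (\cite[\S7]{H}) into the present, singular, setting. The one genuinely new feature is that $\M_0(2,0)$ is singular, but this is harmless: by Proposition \ref{P:singset} every fixed component $C_d$ with $d\geq1$ lies in the smooth quasi-projective variety $\M_0^{s}(2,0)$, the same degree bookkeeping will show that \emph{every} stratum $U_d$ with $d\geq1$ lies there, and the singularities of $\M_0(2,0)$ — the split pairs with $\deg L=0$ — all lie in $U_0$.

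For part (1), openness of $U_0$ is immediate, since having an unstable underlying bundle is a closed condition (the maximal degree of a line subbundle is upper semicontinuous in families). I would then check that $U_d\subset\M_0^{s}(2,0)$ for $d\geq1$: if $\dbar_E$ has Harder--Narasimhan type $(d,-d)$ with destabilizing subbundle $L$ of degree $d$, then any saturated line subbundle $N\subset E$ has degree $d$ (when $N=L$) or $\leq-d$ (when $N\to E\to L^{-1}$ is nonzero), so $E$ has no degree-zero line subbundle; hence a semistable Higgs structure on $E$ has no $\Phi$-invariant degree-zero subbundle and so is stable. The same computation places the split locus, hence the singular locus of $\M_0(2,0)$, inside $U_0$. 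That $U_d$ is a locally closed submanifold of real codimension $2\mu_d=2g+4d-2$ I would then get exactly as in \cite[\S7]{H} (cf.\ \cite{AB} for the Shatz stratification of $\A_0(2,0)$): upper semicontinuity of the Harder--Narasimhan degree gives local closedness and $\overline U_d\subset\bigcup_{d'\leq d}U_{d'}$, while a deformation-theoretic count identifies the normal space to $U_d$ at a point with a cohomology group of dimension $\mu_d$, constant along $U_d$.

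For part (2), first $\Psi_t$ really does map $U_d$ to $U_d$: for $t<\infty$, rescaling $\Phi$ by $e^{-t}$ alters neither $\dbar_E$ nor (for $d\geq1$, where the component $L\to L^{-1}\otimes K$ of $\Phi$ remains nonzero) stability, and the case $d=0$ is trivial; also $\Psi_0=\mathrm{id}$. The content is the limit as $t\to\infty$ inside $\M_0(2,0)$. For $d\geq1$ I would fix a $C^\infty$ splitting $E\cong L\oplus L^{-1}$ making $\dbar_E$ upper triangular with extension class $\beta\in\Omega^{0,1}(M,L^2)$, and apply the constant diagonal complex gauge transformation $g_s=\diag(s,s^{-1})\in\G_0^{\CBbb}$, which acts trivially on $\M_0(2,0)$: a direct computation shows $g_{e^{-t/2}}\cdot\Psi_t(\dbar_E,\Phi)$ has extension class scaled by $e^{-t}$, diagonal Higgs entries scaled by $e^{-t}$, upper Higgs entry scaled by $e^{-2t}$, and lower Higgs entry unchanged, so it converges in $\B_0(2,0)$ as $t\to\infty$ to a circle fixed point of the form $(L\oplus L^{-1},\Phi_\infty)$ with $\Phi_\infty$ strictly lower triangular — that is, a point of $C_d$. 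For $d=0$, $e^{-t}\Phi\to0$ and $(\dbar_E,0)$ is a semistable Higgs bundle whose $S$-equivalence class is $[\Gr(\dbar_E),0]\in\N_0(2,0)=C_0$. Either way this produces a retraction $r\colon U_d\to C_d$, and since in the normal form of Proposition \ref{P:singset} any rescaling of the Higgs field of a point of $C_d$ is realized by a diagonal element of $\G_0^{\CBbb}$, the homotopy $\Psi_t$ fixes $C_d$ pointwise in $\M_0(2,0)$. After reparametrizing $[0,\infty]\cong[0,1]$, $\Psi$ is then the required (strong) deformation retract, provided $\Psi_t$ is continuous up to $t=\infty$.

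That last point — existence and continuity of $r=\lim_{t\to\infty}\Psi_t$ — is what I expect to be the main obstacle. For $d\geq1$ the whole flow stays in the smooth quasi-projective variety $\M_0^{s}(2,0)$, where $r$ is the Bialynicki--Birula limit map $x\mapsto\lim_{\lambda\to0}\lambda\cdot x$ of the $\CBbb^*$-action $\lambda\cdot(\dbar_E,\Phi)=(\dbar_E,\lambda\Phi)$; its continuity and the existence of the limit follow, as in \cite[\S7]{H}, from the properness of $f=\Vert\Phi\Vert^2_{L^2}$, equivalently of the Hitchin map $(\dbar_E,\Phi)\mapsto\det\Phi$ (which scales by $\lambda^2$, so $\lambda\cdot x$ stays in a fixed fibre of a proper map as $\lambda\to0$). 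For $d=0$, $r$ is just the continuous map $(\dbar_E,\Phi)\mapsto[\dbar_E]\in\N_0(2,0)$ followed by the inclusion $C_0\hookrightarrow U_0$. The only remaining routine point is that the local choices of splitting $E\cong L\oplus L^{-1}$ and of extension class can be made continuously over $U_d$, so that the explicit limit formula indeed yields a continuous $r$.
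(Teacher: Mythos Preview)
Your argument is correct and follows essentially the same route as the paper: part (1) is deferred to the Shatz/Harder--Narasimhan stratification literature, and part (2) uses exactly the diagonal complex gauge transformation $g_t=\diag(e^{-t/2},e^{t/2})$ applied to $(\dbar_E,e^{-t}\Phi)$ to exhibit the limit in $C_d$ for $d\geq 1$, with the $d=0$ case trivial. You add two things the paper's proof omits: an explicit check that $U_d\subset\M_0^{s}(2,0)$ for $d\geq 1$, and a discussion of continuity of the limit map $r$ via properness of the Hitchin map / Bialynicki--Birula; the paper simply asserts the limit and relegates the gradient-flow interpretation to a remark citing \cite{BDW}.
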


\begin{remark}
In fact, we can show more.  Namely, $\Psi$ is the gradient flow of $f$, $2\mu_d$ is the index of $f$ at the critical set $C_d$ viewed as a Morse function on $\M_0^{s}(2,0)$, and $U_d$ is the stable manifold of $\nabla f$ associated to $C_d$ (this statement may not be true for higher rank).  The proof of this is analogous to \cite[Prop.\ 4.1]{BDW} where the corresponding statements are proved for stable pairs instead of Higgs bundles. 
\end{remark}

\begin{proof}[Proof of Proposition \ref{P:strat}]
(1) follows as in \cite[Prop.\ 3.7]{D}.  To show (2), if $(\dbar_E, \Phi)\in U_0$, then $\dbar_E$ is semistable, and $(\dbar_E,e^{-t}\Phi)\to (\dbar_E,0)\in C_0$.  If $(\dbar_E, \Phi)\in U_d$, $d\geq 1$, let $L$ be a destabilizing line bundle of degree $d$, and set $\dbar_0=\dbar_L\oplus \dbar_{L^\ast}$ and write $\beta\in \Omega^{0,1}(L^2)$ for the second fundamental form of $\dbar_E$.  Also, write $\Phi=\Phi_0+\Phi_1$, where
$$
\Phi_0=\left(\begin{matrix} 0&0\\ \varphi &0\end{matrix}\right)\ , \ \varphi\in H^0(M,L^2\otimes K)
$$
and $\Phi_1$ preserves $L$.  For $g_t=\left(\begin{matrix} e^{-t/2}&0\\0&e^{t/2}\end{matrix}\right)$, we have
$$
\Psi_t(\dbar_E,\Phi)=(\dbar_E,e^{-t}\Phi)=(g_t\cdot\dbar_E,e^{-t}g_t\Phi g_{t}^{-1})=\left(\dbar_0+\left(\begin{matrix} 0&e^{-t}\beta \\0&0\end{matrix}\right), \Phi_0+\Phi_1(t)\right)
$$
and $\Phi_1(t)\to 0$ as $t\to\infty$.  It follows that $\Psi_t(\dbar_E,\Phi)\to (\dbar_0,\Phi_0)\in C_d$ as $t\to\infty$.  Since $C_d$ is fixed by $\Psi_t$, it follows that $C_d$ is a deformation retract of $U_d$.
\end{proof}

The next result is the analogue of Frankel's theorem \cite{Fr} in the context of the singular variety $\M_0(2,0)$.

\begin{theorem} \label{T:frankel}
The long exact sequence in cohomology with rational coefficients for the pairs $\left(\cup_{d\leq d_0} U_d, \cup_{d< d_0} U_d\right)$ splits into short exact sequences
$$
0\lra H^\ast\left(\cup_{d\leq d_0} U_d, \cup_{d< d_0} U_d\right)\lra  H^\ast\left(\cup_{d\leq d_0} U_d\right)\lra  H^\ast\left( \cup_{d< d_0} U_d\right)\lra 0
$$
In particular, the inclusion maps $C_d\hookrightarrow\M_0(2,0)$ induce surjections in cohomology
$$
H^\ast(\M_0(2,0))\lra H^\ast(C_d)\lra 0
$$
The same result holds for the stratification of $\M_0^{s}(2,0)$.
\end{theorem}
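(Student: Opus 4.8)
The plan is to transport Hitchin's circle-action argument \cite{H} to the singular space $\M_0(2,0)$, the key enabling observation being Proposition \ref{P:singset}: every singular point of $\M_0(2,0)$ lies in the minimum stratum $U_0$, so the entire Morse theory \emph{above} the minimum takes place in the smooth locus $\M_0^{s}(2,0)$. Since the ordinary Euler class $e(\nu_d^-)$ is in fact a zero divisor on the $\Gamma_2$-twisted summands of $H^*(C_d;\QBbb)$ (those summands are concentrated in a single degree, while $\deg e(\nu_d^-)>0$), the most naive form of the splitting criterion does not apply; I would therefore run the argument $S^1$-equivariantly and descend at the end.

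First I would fix the set-up. The $S^1$-action $(A,\Phi)\mapsto(A,e^{i\theta}\Phi)$ is holomorphic, $f=\|\Phi\|^2$ is (a multiple of) its moment map with respect to one of the K\"ahler forms, is bounded below, and is proper (properness as in \cite{H}, via the Hitchin map), and its gradient flow is the flow $\Psi$ of Proposition \ref{P:strat}. Thus $\{U_d\}_{d=0}^{g-1}$ is the associated Morse--Bott stratification, with critical sets $C_d$: here $C_0\cong\N_0(2,0)$ is the minimum, and for $d\geq1$, $C_d\cong\widetilde S^{\,2g-2-2d}M$ lies in $\M_0^{s}(2,0)$ with negative normal bundle $\nu_d^-$ a complex bundle of rank $\mu_d=g-1+2d$; since $C_d$ is a connected component of the $S^1$-fixed locus, $S^1$ acts on $\nu_d^-$ with only nonzero weights. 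For $d\geq1$, since $\overline U_d\setminus U_d\subset X_{d-1}:=\bigcup_{d'<d}U_{d'}$ and $U_d\subset\M_0^{s}(2,0)$, an equivariant tubular-neighbourhood/excision argument together with the $S^1$-equivariant retraction $U_d\simeq C_d$ gives Thom isomorphisms $H^p_{S^1}(X_d,X_{d-1};\QBbb)\cong H^{p-2\mu_d}_{S^1}(C_d;\QBbb)$, exactly as in \cite[\S1]{AB} and \cite[Prop.\ 4.1]{BDW}; for $d=0$ one has simply $H^*_{S^1}(X_0)=H^*_{S^1}(C_0)$.

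Next comes the splitting. By the standard argument, the composite $H^{p-2\mu_d}_{S^1}(C_d)\xrightarrow{\ \sim\ }H^p_{S^1}(X_d,X_{d-1})\xrightarrow{\ \alpha^p\ }H^p_{S^1}(X_d)\to H^p_{S^1}(C_d)$ is cup product with the equivariant Euler class $e_{S^1}(\nu_d^-)\in H^{2\mu_d}_{S^1}(C_d)$. Since $S^1$ acts trivially on $C_d$ we have $H^*_{S^1}(C_d)=H^*(C_d)\otimes\QBbb[u]$, and because all weights of $\nu_d^-$ are nonzero, $e_{S^1}(\nu_d^-)$ is a polynomial in $u$ whose leading coefficient is a nonzero rational number, hence a non-zero divisor in $H^*(C_d;\QBbb)[u]$. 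Therefore $\alpha^p$ is injective for all $d,p$, the equivariant long exact sequences split, and inductively $H^*_{S^1}(\M_0(2,0);\QBbb)\cong\bigoplus_d H^{*-2\mu_d}(C_d;\QBbb)\otimes\QBbb[u]$ is a free $\QBbb[u]$-module. Freeness forces the Leray--Serre spectral sequence of $\M_0(2,0)\times_{S^1}ES^1\to BS^1$ to degenerate (equivariant formality), so that $H^*(\M_0(2,0);\QBbb)\cong H^*_{S^1}(\M_0(2,0);\QBbb)\otimes_{\QBbb[u]}\QBbb\cong\bigoplus_d H^{*-2\mu_d}(C_d;\QBbb)$. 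Comparing this equality of Betti numbers with the ordinary Morse inequalities coming from the pairs $(X_d,X_{d-1})$ shows that every ordinary long exact sequence splits as well.

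Finally, the split ordinary sequences give surjections $H^*(X_d)\to H^*(X_{d-1})$, hence $H^*(\M_0(2,0))\to H^*(X_0)=H^*(\N_0(2,0))$, and more generally onto each $H^*(C_d)$. Via the real-analytic identifications \eqref{E:reps} the first of these is the surjection $H^*(\X_0(\pi))\to H^*(\R_0(\pi))$; running the identical argument inside $\M_0^{s}(2,0)$, whose minimum stratum retracts onto the stable locus of $\N_0(2,0)$ (which corresponds under \eqref{E:reps} to $\R_0^{irr.}(\pi)\subset\X_0^{irr.}(\pi)$), yields the irreducible version. I expect the main obstacle to be essentially technical: making the Thom isomorphisms and the excision step rigorous when $\M_0(2,0)$ is singular --- which is handled by Proposition \ref{P:singset}, keeping the singularities inside the minimum and away from all the pairs $(X_d,X_{d-1})$ with $d\geq1$ --- and when it is non-compact, which is handled by properness of $f$. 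Making these two points precise is exactly the content of the promised singular analogue of Frankel's theorem \cite{Fr}.
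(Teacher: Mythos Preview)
Your proof is correct and follows the same overall strategy as the paper: pass to $S^1$-equivariant cohomology, use that the equivariant Euler class of $\nu_d^-$ is a non-zero divisor in $H^*(C_d)\otimes\QBbb[u]$ (since all weights are nonzero) to split the equivariant long exact sequences, and then descend to ordinary cohomology. Your observation that the \emph{ordinary} Euler class fails to be a non-zero divisor on the $\Gamma_2$-twisted summands is exactly the reason one is forced to go through equivariant cohomology, and it is good that you made this explicit.

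The genuine difference lies in the descent step. You invoke equivariant formality: freeness of $H^*_{S^1}(\M_0(2,0);\QBbb)$ over $\QBbb[u]$ forces the Serre spectral sequence to collapse, yielding $P_t(\M_0(2,0))=\sum_d t^{2\mu_d}P_t(C_d)$, and then the Morse inequalities become equalities, so the ordinary sequences split. The paper instead runs an explicit diagram chase comparing the equivariant and ordinary long exact sequences of the pairs $(X_{d_0},X_{d_0-1})$, proving by induction on $d_0$ simultaneously that the ordinary $\bar\alpha^q$ is injective and that the forgetful map $H^q_{S^1}\to H^q$ is surjective; the key input is that the forgetful map on relative cohomology, under the Thom isomorphism, is the ``$t=0$'' projection $H^*(C_d)\otimes H^*(BS^1)\to H^*(C_d)$, which is visibly surjective. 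Your route is more conceptual and closer in spirit to Kirwan's general formalism; the paper's route is more hands-on and avoids appealing to spectral-sequence degeneration, at the cost of a slightly longer induction. Either approach is adequate here, and both rely in exactly the same way on Proposition \ref{P:singset} to keep the singularities inside the open stratum and on properness of $f$ to handle non-compactness.
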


\begin{proof}
The proof follows the outline in \cite{AB} and \cite{Ki}.  The $S^1$-equivariant stratification $U_d$, $d=0,1,\ldots, g-1$ induces a long exact sequence in cohomology
$$
\cdots\lra
H^{q-1}_{S^1}\left( \cup_{d<d_0} U_d\right) {\buildrel \delta^{q-1}\over\lra}
H^{q}_{S^1}\left( \cup_{d\leq d_0} U_d, \cup_{d<d_0} U_d\right) {\buildrel \alpha^{q}\over\lra}
H^{q}_{S^1}\left( \cup_{d\leq d_0} U_d\right) {\buildrel \beta^{q}\over\lra}
H^{q}_{S^1}\left( \cup_{d<d_0} U_d\right) 
\lra\cdots
$$
that splits into short exact sequences due to the fact that the $S^1$-equivariant Euler class of the normal bundle of $U_d$ in $\cup_{d\leq d_0} U_d$ induces injections
$$
\alpha^q : 
H^{q}_{S^1}\left( \cup_{d\leq d_0} U_d, \cup_{d<d_0} U_d\right)\simeq H^{q-2\mu_d}_{S^1}\left( U_d\right) \simeq H^{q-2\mu_d}_{S^1}\left( C_d\right)\hookrightarrow H^{q}_{S^1}\left( \cup_{d\leq d_0} U_d\right)
$$
(cf.\ \cite[Prop.\ 13.4]{AB} and \cite[Lemma 2.18]{Ki}).  The same is true for the stratification  $U_0^{s}$, $U_d$, $d=1,\ldots, g-1$.  The point is that by Proposition \ref{P:singset} all the singularities of $\M_0(2,0)$ are contained in the open stratum, and therefore a  normal neighborhood of $U_d$ can be chosen in the smooth locus.  

Next, notice that for any $G$, the map $\jmath: X\simeq X\times EG\to X\times_G EG$ induces a map $\jmath^\ast : H^\ast_G(X)\to H^\ast(X)$.   Consider now the exact sequences 

$$
\xymatrix{ 
 & \ar[d]^{\gamma_{d_0-1}^{q-1}} 
H^{q-1}_{S^1}\left( \cup_{d<d_0} U_d\right)   
\ar[r]^{\delta^{q-1}\qquad } &  \ar[d]^{\zeta^q} 
H^{q}_{S^1}\left( \cup_{d\leq d_0} U_d, \cup_{d<d_0} U_d\right)
 \ar[r]^{\qquad\alpha^q} &\ar[d]^{\gamma_{d_0}^q} 
 H^{q}_{S^1}\left( \cup_{d\leq d_0} U_d\right)
  \ar[r]^{\beta^q} & \ar[d]^{\gamma_{d_0-1}^q}
  H^{q}_{S^1}\left( \cup_{d<d_0} U_d\right) 
 &
  \\
 & 
H^{q-1}\left( \cup_{d<d_0} U_d\right)   
\ar[r]^{\bar\delta^{q-1}\qquad } & 
H^{q}\left( \cup_{d\leq d_0} U_d, \cup_{d<d_0} U_d\right)
 \ar[r]^{\qquad\bar\alpha^q} &
 H^{q}\left( \cup_{d\leq d_0} U_d\right)
  \ar[r]^{\bar\beta^q} &
  H^{q}\left( \cup_{d<d_0} U_d\right) 
 &
}
$$
We will show by induction on $d_0$ that $\bar\alpha^q$ is injective and $\gamma_{d_0-1}^q$ is surjective.  For $d_0=0$, $\cup_{d<d_0}U_d=\emptyset$, hence both assertions trivially hold.  Assume the claim for all $d<d_0$.  If $\bar\alpha^q(\bar w)=0$, write $\bar w=\bar\delta^{q-1}(\bar u)$, where by induction $\bar u=\gamma_{d_0-1}^{q-1}(u)$ for some $u$.  If $w=\delta^{q-1}u$, then $\alpha^q w=\alpha^q\delta^{q-1}u=0$, so by the injectivity of $\alpha^q$, $w=0$, and therefore 
$$
0=\zeta^q w=\zeta^q\delta^{q-1} u=\bar\delta^{q-1}\gamma^{q-1}_{d_0-1} u=\bar\delta^{q-1}\bar u=\bar w
$$
This proves the injectivity of $\bar\alpha^q$.  To show that $\gamma_{d_0}^q$ is surjective, let $\bar w\in H^q\left(\cup_{d\leq d_0} U_d\right)$.   By the induction hypothesis for $d_0-1$, we may write $\bar\beta^q(\bar w)=\gamma^q_{d_0-1}u$.   The splitting of the top sequence implies that $u=\beta^qw$.   Therefore, 
$$
\bar\beta^q(\bar w-\gamma_{d_0}^q w)=\bar\beta^q\bar w-\gamma^q_{d_0-1}\beta^q w=\gamma_{d_0-1}^q u-\gamma^q_{d_0-1}u=0
$$
so $\bar w=\gamma^q_{d_0} w+\bar\alpha^q\bar v$.  On the other hand, under the Thom isomorphism the map $\zeta^q$ corresponds to the $t=0$ factor:
$$
\xymatrix{
\ar[d]^{\zeta^q} H^{q}_{S^1}\left( \cup_{d<d_0} U_d\right)  &\simeq &\ar[d]
 \bigoplus_{s+t=q-2\mu_d} H^s(C_d)\otimes H^t(BS^1) \\
 H^{q}\left( \cup_{d<d_0} U_d\right) &\simeq & H^{q-2\mu_d}(C_d)
}
$$
which is clearly surjective.  Hence,
$
\bar w=\gamma_{d_0}^q w +\alpha^q\zeta^q v=\gamma_{d_0}^q(w+\alpha^q v)
$,
and therefore $\gamma_{d_0}^q$ is surjective, completing the induction.
\end{proof}

\subsection{Proofs}   \label{S:proofs}
In this final section we complete the proofs of the remaining assertions in the Introduction.

\begin{proof}[Proof of Theorems \ref{T:hitchin} and \ref{T:topology}]
 Theorem \ref{T:hitchin} is an immediate consequence of Theorem \ref{T:frankel}.  To compute the Betti numbers, we have by Proposition \ref{P:singset} and Theorem \ref{T:frankel} that
\begin{align*}
P_t(\M_0(2,0))&= \sum_{d=0}^{g-1} t^{2\mu_d} P_t(C_d) 
=P_t(C_0)+ \sum_{d=1}^{g-1} t^{2\mu_d} P_t(C_d) \\
&=P_t(\N_0(2,0))+\sum_{d=1}^{g-1} t^{2\mu_d} P_t(\widetilde S^{2g-2-2d}M)
\end{align*}
The sum on the right hand side above can be evaluated (see \cite{DWWW}, eqs.\ (21), (22), and (23)).  The result is precisely $C(t,g)$ in \eqref{E:c}.   The computation  for $\M_0^{s}(2,0)$ follows similarly.  This completes the proof of Theorem \ref{T:topology}.
\end{proof}

Next, we consider Theorem \ref{T:gamma}.
  Embed $\U(1)\subset \SU(2)$ as 
$
e^{i\theta}\mapsto \left( \begin{matrix} e^{i\theta} &0\\ 0&e^{-i\theta}\end{matrix}\right)
$,
and let $N(\U(1))\subset \SU(2)$ denote its normalizer.  This induces embeddings $\A(1,0)\hookrightarrow\A_0(2,0)$ and $\G(1)\hookrightarrow \G_0$.  Similarly, the embedding $\U(1)\times\U(1)\hookrightarrow\U(2)$ as diagonal matrices induces embeddings
$\A(1,0)\times \A(1,0)\hookrightarrow \A(2,0)$ and $\G(1)\times\G(1)\hookrightarrow\G$.
With this understood, we make the following definitions.
\begin{align*}
Z_{1} &= \A(1,0)\bigr/\G_p(1) \times_{N(\U(1))} E(\SU(2)) \\
\widehat Z_{1} &= \A(1,0)\times\A(1,0)\bigr/(\G(1)\times\G(1))_p \times_{N(\U(1)\times\U(1))} E(\U(2))\\
Z_2 &= \A_0(2,0)\bigr/\G_0(p) \times_{\SU(2)} E(\SU(2))\\
\widehat Z_2 &= \A(2,0)\bigr/\G(p) \times_{\U(2)} E(\U(2))
\end{align*}
The spaces $Z_{1}$ and $Z_2$ are precisely the spaces $Y_{1b}$ and $Y_2$ in \cite[eq.\ (2.3-4)]{CLM}.  Note that $E(\SU(2))\simeq E(\U(2))$.
The trace map gives a fibration of $\widehat Z_2\to \A(1,0)/\G_p(1)$, and the inclusion
$\widehat Z_{1}\hookrightarrow \widehat Z_2$ gives a  subfibration $\widehat Z_{1}\to \A(1,0)/\G_p(1)$.  Since the  fibers over the trivial connection are $Z_{1}\hookrightarrow Z_2$, the corresponding  actions of the monodromy $\pi_1(\A(1,0)/\G_p(1))\simeq\pi_1(J_0(M))$ on the cohomology of the fibers commute with the map $H^\ast(Z_2)\to H^\ast(Z_{1})$ induced by inclusion.

\begin{lemma} \label{L:y1b}
The action of $\pi_1(\A(1,0)/\G_p(1))$ on $H^\ast(Z_{1})$ is trivial.
\end{lemma}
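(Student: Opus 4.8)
The plan is to identify the monodromy representation explicitly and recognize it as conjugation inside a gauge group, an operation that is trivial on rational cohomology. First, since $\A(1,0)$ is contractible and the based gauge group $\G_p(1)$ acts freely, $\A(1,0)/\G_p(1)$ is a model for $B\G_p(1)$, and the fibration $\widehat Z_1\to\A(1,0)/\G_p(1)$ (obtained by restricting the trace fibration of $\widehat Z_2$) is homotopy equivalent to the map of classifying spaces $B\hat{\mathcal G}_1\to B\G_p(1)$ induced by the ``reduced determinant'' homomorphism of the gauge group $\hat{\mathcal G}_1=(\G(1)\times\G(1))_p\rtimes N(\U(1)\times\U(1))$ appearing in the definition of $\widehat Z_1$. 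This homomorphism is surjective, its kernel ${\mathcal G}_1$ satisfies $B{\mathcal G}_1\simeq Z_1$, and under these identifications the monodromy of a class $q\in\pi_1(\A(1,0)/\G_p(1))=\pi_0(\G_p(1))$ on $H^\ast(Z_1)=H^\ast(B{\mathcal G}_1)$ is the map $(Bc_h)^\ast$, where $h\in\hat{\mathcal G}_1$ is any lift of $q$ and $c_h\colon{\mathcal G}_1\to{\mathcal G}_1$ is conjugation by $h$.

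Next I would compute $(Bc_h)^\ast$ rationally. One has ${\mathcal G}_1\cong\G_p(1)\rtimes N(\U(1)\times\U(1))$, an extension of $\pi_0({\mathcal G}_1)\cong H^1(M,\ZBbb)\rtimes\ZBbb/2$ (the Weyl factor acting by $-1$) by the constant torus $\U(1)\times\U(1)$. Choosing $h$ to be a diagonal gauge transformation supported on the first line-bundle factor, a direct computation shows that $c_h$ restricts to the identity on the abelian subgroup $H^1(M,\ZBbb)$ and on the constant torus, and induces the identity on the Weyl quotient $\ZBbb/2$; it differs from the identity only in that it translates the Weyl coset. Since, with $\QBbb$ coefficients, the cohomology of $H^1(M,\ZBbb)\rtimes\ZBbb/2$ depends --- through the Lyndon-Hochschild-Serre spectral sequence and the transfer --- only on the automorphisms induced on $H^1(M,\ZBbb)$ and on $\ZBbb/2$, both of which are trivial here, we conclude $(Bc_h)^\ast=\mathrm{id}$, and hence that the monodromy is trivial.

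A softer route, which sidesteps the bookkeeping, is to combine two facts: the compatibility (noted just before the lemma) of the monodromy actions on $H^\ast(Z_1)$ and $H^\ast(Z_2)$ with the restriction map $H^\ast(Z_2)\to H^\ast(Z_1)$, and the triviality of the monodromy on $H^\ast(Z_2)=H^\ast(B\G_0)$. The latter holds because, by Atiyah-Bott, $H^\ast(B\G_0;\QBbb)$ is generated by the slant products of the second Chern class of the universal $\SU(2)$-bundle over $M\times B\G_0$, and the monodromy self-maps of $Z_2$ are covered by bundle automorphisms of that universal bundle lying over the identity of $M\times B\G_0$, which fix every characteristic class. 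Triviality on $H^\ast(Z_1)$ would then follow once one knows $H^\ast(Z_2)\to H^\ast(Z_1)$ is surjective. I expect the main obstacle to be exactly this surjectivity --- equivalently, in the first approach, the careful verification that conjugation by $h$ touches only the Weyl coset: this is where the reducible, singular nature of the stratum genuinely enters, and one is forced to use the Atiyah-Bott-Kirwan analysis of the equivariant cohomology of the reducible stratum rather than a purely formal argument.
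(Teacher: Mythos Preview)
Your first approach is essentially correct and genuinely different from the paper's, though the last step deserves a cleaner statement than the one you gave. The assertion that the induced map on rational cohomology ``depends only on the automorphisms induced on $H^1(M,\ZBbb)$ and on $\ZBbb/2$'' is false as a general principle (already for $\ZBbb^2$ as an extension of $\ZBbb$ by $\ZBbb$, a shear is trivial on subgroup and quotient but not on $H^1$). What does work is exactly the transfer argument you allude to: with your choice $h=(g_0,1)$ one checks directly that $c_h$ restricts to the \emph{identity} on the index-two normal subgroup $N\subset{\mathcal G}_1$ consisting of elements with trivial Weyl part, simply because $h$ and every element of $N$ lie in the abelian ``diagonal'' part of $\hat{\mathcal G}_1$. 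Since restriction $H^\ast({\mathcal G}_1;\QBbb)\hookrightarrow H^\ast(N;\QBbb)$ is injective, $(Bc_h)^\ast=\mathrm{id}$ follows. So the ``careful verification'' you flag as the main obstacle is in fact a two-line computation, and your argument goes through. (A small slip: you wrote ${\mathcal G}_1\cong\G_p(1)\rtimes N(\U(1)\times\U(1))$; the constant factor should be $N(\U(1))\subset\SU(2)$, not the normalizer in $\U(2)$.)

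The paper argues quite differently. It identifies the fibration concretely with $J_0(M)\times J_0(M)\times_{\ZBbb/2}E(\ZBbb/2)\to J_0(M)$ via the determinant map $(L_1,L_2)\mapsto L_1\otimes L_2$, restricts to an embedded circle in the base, and uses the Wang sequence to reduce triviality of the monodromy to surjectivity of $H^\ast(X|_{S^1})\to H^\ast(F)$ on the fiber. That surjectivity is then obtained by passing to the $\ZBbb/2$-cover $\widetilde X=J_0(M)\times J_0(M)$, which is a \emph{trivial} fibration over $J_0(M)$, and taking $\ZBbb/2$-invariants. Your group-theoretic route is more conceptual and would adapt to other structure groups; the paper's is more hands-on and avoids having to identify the fibration with a map of classifying spaces.

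Your ``softer route'' has a genuine gap, as you suspected. Surjectivity of $H^\ast(Z_2)\to H^\ast(Z_1)$ is not supplied anywhere, and in fact it is at least as strong as what the lemma is being used for: if that restriction were surjective, the long exact sequence of $(Z_2,Z_1)$ would split and triviality of the $\Gamma_2$-action on $H^\ast(Z_2,Z_1)$ would follow immediately from Atiyah--Bott, bypassing the lemma entirely. So this path either begs the question or requires an independent argument you have not given.
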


\begin{proof}
We have homotopy equivalences of fibrations
$$
 \xymatrix{  \ar[d] Z_{1} &\simeq&\ar[d] F=J_0(M)\times_{\ZBbb/2} E(\ZBbb/2)\\
\ar[d]^{\tr} \widehat Z_{1}   &\simeq&\ar[d]^{\det}  X=J_0(M)\times J_0(M)\times_{\ZBbb/2} E(\ZBbb/2) \\
  \A(1,0)/\G_p(1) &\simeq &J_0(M) }
 $$
%$$
% \xymatrix{  J_0(M)\times_{\ZBbb/2} E(\ZBbb/2)\  \ar[r] & \ar[d]\ \\
%   & J_0(M) }
% $$
where the action of $\ZBbb/2$ on the Jacobian corresponding to $Z_{1}$ is $L\mapsto L^\ast$, and on the product corresponding to $\widehat Z_{1}$ it is $(L_1,L_2)\mapsto (L_2,L_1)$.  Hence, it suffices to prove that the action of $\pi_1(J_0(M))$ on $H^\ast(F)$ from the fibration $X$ is trivial. 
Since $J_0(M)$ is a torus this will be true if the corresponding statement holds for the restriction to any embedded $S^1\subset J_0(M)$.  We may write
$$
X\bigr|_{S^1}\simeq F\times [0,1]\bigr/ (x,0)\sim (x, \phi(x))
$$
for the monodromy $\phi: F\to F$.  If $j: F\hookrightarrow X\bigr|_{S^1}$ denotes the inclusion of the fiber over $0$, there is an exact sequence
$$
\cdots\lra H^\ast(X\bigr|_{S^1}) {\buildrel j^\ast \over\lra} H^\ast(F) {\buildrel 1-\phi^\ast \over\lra} H^\ast(F)\lra H^{\ast+1}(X\bigr|_{S^1})\lra\cdots
$$
In particular,
to prove that the action of $\phi^\ast$ is trivial  it suffices to show that the inclusion $F\hookrightarrow X$ induces a surjection on cohomology.  The  $\ZBbb/2$ cover 
$$
\widetilde X =J_0(M)\times J_0(M)\times E(\ZBbb/2)
$$
is a  trivial fibration, since the fibration $J_0(M)\times J_0(M)\to J_0(M)$ given by $(L_1, L_2)\to L_1\otimes L_2$ is trivial.  
Hence, $H^\ast(\widetilde X\bigr|_{S^1})\to H^\ast(\widetilde F)$ is surjective, where the fiber $\widetilde F$ is a $\ZBbb/2$ cover of $F$.  Since the cohomology of $F$ and $X\bigr|_{S^1}$ are  the $\ZBbb/2$-invariant parts of the cohomology of $\widetilde F$ and $\widetilde X\bigr|_{S^1}$, the result follows.
\end{proof}

\begin{proof}[Proof of Theorem \ref{T:gamma}]
By \cite{AB},  $\Gamma_2$ acts trivially on $H^\ast(Z_2)\simeq H^\ast(B\G_0)$.
By Lemma \ref{L:y1b}, $\Gamma_2$ also acts trivially on  $H^\ast(Z_{1})$.  
It follows that $\Gamma_2$  acts trivially on $H^\ast(Z_2,Z_{1})$.  The proposition now follows
  by the argument in the proof of \cite[Prop.\ 3.2]{CLM}.
\end{proof}

We now have the following
\begin{corollary} \label{C:gamma}
The inclusion
 $\R_0(\pi)\hookrightarrow \X_0(\pi)$
induces a surjection
 $H^\ast\left(\X_0(\pi)\right)^{\Gamma_2}\to H^\ast\left(\R_0(\pi)\right)$
in rational cohomology.
\end{corollary}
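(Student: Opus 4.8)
The plan is to deduce this from Theorem \ref{T:hitchin}, which already gives the surjectivity of the restriction map $H^\ast(\X_0(\pi))\to H^\ast(\R_0(\pi))$ with no reference to $\Gamma_2$, together with Theorem \ref{T:gamma}, by passing to $\Gamma_2$-invariants.

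First I would observe that the $\Gamma_2$-action on $\M_0(2,0)$ by tensoring with $2$-torsion line bundles commutes with the circle action $e^{i\theta}(A,\Phi)=(A,e^{i\theta}\Phi)$, since tensoring with a flat line bundle leaves the scaling of the Higgs field unchanged. Consequently the Harder--Narasimhan stratification $\{U_d\}$ of Proposition \ref{P:strat}, the critical sets $C_d$, and in particular the retraction $\Psi_t$ of the minimal stratum $U_0$ onto $C_0\simeq\N_0(2,0)$, are all $\Gamma_2$-equivariant; likewise the splitting in Theorem \ref{T:frankel} is produced by an $S^1$-equivariant Euler class argument that respects the commuting $\Gamma_2$-action. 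Hence the surjection $H^\ast(\M_0(2,0))\to H^\ast(C_0)=H^\ast(\N_0(2,0))$ furnished by Theorem \ref{T:hitchin} is $\Gamma_2$-equivariant, and under the identifications \eqref{E:reps} it is precisely the map induced by the inclusion $\R_0(\pi)\hookrightarrow\X_0(\pi)$, equivariant for the action of $\Gamma_2\simeq\Hom(\pi,\{\pm1\})$ on both sides.

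Next I would invoke Theorem \ref{T:gamma}: $\Gamma_2$ acts trivially on $H^\ast(\R_0(\pi))=H^\ast(\N_0(2,0))$, so $H^\ast(\R_0(\pi))^{\Gamma_2}=H^\ast(\R_0(\pi))$. Since $\Gamma_2$ is finite and the coefficients are rational, the functor of taking invariants is exact---it is split off by the averaging idempotent $\frac{1}{|\Gamma_2|}\sum_{\gamma\in\Gamma_2}\gamma^\ast$---so applying $(-)^{\Gamma_2}$ to the equivariant surjection above yields a surjection $H^\ast(\X_0(\pi))^{\Gamma_2}\to H^\ast(\R_0(\pi))^{\Gamma_2}=H^\ast(\R_0(\pi))$, which is the assertion.

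I do not expect a genuine obstacle here: the two substantive inputs---the perfection of the Morse--Bott stratification of the singular variety $\M_0(2,0)$ (Theorems \ref{T:frankel} and \ref{T:hitchin}) and the $\Gamma_2$-invariance of $H^\ast(\N_0(2,0))$ (Theorem \ref{T:gamma}, the Harder--Narasimhan type statement)---are already established. The only point requiring care is the bookkeeping that the deformation retraction onto the minimal stratum, and the Euler-class splitting of Theorem \ref{T:frankel}, may be chosen $\Gamma_2$-equivariantly, which is immediate from the fact that the $\Gamma_2$- and $S^1$-actions on $\M_0(2,0)$ commute.
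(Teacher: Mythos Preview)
Your proof is correct and follows essentially the same route as the paper's: use the surjection from Theorem~\ref{T:frankel}/\ref{T:hitchin}, observe that it is $\Gamma_2$-equivariant so that surjectivity persists on invariants (over $\QBbb$), and then invoke Theorem~\ref{T:gamma} to identify $H^\ast(\R_0(\pi))^{\Gamma_2}$ with $H^\ast(\R_0(\pi))$. You are simply more explicit than the paper about why the retraction and Euler-class splitting are $\Gamma_2$-equivariant and why passing to invariants is exact, but the argument is the same.
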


\begin{proof}
By Theorem \ref{T:frankel}, $H^\ast\left(\X_0(\pi)\right)\to H^\ast\left(\R_0(\pi)\right)$
is surjective, and  it remains exact on the $\Gamma_2$-invariant subspaces.  The result now follows by  Theorem \ref{T:gamma}.
\end{proof}

\begin{proposition} \label{P:gl}
For $g>3$,
the Torelli group $\Tor(M)$ acts non-trivially on the rational cohomology of $\X(\pi)$ and $\R(\pi)$.
\end{proposition}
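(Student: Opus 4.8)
The plan is to deduce Proposition \ref{P:gl} from the fixed-determinant results of the previous sections by the same $\Gamma_2$-averaging used in Section \ref{S:higgs}. Tensoring a representation with a $\U(1)$- (resp.\ $\CBbb^\ast$-) valued character exhibits $\R(\pi)$ and $\X(\pi)$ as $\Gamma_2$-quotients
$$
\R(\pi)\simeq\bigl(\R_0(\pi)\times\Hom(\pi,\U(1))\bigr)\bigr/\Gamma_2\ ,\qquad
\X(\pi)\simeq\bigl(\X_0(\pi)\times\Hom(\pi,\CBbb^\ast)\bigr)\bigr/\Gamma_2\ ,
$$
where $\Gamma_2$ acts by its usual action on the first factor and by translation on the second (compare the discussion preceding Proposition \ref{P:tensor3}, and \eqref{E:even}, \eqref{E:even2}). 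First I would pass to ordinary rational cohomology: since the rational cohomology of a finite quotient is the invariant subspace, since $\Hom(\pi,\CBbb^\ast)$ retracts $\Gamma_2$- and $\Tor(M)$-equivariantly onto $\Hom(\pi,\U(1))\simeq J_0(M)$, and since translations act trivially on $H^\ast(J_0(M))$, the K\"unneth theorem gives $\Tor(M)$-equivariant isomorphisms
$$
H^\ast(\R(\pi))\simeq H^\ast(\R_0(\pi))^{\Gamma_2}\otimes H^\ast(J_0(M))\ ,\qquad
H^\ast(\X(\pi))\simeq H^\ast(\X_0(\pi))^{\Gamma_2}\otimes H^\ast(J_0(M))\ ,
$$
in which $\Tor(M)$ acts trivially on the factor $H^\ast(J_0(M))$, because it acts trivially on $H_1(M;\ZBbb)$ and hence on the cohomology of the torus $J_0(M)$.

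The next step is to show that, for $g>3$, $\Tor(M)$ acts non-trivially on each of $H^\ast(\R_0(\pi))^{\Gamma_2}$ and $H^\ast(\X_0(\pi))^{\Gamma_2}$. For $\R_0(\pi)$ this is immediate: by Theorem \ref{T:gamma}, $\Gamma_2$ acts trivially on $H^\ast(\N_0(2,0))\simeq H^\ast(\R_0(\pi))$, so the $\Gamma_2$-invariant part is everything, and $\Tor(M)$ acts non-trivially on it by the theorem of Cappell--Lee--Miller \cite{CLM}. For $\X_0(\pi)$ I would invoke Corollary \ref{C:gamma}: the restriction homomorphism $H^\ast(\X_0(\pi))^{\Gamma_2}\to H^\ast(\R_0(\pi))$ induced by the inclusion $\R_0(\pi)\hookrightarrow\X_0(\pi)$ is surjective, and it is $\Tor(M)$-equivariant because both sides carry the action by automorphisms of $\pi$; since $\Tor(M)$ acts non-trivially on the target it must act non-trivially on $H^\ast(\X_0(\pi))^{\Gamma_2}$. (Equivalently, this is Corollary \ref{C:nontrivial} together with the observation, supplied by Corollary \ref{C:gamma}, that the non-trivial classes may be chosen $\Gamma_2$-invariant.)

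To finish, I would use the elementary fact that if a rational $\Tor(M)$-module $V$ has non-trivial action and $W\neq 0$ carries the trivial action, then $V\otimes_\QBbb W$ has non-trivial action: choosing $g\in\Tor(M)$, $v\in V$ with $gv\neq v$, and $0\neq w\in W$, one has $g(v\otimes w)=gv\otimes w\neq v\otimes w$. Applying this with $W=H^\ast(J_0(M))$, which contains the nonzero class $1$, to the two tensor decompositions above yields the non-triviality of the $\Tor(M)$-action on $H^\ast(\X(\pi))$ and on $H^\ast(\R(\pi))$, as required.

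I expect the only delicate point to be the second step for $\X_0(\pi)$: Corollary \ref{C:nontrivial} by itself guarantees only non-triviality on all of $H^\ast(\X_0(\pi))$, whereas the decomposition above sees only the $\Gamma_2$-invariant summand, so one must know that the non-triviality is already detected there. That it is, is precisely the content of Corollary \ref{C:gamma} (which in turn rests on Theorems \ref{T:hitchin} and \ref{T:gamma}); everything else in the argument is formal manipulation of $\Gamma_2$-covers and tensor products.
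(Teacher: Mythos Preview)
Your proposal is correct and follows essentially the same approach as the paper: the paper obtains the tensor decomposition \eqref{E:gl} from the $\Gamma_2$-covering $\M_0(2,0)\times J_0(M)\to\M(2,0)$ (your representation-variety formulation is the same map under the identifications \eqref{E:reps}), then invokes Theorem \ref{T:gamma} and \cite{CLM} for $\R(\pi)$ and Corollary \ref{C:gamma} for $\X(\pi)$, exactly as you do. Your write-up is somewhat more explicit about the equivariance and the final tensor-product step, but the logical structure is identical.
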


\begin{proof}
As in Section \ref{S:eq}, 
the determinant fibration $\M(2,0)\to J_0(M)$  gives a splitting
\begin{equation} \label{E:gl}
H^\ast(\M(2,0))\simeq H^\ast(\M_0(2,0))^{\Gamma_2}\otimes H^\ast(J_0(M))
\end{equation}
Indeed, the map $\M_0(2,0)\times J_0(M)\to \M(2,0)$ given by $(E,\Phi, L)\mapsto (E\otimes L, \Phi)$ is a $\Gamma_2$ covering.  Applying this construction also to $\N(2,0)$ and using  Theorem \ref{T:gamma}, we have
$$
H^\ast(\N(2,0))\simeq H^\ast(\N_0(2,0))\otimes H^\ast(J_0(M))
$$
Hence, the non-triviality of the action of $\Tor(M)$  on $H^\ast(\N(2,0))$ follows from the non-triviality of the the action on $H^\ast(\N_0(2,0))$ \cite{CLM}.  By Corollary \ref{C:gamma} and \eqref{E:gl}, this also implies the non-triviality of the action on $H^\ast(\M(2,0))$.
\end{proof}

\begin{proof}[Proof of Corollary \ref{C:nontrivial}]
By Theorem \ref{T:frankel} it suffices to show that $\PTor(M)$ acts non-trivially on the rational cohomology of the  $\R_0(\pi)$.  This is true if and only if $\PTor(M,D)$ acts non-trivially.
 By the calculation in \cite[Sect.\ 6]{CLM}, it suffices to show that the induced map 
$$
H_1(\PTor(M,D), \ZBbb)\lra H_1(\Tor(M,D), \ZBbb)
$$ 
is surjective modulo torsion.  On the other hand, by \cite[Thm.\ 2.5]{Lo} we have an exact sequence 
$$
1\lra \PTor(M,D)\lra \Tor(M,D)\lra L\lra 1
$$
where $L$ has finite index in $\prod_{0\neq \gamma\in\Gamma_2}\Sp(W_\gamma^-,\ZBbb)$ 
(note that $\Tor(M,D)$ surjects onto $\Tor(M)$).
Since $g>3$, $\dim W_\gamma^-\geq 6$.    It follows from the Matsushima vanishing theorem that $\Hom(L,\ZBbb)=\{0\}$, hence, the map $H^1(\Tor(M,D),\ZBbb)\to H^1(\PTor(M,D),\ZBbb)$ is injective.  This completes the proof.
\end{proof}

\begin{proof}[Proof of Corollary \ref{C:proj}]
By \eqref{E:tilde} and \eqref{E:tildegamma},
the first statement follows from surjectivity of the Kirwan map  for $\N_0(2,1)$ (cf.\ \cite{AB}) and Theorem \ref{T:main2} (1), respectively.  The statement for $\widehat \X_e(\pi)$ will follow by showing the corresponding statement for $\widehat\R_e(\pi)$ and using the fact, Corollary \ref{C:gamma},  that the inclusion 
$
\widehat \R_e(\pi)\hookrightarrow\widehat \X_e(\pi)
$
induces a surjection on cohomology.  Finally, since $\widehat\R_e(\pi)=\R_0(\pi)/\Gamma_2$, the result will follow from \cite{CLM}  if we can show that rationally the cohomology of $\R_0(\pi)$ is $\Gamma_2$ invariant.  But this is the content of Theorem \ref{T:gamma}.
\end{proof}

% \newpage
 
  \begin{table}

\begin{minipage}[b]{1\linewidth}
 
\centering
 \begin{tabular}{|| c | c | c || }
 \hline\hline
 Cohomology group &  $\Tor(M)$ acts trivially? & Reference  \\
 \hline\hline
 && \\
 $H^\ast_{eq.}(\X(\pi))$ & yes & Prop.\ \ref{P:nonfixeddet} \\ 
  &&\\
 \hline
  &&\\
  $H^\ast_{eq.}(\R(\pi))$ & yes &  \cite{AB} \\
  &&\\
 \hline\hline
  &&\\
$H^\ast(\X(\pi))$ & no & Prop.\ \ref{P:gl}  \\  
 &&\\
 \hline
  &&\\
 $H^\ast(\R(\pi))$ & no & Prop.\ \ref{P:gl} \\
&&\\
 \hline\hline
  &&\\
  $H^\ast_{eq.}(\X_0(\pi))$ & no & Thm.\ \ref{T:main} (2)  \\
  &&\\
 \hline
  &&\\
   $H^\ast_{eq.}(\R_0(\pi))$ & yes &   \cite{AB}\\
&&\\
 \hline\hline
  &&\\
  $H^\ast(\X_0(\pi))$ & no & Cor.\ \ref{C:nontrivial} \\
  &&\\
 \hline
  &&\\
    $H^\ast(\R_0(\pi))$ & no & \cite[Thm.\ 1.1]{CLM} \\
&&\\
 \hline\hline
  &&\\
  $H^\ast(\widehat\X_o(\pi))$ & yes & Cor.\ \ref{C:proj}\\
  &&\\
 \hline
  &&\\
   $H^\ast(\widehat\R_o(\pi))$ & yes & Cor.\ \ref{C:proj} \\
  &&\\
  \hline \hline
  &&\\
$H^\ast(\widehat\X_e(\pi))$ & no & Cor.\ \ref{C:proj} \\
&&\\
 \hline
  &&\\
  $H^\ast(\widehat\R_e(\pi))$ & no & Cor.\ \ref{C:proj}\\
&&\\
\hline \hline
 \end{tabular}
 \bigskip
 \caption{Action of the Torelli group on cohomology of representation varieties ($g>3$)} 
 
\footnotetext{The results in this table also apply to the cohomology of the subspaces of irreducible representations}

\end{minipage}

 \end{table}

\clearpage


\begin{thebibliography}{99}




\bibitem{AB} M.F. Atiyah and R. Bott, The Yang-Mills equations over Riemann surfaces.  Phil.\ Trans.\ R.
Soc.\ Lond.\ A  308 (1982), 523--615.

\bibitem{BDW} S. Bradlow, G. Daskalopoulos, and R. Wentworth, 
Birational equivalences of vortex moduli.  Topology  35  (1996),  no. 3, 731--748. 

\bibitem{CLM} S.E. Cappell, R. Lee, and E.Y. Miller, The action of the Torelli group on the homology of representation spaces is nontrivial. Topology 39 (2000), 851--871.

\bibitem{Cor} K.  Corlette,
Flat $G$-bundles with canonical metrics.
J. Diff. Geom. 28 (1988), 361--382.


\bibitem{CS} M. Culler and P. Shalen,  Varieties of group representations and splittings of $3$-manifolds.  Ann. of Math. (2)  117  (1983), no. 1, 109--146.


\bibitem{D} G.D. Daskalopoulos, The topology of the space of stable bundles on a Riemann surface.  J.\ Diff.\
Geom.\  36 (1992), 699--746.

\bibitem{DWWW} G.D. Daskalopoulos, J. Weitsman, G. Wilkin, and R. Wentworth, Morse theory and hyperk\"ahler Kirwan surjectivity for Higgs bundles, preprint.

\bibitem{Do} S.  Donaldson,
Twisted harmonic maps and the
self-duality equations.  Proc. London Math. Soc.   55 (1987),
127--131.

\bibitem{ES} J.  Eells and J.  Sampson,
Harmonic mappings of Riemannian manifolds.
Amer.  J.  Math.  86 (1964),
109--160.


\bibitem{Fr} T. Frankel,
Fixed points and torsion on K\"ahler manifolds.  Ann. of Math. (2)  70  (1959), 1--8.

\bibitem{Fh} C. Frohman, Unitary representations of knot groups.  Topology 
32 (1993), 121--144.

%\bibitem{G} W. Goldman, 
%Ergodic theory on moduli spaces.  Ann. of Math. (2)  146  (1997),  no. 3, 475--507.

\bibitem{HN} G. Harder and M. Narasimhan, 
On the cohomology groups of moduli spaces of vector bundles on curves.
Math. Ann. 212 (1974/75), 215--248. 


\bibitem{Ha} P. Hartman, On homotopic harmonic maps. Can. J. Math. 
19 (1967), 673--687.



\bibitem{HT} T. Hausel and M. Thaddeus, Generators for the cohomology ring of the moduli space of rank 2 Higgs bundles.  Proc. London Math. Soc. (3)  88  (2004),  no. 3, 632--658. 


\bibitem{H} N. Hitchin, The self-duality equations on a Riemann surface.  Proc. London Math. Soc. (3)  55  (1987),  no. 1, 59--126. 


\bibitem{Ki} F. Kirwan, Cohomology of quotients in symplectic and algebraic geometry. Mathematical Notes, 31. Princeton University Press, Princeton, NJ, 1984.

\bibitem{Ko} S. Kobayashi, Differential geometry of complex vector bundles. Publications of the Mathematical Society of Japan, 15. Kano Memorial Lectures, 5. Princeton University Press, Princeton, NJ; Iwanami Shoten, Tokyo, 1987. 


\bibitem{Lo} E. Looijenga, Prym representations of mapping class groups.  Geom. Dedicata  64  (1997),  no. 1, 69--83.

\bibitem{LM} A. Lubotzky and A. Magid, Varieties of representations of finitely generated groups.  Mem. Amer. Math. Soc.  58  (1985),  no. 336.

\bibitem{NR} M. Narasimhan and S. Ramanan, Existence of universal connections.  Amer. J. Math.  83  1961 563--572.


\bibitem{NS} M.S. Narasimhan and C. Seshadri, Stable and unitary vector bundles on a compact Riemann
surface. Ann.\ of Math.\  82 (1965), 540-567. 


%\bibitem{PX} D. Pickrell and E. Xia,
%Ergodicity of mapping class group actions on representation varieties. I.
% Closed surfaces.  Comment. Math. Helv.  77  (2002),  no. 2, 339--362.

\bibitem{R} J. R\aa de, On the Yang-Mills heat equation in two 
and three dimensions. J.\ Reine.\ Angew.\
Math.\  431 (1992), 123-163.


\bibitem{Si}  C. Simpson, Constructing variations of Hodge structure using 
Yang-Mills theory and applications to uniformization.  J.\ Amer.\ Math.\ Soc.\  1 (1988), 867-918.


\bibitem{Wi} G. Wilkin,  Morse theory for the space of Higgs bundles, 
Comm. Anal. Geom.  16 (2008), 283--332.

\end{thebibliography}
\end{document}